\newtheorem{thm}{Theorem}[section]
\newtheorem{lem}[thm]{Lemma}
\newtheorem{prp}[thm]{Proposition}
\newtheorem{cor}[thm]{Corollary}
\newtheoremstyle{roman} 
    {8.0pt plus 2.0pt minus 4.0pt}                    
    {8.0pt plus 2.0pt minus 4.0pt}                    
    {\normalfont}                
    {}                           
    {\bfseries}                  
    {.}                          
    {5pt plus 1pt minus 1pt}     
    {}  
\theoremstyle{roman}
\newtheorem{example}[thm]{Example}
\newtheorem{remark}[thm]{Remark}
\theoremstyle{plain}
\newcommand{\rem}[1]{}
\newcommand{\N}{\mathbb{N}}
\newcommand{\Q}{\mathbb{Q}}
\newcommand{\R}{\mathbb{R}}
\newcommand{\Z}{\mathbb{Z}}
\newcommand{\calP}{{\mathcal{P}}}
\newcommand{\catC}{{\mathscr{C}}}
\newcommand{\catZ}{{\mathscr{Z}}}
\newcommand{\veps}{\varepsilon}
\newcommand{\idealof}{\unlhd} 
\newcommand{\suchthat}{\,:\,}
\newcommand{\where}{\,|\,}
\newcommand{\quo}[1]{\overline{#1}}
\DeclareMathOperator{\Cent}{Cent} %
\DeclareMathOperator{\Char}{char} %
\DeclareMathOperator{\End}{End} %
\DeclareMathOperator{\Hom}{Hom} %
\DeclareMathOperator{\id}{id} %
\DeclareMathOperator{\im}{im} %
\DeclareMathOperator{\Jac}{Jac} %
\newcommand{\op}{\mathrm{op}} %
\DeclareMathOperator{\Span}{span} %
\DeclareMathOperator{\Tr}{Tr} %
\DeclareMathOperator{\Trd}{Trd} %
\newcommand{\nGL}[2]{\mathrm{GL}_{#2}({#1})}
\newcommand{\nMat}[2]{\mathrm{M}_{#2}(#1)}
\newcommand{\trans}{{\mathrm{t}}}
\newcommand{\units}[1]{{#1^\times}}
\newcommand{\uddots}{\reflectbox{$\ddots$}}
\DeclareMathOperator{\sgn}{sgn}
\newcommand{\tAr}[2]{\mathrm{A\tilde{r}}_{#2}({#1})}
\newcommand{\Sys}{{\mathrm{Sys}}}
\newcommand{\Herm}[2][]{\mathrm{UH}^{#1}({#2})}
\newcommand{\Cad}{{\mathrm{Cl}}}
\newtheorem{introthm}{Theorem}
\newtheorem{introcor}[introthm]{Corollary}
\newtheorem{introque}[introthm]{Question}
\title{Pfister's Local--Global Principle and
Systems of Quadratic Forms}
\author{Uriya A.\ First$^*$}
\address{$^*$University of Haifa}
\email{uriya.first@gmail.com}
\keywords{
quadratic form, 
system of quadratic forms, 
signature, 
ordered field, 
algebra with involution, 
hermitian category, 
hermitian form}
\begin{document}

\maketitle

\begin{abstract}
Let $q$ be a unimodular quadratic form over a field $K$.
Pfister's famous local--global principle asserts
that $q$ represents a torsion class in the Witt group of $K$
if and only if it has signature $0$, and that in this
case, the order of Witt class of $q$ is a power of $2$.
We give two analogues of this result to systems of quadratic forms,
the second of which applying only to nonsingular pairs.
We also prove a counterpart of   Pfister's theorem for finite-dimensional
$K$-algebras with involution, generalizing a result of Lewis and Unger.
\end{abstract}

\section*{Introduction}

Let $K$ be a field of characteristic different from $2$
and let $(V,q)$ be a unimodular (i.e.\ nondegenerate) quadratic space over $K$.
We write $n\times q$ for the quadratic form $(v_1,\dots,v_n)\mapsto \sum_i q(v_i):V^n\to K$.

Pfister's celebrated local-global principle 
(see \cite[Theorem~2.7.3]{Scharlau_1985_quadratic_and_hermitian_forms}, for instance) 
states that
there exists $n\in\N$ such that $n\times q$ is hyperbolic if and only if
the signature of $q$ (relative to all orderings of the field $K$) is $0$,
and that in this case, $n$ can be taken to be a power of $2$.
This work is concerned with analogues of this result to systems
of quadratic forms, and in particular to pairs of forms.

To that end, we say that a system of quadratic forms $\{q_i\}_{i\in I}$
on a $K$-vector space $V$ is \emph{hyperbolic} if 
$V$ is the direct sum of two $K$-subspaces on which each of
the forms $q_i$ vanishes.
This is one of several possible notions
for a ``trivial'' system of quadratic forms listed
by Pfister in   \cite[p.~133]{Pfister_1995_quadratic_forms_and_applications};
it is the most suitable for our purposes as it implies
that every form in the $K$-span of the system has signature $0$ (Proposition~\ref{PR:Pfister-for-non-unimod}). 
This definition   also appeared   in \cite[\S4]{Bayer_2014_hermitian_categories},
and if it is applied to  single non-unimodular quadratic forms, then Pfiter's local-global principle
still holds as stated, see Proposition~\ref{PR:Pfister-for-non-unimod}.

It is tempting to hope that if every quadratic form in $  \Span_K\{q_i\where i\in I\}$
has signature $0$, then $n\times \{q_i\}_{i\in I}$ is hyperbolic for some $n$.
However, as we  demonstrate  in Section~\ref{sec:examples}, this is already false for pairs of forms.
Therefore, one cannot expect Pfister's local-global principle to generalize
naively to systems of forms, and indeed,
the analogues that we shall give here will  take a more sophisticated
form.

\medskip

To phrase our  results, let $A=A(\{q_i\}_{i\in I})$ 
denote the $K$-subalgebra of $\End(V)\times\End(V)^\op$ consisting
of pairs $(\phi,\psi^\op)$ satisfying
	\[
	q_i(\psi x,y)=q_i(x,\phi y)
	\]
for all $x,y\in V$ and $i\in I$, and let $\sigma:A\to A$ denote
the involution given by
\[
	(\phi,\psi^\op)^\sigma=(\psi,\phi^\op).
\]
This construction had been utilized by many authors, e.g.,
Bayer-Fluckiger  
\cite[\S1.1]{Bayer_1987_Hasse_principle_for_systems_of_forms}
and Wilson \cite[\S4.3]{Wilson_2009_decomposing_p_groups}.
Following 
the latter source, we call $A$ the 
\emph{$K$-algebra of adjoints} of $\{q_i\}_{i\in I}$ 
and $\sigma$   its \emph{canonical involution}.

Denote the \emph{involution-trace} quadratic form
$x\mapsto \Tr_{A/K}(x^\sigma x):A\to K$ by $q_{A,\sigma}$. Our first main results is:

\begin{introthm}
	\label{TH:main-I}
	In the previous notation, the following conditions are equivalent:
	\begin{enumerate}[label=(\alph*)]
			\item $n\times \{q_i\}_{i\in I}$ is hyperbolic for some $n\in\N$.
			\item $\sgn q_{A,\sigma}=0$.
	\end{enumerate}
	When these conditions hold, the minimal   $n$ for which
	(a) holds is a power of $2$.
\end{introthm}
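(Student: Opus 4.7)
The plan is to translate Theorem~A into an equivalent statement about $\sigma$-hermitian forms over $(A,\sigma)$ via the hermitian-categories dictionary, invoke a Pfister-type local--global principle in that hermitian setting, and then identify the resulting signature-zero condition with $\sgn q_{A,\sigma}=0$ using the fact that $V$ is a faithful $A$-module.

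First I would set up a correspondence between systems $\{q_i\}_{i\in I}$ on $V$ whose algebra of adjoints is $A$ and $\sigma$-hermitian forms on $V$, viewed as an $A$-module via the first projection $A\hookrightarrow \End(V)$. The defining identity $q_i(\psi x,y)=q_i(x,\phi y)$ is precisely the $\sigma$-sesquilinearity condition, so the $q_i$ together determine, and are determined by, a single hermitian object $h$ over $(A,\sigma)$ (with values in a suitable $(A,\sigma)$-bimodule of forms). Under this dictionary, $n$-fold orthogonal sum of systems matches $n$-fold orthogonal sum of hermitian forms. Crucially, a Lagrangian decomposition $V=L_1\oplus L_2$ on which every $q_i$ vanishes corresponds to a Lagrangian decomposition for $h$, since the $L_j$---being characterized as maximal subspaces on which all the $q_i$ vanish---are automatically $A$-invariant by the very definition of $A$. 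Hence $n\times\{q_i\}_{i\in I}$ is hyperbolic if and only if $n\times h$ is hyperbolic.

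Next I would invoke a Pfister-type theorem for hermitian forms over the finite-dimensional $K$-algebra with involution $(A,\sigma)$, presumably proved earlier in the paper as the Lewis--Unger-type counterpart promised by the abstract: for a nonsingular $\sigma$-hermitian form $h$, the Witt class of $h$ is torsion if and only if all its signatures (over extensions of orderings of $K$ to $(A,\sigma)$) vanish, and in that case the order is a power of $2$. Combined with Step~1, this reduces Theorem~A to the equivalence \emph{``all signatures of $h$ vanish if and only if $\sgn q_{A,\sigma}=0$''}.

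This last equivalence is where I expect the main difficulty. At each ordering $P$ of $K$, extension of scalars to the real closure $K_P$ decomposes $(A,\sigma)\otimes_K K_P$ into a product of simple $K_P$-algebras with involution. A direct computation shows that $\sgn_P q_{A,\sigma}$ is a sum of nonnegative contributions from these factors, where symplectic factors and split hyperbolic-swap factors contribute $0$ while each orthogonal or unitary factor contributes the square of the signature of its defining form. Hence $\sgn_P q_{A,\sigma}=0$ precisely when $(A,\sigma)\otimes_K K_P$ has no definite factor, in which case every hermitian form over this base-change---including $h$---has signature $0$ at $P$. Conversely, if a definite factor is present, the faithfulness of the $A$-action on $V$---built into the definition of $A=A(\{q_i\}_{i\in I})$ as a subalgebra of $\End(V)\times\End(V)^\op$---forces the corresponding isotypic component of $V$ to be nonzero, and $h$ restricted to it is definite of positive rank, yielding a nonzero signature. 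This use of faithfulness is the technical heart of the equivalence and is precisely what lets the intrinsic invariant $\sgn q_{A,\sigma}$ control the generic signature behavior of all hermitian forms on faithful $A$-modules. The power-of-$2$ assertion on the minimal $n$ is inherited directly from the Pfister-type theorem of Step~2.
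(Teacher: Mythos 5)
Your overall architecture (reduce to an algebra-level statement about $(A,\sigma)$ and then invoke a Lewis--Unger-type local--global principle) is the same as the paper's, and Steps~2 and~3 are in the right spirit---Step~2 corresponds to the paper's Theorem~\ref{TH:trace-general} and Proposition~\ref{PR:hyp-inv-vs-hyp-form}, and Step~3 is handled there via Lemma~\ref{LM:trace-description} and Lemma~\ref{LM:the-real-simple-case}. But Step~1 contains a genuine error.

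You assert that a Lagrangian decomposition $V=L_1\oplus L_2$ (each $L_j$ totally isotropic for every $q_i$) is ``automatically $A$-invariant by the very definition of $A$,'' and that the $L_j$ are ``characterized as maximal subspaces on which all the $q_i$ vanish.'' Neither claim holds. Already for a single unimodular hyperbolic form $q$ on $V$, the algebra of adjoints is all of $\End(V)$ (via the first projection), and $\End(V)$ certainly does not preserve a chosen Lagrangian; moreover a hyperbolic space has many maximal totally isotropic subspaces, so there is no canonical pair $L_1,L_2$ to speak of. Because the $L_j$ need not be $A$-submodules, the intended translation of ``$\{q_i\}_{i\in I}$ is hyperbolic'' into ``a hermitian form $h$ over $(A,\sigma)$ is hyperbolic'' does not go through as stated; and without a precise definition of what $h$ even is (you only gesture at a ``suitable $(A,\sigma)$-bimodule of forms''), Step~1 cannot be salvaged as written.

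The correct reduction is more elementary and bypasses any $A$-module structure on $V$: hyperbolicity of the system $\{q_i\}_{i\in I}$ is equivalent to the existence of an idempotent $e=(\phi,\psi^\op)\in A$ with $e+e^\sigma=1$. One direction: given $V=U\oplus W$ with each $q_i$ vanishing on $U$ and $W$, the pair $e=(\id_U\oplus 0_W,\,(0_U\oplus\id_W)^\op)$ is checked directly to lie in $A$, to be idempotent, and to satisfy $e+e^\sigma=1$. Conversely, given such an $e$, the components $\phi,\psi$ are complementary idempotents in $\End(V)$, so $V=\im\phi\oplus\im\psi$, and the adjoint relation forces $q_i(\phi x,\phi y)=q_i(\psi\phi x,y)=0$ and similarly for $\psi$. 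Note that $\im\phi$ and $\im\psi$ are \emph{not} claimed to be invariant under all of $A$; the decomposition is witnessed by a single idempotent, which is exactly what ``$(A,\sigma)$ hyperbolic'' means in the paper's sense. One then checks (a one-line computation) that the algebra of adjoints of $n\times\{q_i\}_{i\in I}$ is $n\times(A,\sigma)$, and the whole theorem reduces to Theorem~\ref{TH:trace-general}, including the power-of-two statement. You should replace Step~1 with this idempotent correspondence.
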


This is   a generalization of Pfister's local-global principle because, when $\{q_i\}_{i\in I}$
consists of a single form $q$, we have $\sgn q_{A,\sigma}=(\sgn q)^2$ 
(Proposition~\ref{PR:adj-of-single-form}(i)).

In the course of proving this result,
we
show that a finite-dimensional $K$-algebra with involution $(A,\sigma)$ admits $n\in\N$
such that  $(A,\sigma)\otimes_K (\nMat{K}{n},\trans)$ is \emph{hyperbolic}
(see Section~\ref{sec:algs-with-inv}) if and if $\sgn q_{A,\sigma}=0$
(Theorem~\ref{TH:trace-general}).
This generalizes
a theorem of  Lewis and Unger \cite[Theorem~3.2]{Lewis_2003_local_global_princ_alg_with_inv},
who   established the case where $(A,\sigma)$ is central simple over $K$.

\medskip

Our second generalization of Pfister's local-global principle
applies only to   pairs of quadratic forms, 
but is  in the spirit of the   naive statement
we disqualified above.

Letting $(V,\{q_i\}_{i\in I})$ and $(A,\sigma)$ be as before, it can happen
that adjoining a quadratic form $q:V\to K$ to 
the system $\{q_i\}_{i\in I}$ will not  change the algebra of adjoints $A$. For
example, this always the case if $q\in\Span_K \{q_i\where i\in I\}$.
We denote by $\Cad\{q_i\where i\in I\}$ the $K$-vector space
of all such forms; this construction was introduced to us
by James Wilson, who also observed that $\Cad(-)$
is a closure operator.

By virtue of Theorem~\ref{TH:main-I}, if some $q\in \Cad\{q_i\where i\in I\}$
has nonzero signature, then there cannot exist an
$n\in\N$ such that $n\times \{q_i\}_{i\in I}$ is hyperbolic. 
Our second main result asserts the converse of this statement
for \emph{nonsingular} pairs of quadratic
forms,   provided $K$ is a number field or real closed.
Here, a system  of quadratic forms $\{q_i\}_{i\in I}$
is called nonsingular 
if 
there is a $K$-field $L$ such that $\Span_L\{(q_i)_L\where i\in I\}$
contains a unimodular quadratic form over $L$ (one can take $L=K$ if $K$ is infinite). 

\begin{introthm}\label{TH:main-II}
	Suppose that $K$ is a number field or a real closed field
	and let $\{q_i\}_{i=1,2}$ be a nonsingular pair of quadratic forms
	on a $K$-vector space $V$.
	Then the following conditions are equivalent:
	\begin{enumerate}[label=(\alph*)]
			\item $n\times \{q_i\}_{i=1,2}$ is hyperbolic for some $n\in\N$.
			\item Every $q\in \Cad\{q_1,q_2\}$ has
			signature $0$.
	\end{enumerate}
\end{introthm}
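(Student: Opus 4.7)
The direction (a) $\Rightarrow$ (b) is immediate from Theorem~\ref{TH:main-I}, applied twice. For any $q \in \Cad\{q_1, q_2\}$, the enlarged system $\{q_1, q_2, q\}$ has by construction the same algebra of adjoints and the same canonical involution as $\{q_1, q_2\}$, and therefore the same involution-trace form $q_{A,\sigma}$. If (a) holds then Theorem~\ref{TH:main-I} gives $\sgn q_{A,\sigma} = 0$, and applying the same theorem to the enlarged system produces $m \in \N$ such that $m \times \{q_1, q_2, q\}$, and in particular $m \times q$, is hyperbolic. By Proposition~\ref{PR:Pfister-for-non-unimod} this forces $\sgn q = 0$.

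For (b) $\Rightarrow$ (a), Theorem~\ref{TH:main-I} reduces the task to showing $\sgn q_{A,\sigma} = 0$. Since $(A,\sigma)$ and $q_{A,\sigma}$ are defined by linear equations and therefore commute with scalar extension, Pfister's classical local-global principle over $K$ allows us to verify the vanishing signature-by-signature; passing to a real closure of $K$ at an ordering $P$, we may assume $K$ is real closed. Nonsingularity together with $|K|=\infty$ then provides $\alpha,\beta \in K$ with $\alpha q_1 + \beta q_2$ unimodular, and replacing $q_1$ by this combination preserves $(A,\sigma)$, $\Cad\{q_1,q_2\}$, and the hyperbolicity status. Henceforth assume $q_1$ unimodular, with adjoint involution $\sigma_1$ on $B := \End V$, and let $f \in B$ be the unique $\sigma_1$-symmetric element with $q_2(x,y) = q_1(x, fy)$. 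Then $(A,\sigma) \cong (\Cent_B(f), \sigma_1|_{\Cent_B(f)})$, and every form $q_1(\cdot, p(f)\cdot)$ with $p \in K[t]$ lies in $\Cad\{q_1, q_2\}$.

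Next I would decompose $V$ into its primary components as a $K[f]$-module. Self-adjointness of $f$ makes this decomposition $q_1$-orthogonal (a standard argument using that $\pi_i(f)$ is invertible on each $V_{\pi_j}$ with $j \neq i$), and the primary idempotents are $\sigma$-fixed elements of $A$, forcing every form in $\Cad\{q_1, q_2\}$ to be block-diagonal. Both (a) and (b) are therefore additive over primary summands, which reduces us to the case where the minimal polynomial of $f$ is a prime power $\pi^e$. Over $K$ real closed there are two flavors: if $\deg \pi = 2$ then a direct computation shows that every $K$-trace form of a nonzero element of $K[t]/\pi$ is hyperbolic, whence both (b) and the desired $\sgn q_{A,\sigma} = 0$ hold automatically; if $\deg \pi = 1$, then after translating $f$ by a scalar, $f$ is nilpotent.

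The nilpotent case is the main obstacle and is where I expect the real work to lie. I would invoke the classical normal form for a unimodular symmetric bilinear space equipped with a nilpotent self-adjoint operator: $(V, q_1, f)$ splits canonically into indecomposable blocks indexed by Jordan block sizes, each odd-sized block carrying an intrinsic sign $\varepsilon \in \{\pm 1\}$. A block-level analysis should then show that the vanishing of signatures throughout $\Cad\{q_1, q_2\}$---which, when $V$ fails to be $K[f]$-free, strictly contains $\{q_1(\cdot, p(f)\cdot)\}_{p \in K[t]}$ and is therefore a genuinely stronger hypothesis than the vanishing of $\sgn q_1(\cdot, p(f)\cdot)$ alone---forces the block signs to cancel in exactly the pattern required for some $n \times \{q_1, q_2\}$ to become hyperbolic. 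The computational difficulty lies in pinning down $\Cad$ on each indecomposable block; an appealing alternative is to reformulate (b)$\Rightarrow$(a) as a Pfister-type local-global principle for unimodular hermitian forms over the Artinian local $K$-algebra $K[t]/\pi^e$ with trivial involution, and to invoke the hermitian-category machinery of \cite{Bayer_2014_hermitian_categories}.
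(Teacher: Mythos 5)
The direction (a)$\Rightarrow$(b) is correct and matches the paper's reasoning.

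For (b)$\Rightarrow$(a), your overall skeleton is recognizable in the paper's argument: make $q_1$ unimodular, pass to $f=q_1^{-1}q_2$ and the centralizer description of $(A,\sigma)$, decompose along the primary/Jordan structure of $f$, dispatch the degree-$2$ factors, and reduce to counting signed nilpotent blocks. (The paper packages the decomposition via the hermitian category $\tAr{K}{2}$ and the Quebbemann--Scharlau--Schulte theorems rather than directly via $K[f]$-modules, but this is cosmetic.)

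There is, however, a serious gap at the step ``passing to a real closure of $K$ at an ordering $P$, we may assume $K$ is real closed.'' This move \emph{loses the hypothesis}. Condition (b) is a statement about $\Cad\{q_1,q_2\}$ over $K$, and the closure operator $\Cad$ does not commute with scalar extension: $\Cad\{(q_1)_{K_P},(q_2)_{K_P}\}$ may be strictly larger than $\Cad\{q_1,q_2\}\otimes_K K_P$. Knowing that every form in $\Cad_K\{q_1,q_2\}$ has total signature $0$ tells you nothing, a priori, about the extra forms appearing in $\Cad_{K_P}$. What your reduction would actually prove is Corollary~\ref{CR:main-III} --- condition (b$'$), which explicitly requires the closures over all $K_P$ to consist of signature-$0$ forms --- not Theorem~\ref{TH:main-II} itself. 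The content of Theorem~\ref{TH:main-II} beyond the corollary is precisely the bridge from the closure over $K$ to behaviour over $K_P$, and this is where the paper's condition~\ref{item:condition-on-K} enters: it lets one build polynomials $f_m\in K[X]$ (over $K$!) with prescribed sign behaviour at the roots $\alpha_1,\dots,\alpha_t\in K_P$, hence forms $z_m=q_1\, p^{k-1}(J)\,g^N(J)\,f_m(J)$ that lie in $\Cad_K\{q_1,q_2\}$, so that the hypothesis (b) supplies the equations $\sgn_P z_m=0$ needed to kill the block sign imbalance. Two smaller remarks: your observation that $\Cad$ is ``genuinely stronger'' than $\{q_1 p(f)\}_{p\in K[t]}$ turns out to be a red herring, since the paper only ever tests forms of that restricted shape; and the nilpotent block normal form you flag as ``the main obstacle'' is Lemma~\ref{LM:quad-forms-on-Z-real-case} and is comparatively routine --- the real obstacle is the construction of the test polynomials over $K$, which your proposal does not address.
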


If $q_1=q_2$, then $\Cad\{q_1,q_2\}=K q_1$ (Proposition~\ref{PR:adj-of-single-form}),
so Theorem~\ref{TH:main-II} also generalizes Pfister's local-global principle.

In fact, Theorem~\ref{TH:main-II}
holds for all fields $K$
satisfying a certain condition
(see Section~\ref{sec:pairs}),
which we believe to hold for all fields. We  conjecture
that the nonsingularity
assumption can be removed as well.

We further note that Theorem~\ref{TH:main-I} implies
that there exists $n\in\N$
such that $n\times \{q_i\}_{i\in I}$ is hyperbolic if and only if
the same statement holds after base-changing to the real closure of $K$
relative to each of its ordering. Thus, writing $K_P$
for the real closure of $K$ relative to an ordering $P$,
we have the following corollary,
which holds over any field.

\begin{introcor}\label{CR:main-III}
	Let $\{q_i\}_{i=1,2}$ be a nonsingular pair of 
	quadratic forms on a $K$-vector space $V$.
	Then the following conditions are equivalent:
	\begin{enumerate}[label=(\alph*)]
		\item $n\times \{q_i\}_{i=1,2}$ is hyperbolic for some $n\in\N$.
		\item[(b$'$)]
		For every ordering $P$ of $K$
		and every $q\in \Cad\{(q_1)_{K_P},(q_2)_{K_P}\}$,
		we have $\sgn q=0$. 
	\end{enumerate}		
\end{introcor}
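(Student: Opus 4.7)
The plan is to chain Theorem~\ref{TH:main-I} with Theorem~\ref{TH:main-II}, using the real closures $K_P$ as a bridge. Applying Theorem~\ref{TH:main-I} over $K$, condition (a) is equivalent to $\sgn q_{A,\sigma}=0$, where $(A,\sigma)$ is the algebra of adjoints of $\{q_1,q_2\}$ with its canonical involution. Since the total signature of a quadratic form vanishes iff each of its $P$-signatures vanishes, and since the $P$-signature over $K$ agrees with the signature over the real closed field $K_P$ after base change, this is equivalent to $\sgn(q_{A,\sigma}\otimes_K K_P)=0$ for every ordering $P$ of $K$.

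The next step is a base-change compatibility check: the formation of the algebra of adjoints and its canonical involution commutes with extension of scalars, so $(A,\sigma)\otimes_K K_P$ is the algebra of adjoints with canonical involution of $\{(q_1)_{K_P},(q_2)_{K_P}\}$, and the involution-trace form pulls back accordingly. A second application of Theorem~\ref{TH:main-I}, this time over $K_P$, then converts the vanishing of $\sgn q_{A_{K_P},\sigma_{K_P}}$ into the existence of $n\in\N$ with $n\times\{(q_1)_{K_P},(q_2)_{K_P}\}$ hyperbolic.

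To finish, I would invoke Theorem~\ref{TH:main-II} over each real closed $K_P$. This is legitimate provided the base-changed pair is nonsingular over $K_P$, which follows from the definition: any field $L/K$ witnessing nonsingularity of $\{q_1,q_2\}$ can be amalgamated with $K_P$ over $K$ (via a maximal ideal of $L\otimes_K K_P$), and that amalgam serves as a witness for the base-changed pair. Theorem~\ref{TH:main-II} then equates the hyperbolicity of $n\times\{(q_1)_{K_P},(q_2)_{K_P}\}$ for some $n$ with the vanishing of $\sgn q$ for all $q\in\Cad\{(q_1)_{K_P},(q_2)_{K_P}\}$, which is exactly (b$'$).

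The main obstacle, such as it is, is the bookkeeping behind the base-change compatibilities: that $A(\{(q_i)_{K_P}\})=A(\{q_i\})\otimes_K K_P$ as algebras with involution, that the involution-trace form behaves well under this identification, and that the $P$-signature over $K$ coincides with the signature over $K_P$. These are all routine once the definitions are unwrapped, and they are precisely the glue that allows Theorem~\ref{TH:main-I} to be applied twice on either side of the descent to $K_P$.
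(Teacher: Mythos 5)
Your proof is correct and follows the same route the paper sketches: apply Theorem~\ref{TH:main-I} over $K$ to convert (a) into the vanishing of $\sgn q_{A,\sigma}$; observe that the total signature vanishes iff each $P$-signature does, and that the latter equals the signature of $q_{A_{K_P},\sigma_{K_P}}$ since adjoint algebras, canonical involutions and involution-trace forms commute with scalar extension; apply Theorem~\ref{TH:main-I} again over each $K_P$ to go back to a hyperbolicity statement there; and finish with Theorem~\ref{TH:main-II} over the real closed field $K_P$, which satisfies condition~\ref{item:condition-on-K}. The only point worth noting is that your amalgamation argument for transferring nonsingularity to $K_P$, while valid, is more elaborate than necessary: once $K$ has an ordering it is infinite, so Lemma~\ref{LM:nonsingular-systems} already gives a unimodular form in $\Span_K\{q_1,q_2\}$, which remains unimodular after base change to $K_P$; and when $K$ has no ordering at all, condition (b$'$) is vacuous and $\sgn q_{A,\sigma}=0$ trivially, so (a) holds by Theorem~\ref{TH:main-I}.
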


Finally, we ask:

\begin{introque}\label{QE:main}
	Do Theorem~\ref{TH:main-II} and
	Corollary~\ref{CR:main-III} apply to systems
	consisting of more than $2$ quadratic forms?
\end{introque}

The paper is organized as follows:
Section \ref{sec:preliminaries} is preliminary and recalls relevant
definitions and facts.
In 
Section~\ref{sec:examples}, we give  nontrivial examples of vector spaces
of quadratic forms consisting of forms with signature $0$, and demonstrate
our main results on them.
Section~\ref{sec:algs-with-inv} concerns with generalizing
Lewis and Unger's theorem stated above. This is used in
Section~\ref{sec:systems-main} to prove Theorem~\ref{TH:main-I}.
The remaining two sections concern with proving Theorem~\ref{TH:main-II}:
Section~\ref{sec:herm-cat} recalls necessary facts about hermitian categories,
and the proof itself is given in Section~\ref{sec:pairs}.

\medskip

We are grateful to Eva Bayer-Fluckiger and David Leep
for several useful conversations. We
also thank  James Wilson for introducing to us
the notion of closure of sets of quadratic forms used in 
Theorem~\ref{TH:main-II}.

\section{Preliminaries}
\label{sec:preliminaries}

Throughout this paper, $K$ denotes a field of characteristic not $2$.
All $K$-vector spaces and $K$-algebras are assumed
to be finite-dimensional.

We refer the reader to \cite{Scharlau_1985_quadratic_and_hermitian_forms} for
necessary definitions concerning quadratic, bilinear   and hermitian forms.

Our assumptions on the characteristic of $K$
allows us no to   distinguish between quadratic and bilinear forms,
and we will use
the same letter to  denote  a quadratic form $q:V\to K$ and its associated
bilinear form 
$(x,y)\mapsto \frac{1}{2}(q(x+y)-q(x)-q(y)):V\times V\to K$.
As usual, 
given $\alpha_1,\dots,\alpha_n\in K$,
the diagonal quadratic $(x_1,\dots,x_n)\mapsto \sum_i \alpha_ix_i^2:K^n\to K$
is denoted $\langle \alpha_1,\dots,\alpha_n\rangle$.
We do not require quadratic forms to be unimodular
(i.e.\ nondegenerate).

If $L$ is a $K$-field, $U$ and $V$ are   $K$-vector spaces
and $\phi\in\Hom_K(U,V)$, then we write $U_L=U\otimes_KL$
and $\phi_L=\phi\otimes_K\id_L:U_L\to V_L$.
Similar notation will be applied to algebras, quadratic forms,
involutions, etcetera.

\medskip

Recall that an ordering $P$ of $K$ is a subset of $\units{K}:=K-\{0\}$
such that $P+P\subseteq P$, $P\cdot P\subseteq P$,
$\units{K}=P\cup -P$ and $P\cap -P=\emptyset $. In this case, given $\alpha,\beta \in K$, we write
$\alpha<_P \beta$ if $\beta-\alpha\in P$,
and set 
\[\sgn_P (\alpha)=\left\{\begin{array}{rl}
1 & \alpha\in P \\
0 & \alpha=0 \\
-1 & \alpha\in -P
\end{array}\right. . \] 
If $P$ is clear form the context, we shall
suppress it and write $\alpha<\beta$, resp.\ $\sgn(\alpha)$. 
The real closure of $K$ relative to $P$ is denoted $K_P$.

Let $P$ be an ordering of $K$ and let $(V,q)$  be a quadratic space over $K$.
Recall that $q$ is called positive (resp.\ negative) definite relative
to $P$
if $q(v)>0$ (resp.\ $q(v)<0$) for all $v\in V-\{0\}$.
The $P$-signature of $q$, denoted $\sgn_P q$,
is largest possible dimension of a subspace on which $q$
is positive definite minus the largest possible
dimension of a subspace on which $q$ is negative definite.
Note that this definition also makes sense
for  non-unimodular forms, and we have
\[
\sgn_P\langle \alpha_1,\dots,\alpha_n\rangle=\sum_i\sgn_P(\alpha_i).
\]
Let $\Theta$ denote the set of all orderings of $K$.
The \emph{total signature} of $q$, denoted
$\sgn q $, is the function $\Theta\to \Z$
mapping $P$ to $\sgn_P(q)$.

\medskip

Let $(V,q)$ be quadratic space.
Recall from the introduction
that $q$ is called \emph{hyperbolic}
if there exist subspaces $U,U'\subseteq V$
such that $V=U\oplus U'$ and $q$ vanishes on $U$
and $U'$. This agrees with the usual definition
of hyperbolic quadratic forms when $q$ is unimodular.
The following proposition summarizes some properties
of hyperbolic quadratic forms in the non-unimodular case.
Notably, such forms have signature $0$.

	\begin{prp}\label{PR:Pfister-for-non-unimod}
		Let $(V,q)$ be a quadratic space, possibly non-unimodular.  Then:
		\begin{enumerate}[label=(\roman*)]
			\item   $q$ is hyperbolic 
			if and only if $q\cong q'\oplus \langle 0,\dots,0\rangle$
			for some hyperbolic unimodular quadratic form $q'$.
			\item Pfister's local-global principle holds for $(V,q)$: There
			exists $n\in\N$ such that $n\times q$ is hyperbolic
			if and only if $\sgn q=0$, and in this case,
			the smallest such $n$ is a power of $2$.
		\end{enumerate}
	\end{prp}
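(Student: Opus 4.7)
The plan is to decompose $q$ into its radical part and a unimodular part, prove (i) by a projection argument, and then reduce (ii) to the classical Pfister local--global principle for unimodular forms.

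\medskip

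Let $R := \rad(q) = \{v\in V \suchthat q(v,-)=0\}$ and pick a complementary subspace $W\subseteq V$, so $V=R\oplus W$. Since $\Char K \neq 2$, we have $q|_R=0$ and the orthogonal decomposition $q=q'\oplus \langle 0,\dots,0\rangle$ holds, where $q':=q|_W$ is unimodular. In particular, for any ordering $P$, the $P$-signature is unchanged: any subspace of $V$ on which $q$ is positive (resp.\ negative) definite meets $R$ trivially and projects injectively into $W$, whence $\sgn_P q = \sgn_P q'$. The same decomposition applies to $n\times q$: we have $n\times q \cong (n\times q')\oplus \langle 0,\dots,0\rangle$.

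\medskip

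For part (i), the ``if'' direction is straightforward: if $q'$ is hyperbolic with $W=W_1\oplus W_2$ and $q'|_{W_i}=0$, then $V=(W_1\oplus R)\oplus W_2$ realizes $q$ as hyperbolic (a direct computation using $R\subseteq \rad(q)$ shows $q$ vanishes on $W_1\oplus R$, and the dimensions and intersections work out because $R\cap W=0$). For the converse, suppose $V=U\oplus U'$ with $q|_U=q|_{U'}=0$. Let $\pi:V\to W$ be the projection along $R$. For $u_1,u_2\in U$, expanding $q(\pi u_1,\pi u_2)=q(u_1-r_1,u_2-r_2)$ and using $r_1,r_2\in \rad(q)$ together with $q(u_1,u_2)=0$, we see $\pi(U)$ is totally isotropic in $(W,q')$; similarly for $\pi(U')$. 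Since $q'$ is unimodular we have $\dim \pi(U),\dim \pi(U')\le \tfrac12\dim W$, while $\pi(U)+\pi(U')=\pi(V)=W$ forces equality and $\pi(U)\oplus \pi(U')=W$, showing $q'$ is hyperbolic.

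\medskip

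Part (ii) then follows formally. By part (i) together with the decomposition above, $n\times q$ is hyperbolic if and only if $n\times q'$ is hyperbolic, so the smallest valid $n$ is the same for $q$ and $q'$. Since $\sgn q=\sgn q'$ and $q'$ is unimodular, the classical Pfister local--global principle (e.g.\ \cite[Theorem~2.7.3]{Scharlau_1985_quadratic_and_hermitian_forms}) gives existence of such an $n$ exactly when $\sgn q=0$, and that the smallest one is a power of $2$.

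\medskip

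The only nonroutine point is the converse in (i); the key observation that makes it go through is that totally isotropic subspaces of $V$ map to totally isotropic subspaces of the unimodular part $(W,q')$ under the projection $\pi$, which brings the dimension bound for unimodular forms into play. Everything else is bookkeeping.
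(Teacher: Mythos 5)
Your proof is correct and takes essentially the same approach as the paper's: split off the radical $R$, observe that the totally isotropic summands of $V$ pass to totally isotropic subspaces of the unimodular part (you work in a complement $W$ of $R$, the paper works in $V/R$, which is the same thing), and then reduce part (ii) to the classical unimodular case. The only cosmetic difference is that you make the dimension-counting step at the end of (i) explicit, which the paper leaves to the reader.
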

	
	\begin{proof}
		(i) The ``if'' part is clear.
		For the ``only if'' part, write $V=U\oplus W$ so
		that $q$ vanishes on both $U$ and $W$.
		Let $R$ denote the radical of $q$, let $\quo{V}=V/R$
		and let $q':\quo{V}\to K$
		denote the quadratic form given by $q'(x+R)=q(x)$
		(this is well-defined because $R=V^\perp$).
		Then $q'$ is unimodular and vanishes
		on $\quo{U}:=(R+U)/R$ and $\quo{W}:=(R+W)/R$, hence $q'$ is hyperbolic.
		Since $q\cong q'\oplus \langle 0,\dots,0\rangle$, we are done.

		(ii) This follows from (i) and Pfister's local-global principle. 
	\end{proof}

	Let $V$ be a $K$-vector space and let $I$ be a set.
	By an $I$-indexed system of quadratic forms on $V$
	we mean a collection $\{q_i\}_{i\in I}$ consisting of
	quadratic forms on $V$. We also say that $(V,\{q_i\}_{i\in I})$
	is a vector space with a system of quadratic forms. 
	Given $n\in \N$, we write $n\times\{q_i\}_{i\in I}=\{n\times q_i\}_{i\in I}$,
	which is  an $I$-indexed system of forms on $V^n$.
	Recall that $\{q_i\}_{i\in I}$ is called
	\emph{nonsingular} if there is a $K$-field
	$L$ such that $\Span_L\{(q_i)_L\where i\in I\}$ contains a unimodular form.
	
	\begin{lem}\label{LM:nonsingular-systems}
		Let $ \{q_i\}_{i\in I} $ be a nonsingular system of quadratic
		forms on a $K$-vector space $V$.
		If $K$ is infinite, then $\Span_K\{q_i\where i\in I\}$
		contains a unimodular form.
	\end{lem}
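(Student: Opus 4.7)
The plan is to recognize that ``being unimodular'' is a Zariski-open condition on the finite-dimensional vector space $W := \Span_K\{q_i \where i \in I\}$, cut out by the non-vanishing of the discriminant. The nonsingularity hypothesis exhibits an $L$-point in this open locus, hence the corresponding polynomial is not identically zero over $K$, and since $K$ is infinite we can find a $K$-point.

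More concretely, first I would observe that since $V$ is finite-dimensional, the space of all quadratic forms on $V$ is finite-dimensional, hence $W$ is a finite-dimensional $K$-subspace. Pick a finite $K$-basis $q_{i_1}, \dots, q_{i_m}$ of $W$ extracted from $\{q_i\}_{i \in I}$. After fixing a $K$-basis of $V$, each $q_{i_j}$ corresponds to a symmetric matrix $M_j$, and the generic element $\sum_j t_j q_{i_j}$ of $W$ has Gram matrix $\sum_j t_j M_j$. Let
\[
    P(t_1, \dots, t_m) := \det\!\Bigl(\sum_{j=1}^{m} t_j M_j\Bigr) \in K[t_1, \dots, t_m];
\]
then a form $\sum_j t_j q_{i_j}$ in $W$ is unimodular precisely when $P(t_1, \dots, t_m) \neq 0$.

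Next I would use the nonsingularity hypothesis. Since base change commutes with taking finite-dimensional spans, one has $W \otimes_K L = \Span_L\{(q_i)_L \where i \in I\}$, and this space is still spanned over $L$ by the $(q_{i_j})_L$. By assumption there exist $s_1, \dots, s_m \in L$ with $\sum_j s_j (q_{i_j})_L$ unimodular over $L$, i.e.\ $P(s_1, \dots, s_m) \neq 0$. Hence $P$ is not the zero polynomial in $K[t_1, \dots, t_m]$.

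Finally, since $K$ is infinite, a nonzero polynomial in $K[t_1, \dots, t_m]$ does not vanish identically on $K^m$, so we can choose $t_1, \dots, t_m \in K$ with $P(t_1, \dots, t_m) \neq 0$. The corresponding form $\sum_j t_j q_{i_j}$ then lies in $W = \Span_K\{q_i \where i \in I\}$ and is unimodular, completing the proof. There is no real obstacle here; the only thing to check carefully is the identification $W \otimes_K L = \Span_L\{(q_i)_L\}$, which is immediate once $W$ is seen to be finite-dimensional.
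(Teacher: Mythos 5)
Your proof is correct and follows essentially the same route as the paper's: fix a basis of $V$, express the determinant of a generic linear combination of the $q_i$'s as a polynomial over $K$, use the nonsingularity hypothesis to show this polynomial has a nonvanishing point over $L$ and is therefore nonzero, and then use that $K$ is infinite to find a $K$-point. The only cosmetic difference is that the paper extracts the finite list $q_1,\dots,q_t$ directly from the witnessing $L$-linear combination, whereas you first pass to a $K$-basis of $W = \Span_K\{q_i\}$ and then invoke $W\otimes_K L = \Span_L\{(q_i)_L\}$; both are valid and lead to the same polynomial argument.
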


	\begin{proof}
		Let $L$ be a field such that $\Span_L\{(q_i)_L\where i\in I\}$ contains a unimodular form.
		Then there exist $t\in \N$, $q_1,\dots,q_t\in \{q_i\where i\in I\}$
		and $\alpha_1,\dots,\alpha_t\in L$
		such that $ \sum_{i=1}^t\alpha_i (q_i)_L $
		is a unimodular quadratic form over $L$. 
		Let    $B$ be a basis to $V$
		and let $f \in K[x_1,\dots,x_t]$
		denote the determinant of    $\sum_i x_iq_i$ relative to $B$.
		Then $f(\alpha_1,\dots,\alpha_t)\neq 0$,
		hence $f\neq 0$. Since $K$ is infinite, there are $\beta_1,\dots,\beta_t\in K$
		such that $f(\beta_1,\dots,\beta_t)\neq 0$.
		Then $\sum_i\beta_iq_i\in \Span_K\{q_i\where i\in I\}$ is unimodular.
	\end{proof}

\section{Examples}
\label{sec:examples}

	Before setting to prove Theorems~\ref{TH:main-I}
	and~\ref{TH:main-II}, 
	we first exhibit nontrivial examples of
	systems of quadratic forms with $K$-span consisting
	of signature-$0$ forms.
	In particular, we shall see that the dimension of the $K$-span
	can be arbitrary large, even when the system
	is not hyperbolic, and that such systems $\{q_i\}_{i\in I}$ 
	may fail to admit $n\in\N$ such that $n\times\{q_i\}_{i\in I}$
	is hyperbolic.

	For the sake of brevity, quadratic forms on $K^n$
	will be given simply as their Gram matrix relative to the standard basis.
	In this setting, the ring of adjoints of a system of quadratic forms $\{q_i\}_{i\in I}$
	on $K^n$ is the collection of pairs $(\phi,\psi^\op)\in \End(K^n)\times \End(K^n)^\op=\nMat{K}{n}\times\nMat{K}{n}^\op$
	satisfying	
	\[
	\psi^\trans q_i=q_i\phi  \qquad \forall 
	i\in I.
	\]
	Moreover, a quadratic form $q:K^n\to K$ (viewed as a symmetric matrix)
	lies in $\Cad\{q_i\where i\in I\}$ if and only if
	\[
	\psi^\trans q =q\phi \qquad\forall(\phi,\psi^\op)\in A(\{q_i\}_{i\in I}).
	\]

	\begin{example}\label{EX:good}
		Let $\{q_i\}_{i=1}^{n^2+1}$ be
		a basis to  the space of quadratic forms on $K^{2n}$ 
		taking the form
	\[
	\left[
	\begin{matrix}
	\alpha I_n & a \\
	a^\trans & -\alpha I_n
	\end{matrix}
	\right],
	\]
	where $I_n$ denotes the $n\times n$ identity
	matrix, $a\in \nMat{K}{n}$ and $\alpha\in K$.
	To see that every form in $Q:=\Span_K\{q_i\}_{i=1}^{n^2+1}$
	has signature $0$, let $P$ be an ordering of $K$
	and consider
	$U=K^n\times \{0\}^n$ and $W=\{0\}^n\times K^n$.
	Then for every $q\in Q$, exactly one of the following holds:
	\begin{itemize}
		\item $q|_{U }$ is positive definite and $q|_{W}$ is negative definite relative to $P$;
		\item $q|_{U }$ is negative definite and $q|_{W}$ is positive definite relative to $P$;
		\item $q$ vanishes on both $U$ and $W$.
	\end{itemize} 
	Each of these   possibilities implies $\mathrm{sgn}_P q=0$, so $Q$ is an $(n^2+1)$-dimensional
	space consisting of $0$-signature forms.

	The system $\{q_i\}_{i=1}^{n^2+1}$ is not hyperbolic because there
	is no nonzero vector which is annihilated by
	all forms in the system.
	However, $2\times\{q_i\}_{i=1}^{n^2+1}$   is hyperbolic. Indeed,   
	$2\times q_i$ vanishes
	on
	\begin{align*}
	V_1&:=\{(u,v,-v,u)\where u,v\in K^n\}
	\qquad
	\text{and}
	\qquad
	V_2&:=\{(u,v,v,-u)\where u,v\in K^n\}
	\end{align*}
	for all $i$,
	and $K^{4n}=V_1\oplus V_2$
	(here we view $K^{4n}$ as
	$K^n\times K^n\times K^n\times K^n$).
	\end{example}

		\begin{example}
		Take $K=\R$ and
		let $\{q_1,q_2\}$ be 
		a basis to the space $Q$ of quadratic forms on $K^4$
		of the form
		\[
		\begin{bmatrix}
		-\alpha & & & \\
		& & & \alpha \\
		& & \alpha & \beta \\
		& \alpha & \beta &  
		\end{bmatrix}.
		\]
		It is easy to see that every $q\in Q$ is hyperbolic,
		and thus has  signature $0$. However, there is no $n\in \N$
		such that $n\times \{q_i\}_{i=1,2}$ is hyperbolic.
		Indeed,  straightforward
		computation   shows that $A=A(\{q_1,q_2\})$ is the $K$-subalgebra
		of $\nMat{K}{4}\times\nMat{K}{4}^\op$ consisting of pairs of the form
		\[
		\left(
		\left[
		\begin{matrix}
		x & & & y \\
		z & \alpha & \beta & \gamma \\
		& & \alpha & \beta \\
		& & & \alpha
		\end{matrix}
		\right],
		\left[
		\begin{matrix}
		x &   & & -z  \\
		-y & \alpha & \beta & \gamma \\
		& & \alpha & \beta \\
		& & & \alpha
		\end{matrix}
		\right]^\op
		\right).
		\]
		This in turn implies
		that   $\Cad \{q_1,q_2\} $ 
		is
		the $3$-dimensional space of quadratic forms 
		of the form		
		\[
		\begin{bmatrix}
		-\alpha & & & \\
		& & & \alpha \\
		& & \alpha & \beta \\
		& \alpha & \beta & \gamma  
		\end{bmatrix},
		\]
		and this space  contains forms of nonzero signature
		(take $\alpha=\beta=0$ and $\gamma\neq 0$).
		
		Alternatively, one can check   that $\sgn q_{A,\sigma}=2$,
		and reach the same conclusion using Theorem~\ref{TH:main-I}.
	\end{example}

	\begin{example}
		Let $K=\Q$ and let $\{q_1,q_2\}$
		be a basis to 
		the $2$-dimensional $K$-vector space $Q$ consisting of quadratic forms on $K^4$
		of the form
		\[
		\begin{bmatrix}
		 & & \alpha & \beta \\
		& & 2\beta & \alpha \\
		\alpha & 2\beta  & \beta &  \\
		 \beta & \alpha &  
		\end{bmatrix}.
		\]
		Then every $q\in Q$ is either
		the zero form or a hyperbolic unimodular quadratic form,
		hence $Q$ consists entirely of forms with signature $0$.
		However,  there exist forms in $Q\otimes_K\R$ of nonzero signature (e.g.\ take $\alpha=2$ and $\beta=\sqrt{2}$),
		so   there is no $n\in \N$ such that $n\times \{q_i\}_{i=1,2}$ is hyperbolic.
		Theorem~\ref{TH:main-II} guarantees that we can also find
		$q\in \Cad \{q_1,q_2\} $ with nonzero signature,
		and indeed, one can check
		that the diagonal form
		$\langle 0,0,2,1\rangle$ is such an example.
	\end{example}

	We finish with a general method
	for  producing high-dimensional vector spaces of quadratic forms consisting
	of forms with signature $0$. Small-scale
	experiments suggest that applying
	it with a ``generic''
	choice of parameters 
	will
	result in a system having 
	a form of nonzero signature in its closure.

\begin{example}\label{EX:cnt}
	Let $n\in\N$ and let $S$ and $E$ be $K$-subspaces of $\nMat{K}{n}$
	such that $S$ consists of symmetric matrices representing quadratic forms of
	signature $0$, and any nonzero matrix in $E$ is invertible.
	Let
	\[
	Q=Q(S,E):=
	\left\{
	\left[
	\begin{matrix}
	0 & e \\
	e^{\trans} & s
	\end{matrix}
	\right]
	\,\big|\,
	e\in E,s\in S\right\}
	\subseteq\nMat{K}{2n}.
	\]
	We claim that the signature of any quadratic form in $Q$
	is $0$. Indeed, if $q=[\begin{smallmatrix}
	0 & e \\
	e^{\trans} & s
	\end{smallmatrix}]\in Q$, 
	then $e$ is either invertible or $0$. In the first case,
	$q$ is unimodular of dimension $2n$ and admits a totally isotropic subspace
	of dimension $n$, so  it is hyperbolic and has signature $0$,
	whereas
	in the second case, $\mathrm{sgn}(q)=\mathrm{sgn}(s)=0$.

	In the case $K=\R$, the largest possible dimension of $E$
	was determined by Adams \cite{Adams_1962_vector_fields_spheres} and equals 
	to 
	the Hurwitz--Radon number $\rho(n)$ given
	by $8a+2^b$ if $n=2^{4a+b}c$ with  $0\leq a$, $0\leq b\leq 3$ and   $c$ odd.
\end{example}

\section{Algebras with Involution and Involution-Trace Forms}
\label{sec:algs-with-inv}

	By a $K$-algebra with involution we mean a pair $(A,\sigma)$
	such that $A$ is a $K$-algebra and $\sigma$ is a $K$-involution.
	A $K$-algebra with involution  is  \emph{simple}  
	if it has no nonzero proper ideals stable under its involution.
	In this case,
	$\Cent(A)^{\{\sigma\}}:=\{a\in \Cent(A)\suchthat a^\sigma=a\}$ is a field.
	Recall that the involution-trace form of a $K$-algebra with involution $(A,\sigma)$
	is the quadratic form 
	$q_{A,\sigma}:A\to K$
	given by 
	\[q_{A,\sigma}(x)=\Tr_{A/K}(x^\sigma x).\] 
	It is not
	unimodular in general.
	
	Following \cite{Knus_1998_book_of_involutions},
	we say that $(A,\sigma)$ is a central simple $K$-algebra with involution
	if $(A,\sigma)$ is simple and $\Cent(A)^{\{\sigma\}}=K$.
	We  alert the reader that in this case, it is  common to
	define the involution trace form of $(A,\sigma)$  using the \emph{reduced} trace
	$\Trd_{A/\Cent(A)}$ instead of the trace; 
	see \cite{Lewis_1993_signature_of_involution} and \cite{Queguiner_1995_signature_of_involution}.
	
	For every $n\in\N$,  write $n\times (A,\sigma)=(\nMat{A}{n},n\times \sigma)$,
	where $n\times \sigma$ is
	the   involution 
	$(a_{ij})\mapsto (a_{ji}^\sigma):\nMat{A}{n}\to \nMat{A}{n}$.
	Then   $n\times (A,\sigma)\cong (A\otimes\sigma)\otimes_K (\nMat{K}{n},\trans)$,
	where $\trans$ denotes the matrix transpose,
	and $q_{n\times (A,\sigma)}\cong n^2\times q_{A,\sigma}$.

\medskip

	We say that    $(A,\sigma)$, or just $\sigma$,
	is   \emph{hyperbolic} if there exists
	an idempotent $e\in A$ such that $e^\sigma+e=1$. 
	The relation to hyperbolic hermitian forms 
	is expressed in the following proposition.
	
	\begin{prp}\label{PR:hyp-inv-vs-hyp-form}
		Let $(A,\sigma)$ be a $K$-algebra
		with involution, let $n\in\N$
		and  let $f_n:A^n\times A^n\to A$
		denote the $1$-hermitian form
		over $(A,\sigma)$ given by $f_n((x_i)_{i=1}^n,(y_i)_{i=1}^n)=\sum_i x_i^\sigma y_i$.
		Then $n\times  \sigma  $ is hyperbolic if and only if $f_n$
		is hyperbolic.
	\end{prp}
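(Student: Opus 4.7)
The plan is to realize the involution $n\times\sigma$ on $\nMat{A}{n}=\nEnd[A]{A^n}$ as the adjoint involution of the standard $1$-hermitian form $f_n$, and then to translate the idempotent condition defining hyperbolicity of the involution into a totally isotropic direct-sum decomposition witnessing hyperbolicity of $f_n$.

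More precisely, first I would verify that, viewing elements of $A^n$ as column vectors on which $\nMat{A}{n}$ acts on the left, one has the adjoint identity $f_n(Mx,y)=f_n(x,M^{\tau}y)$ for all $M\in\nMat{A}{n}$ and $x,y\in A^n$, where $\tau=n\times\sigma$. This is a direct componentwise calculation using $f_n(x,y)=\sum_i x_i^\sigma y_i$ and the formula $(M^{\tau})_{ij}=M_{ji}^\sigma$. I would also note that $f_n$ is unimodular, with Gram matrix the identity.

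For the forward direction, suppose $e\in\nMat{A}{n}$ is an idempotent with $e^{\tau}+e=1$. Set $L=eA^n$ and $L'=(1-e)A^n=e^{\tau}A^n$, so that $A^n=L\oplus L'$. Using the adjoint identity,
\[
f_n(ex,ey)=f_n(x,e^{\tau}ey)=f_n(x,(1-e)ey)=0
\]
because $(1-e)e=0$, and similarly
\[
f_n(e^{\tau}x,e^{\tau}y)=f_n(x,ee^{\tau}y)=f_n(x,e(1-e)y)=0.
\]
Thus $L$ and $L'$ are both totally isotropic and $A^n=L\oplus L'$, so $f_n$ is hyperbolic.

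For the converse, suppose $A^n=L\oplus L'$ with $L$ and $L'$ totally isotropic, and let $e\in\nEnd[A]{A^n}=\nMat{A}{n}$ be the projection onto $L$ along $L'$, so that $e^2=e$ and $(1-e)$ is the projection onto $L'$. Total isotropy of $L$ gives $f_n(x,e^{\tau}ey)=f_n(ex,ey)=0$ for all $x,y$, and unimodularity of $f_n$ yields $e^{\tau}e=0$. Total isotropy of $L'$ analogously gives $(1-e^{\tau})(1-e)=0$. Expanding the latter and using $e^{\tau}e=0$ produces $1-e-e^{\tau}=0$, i.e.\ $e^{\tau}+e=1$, so $n\times\sigma$ is hyperbolic.

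The only real obstacle is bookkeeping: one has to choose conventions for left/right actions and row/column vectors carefully so that the adjoint of $M$ with respect to $f_n$ is exactly $M^{\tau}$ rather than some twist; once that identification is in place, the equivalence follows essentially from the polarization identities $(1-e)e=0$ and $e(1-e)=0$ together with the unimodularity of $f_n$.
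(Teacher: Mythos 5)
Your proof is correct and takes essentially the same approach as the paper: identify $n\times\sigma$ with the adjoint involution of $f_n$ on $\nMat{A}{n}=\nEnd[A]{A^n}$ and translate between the idempotent equation $e^\tau+e=1$ and a totally isotropic decomposition $A^n=L\oplus L'$. The only cosmetic difference is in the converse, where the paper derives $e^\tau=1-e$ by one chain of equalities and unimodularity, while you first extract $e^\tau e=0$ and $(1-e^\tau)(1-e)=0$ and then combine them.
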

	
	\begin{proof}
		View the elements of the right $A$-module $A^n$ as column vectors
		and identify $\End_A(A^n)$ with $\nMat{A}{n}$.
		Writing $\tau=n\times \sigma$, it is easy 
		to see that $f_n(a  x,y)=f_n(x,a^\tau y)$ for all $a\in\nMat{A}{n}$
		and $x,y\in A^n$.
		
		Suppose that there exists an idempotent  $e\in\nMat{A}{n}$ such that
		$e^\tau+e=1$. Then $f_n(ex,ey)=f_n(x,e^\tau ey)=f_n(x,0)=0$,
		and similarly $f_n((1-e)x,(1-e)y))=0$. Since $A^n=\im(e)+\im(1-e)$,
		it follows that $f_n$ is hyperbolic.
		
		Conversely, suppose that there exist $A$-submodules $U,V\subseteq A$
		such that $A^n=U\oplus V$ and $f(U,U)=f(V,V)=0$,
		and let $e=\id_U\oplus 0_V\in \nMat{A}{n}$.
		Then for all $x,y\in A^n$, we have 
		$f_n(x,e^\tau y)=f_n(e x,y)=f_n(ex,ey+(1-e)y)=f_n(e x,(1-e)y)=
		f_n(ex+(1-e)x,(1-e)y)=f_n(x,(1-e)y)$. 
		Since $f_n$ is unimodular, $e^\tau=1-e$ and $\tau$ is hyperbolic.
	\end{proof}
	
	We record the following   corollary:
	
	\begin{cor}
		\label{CR:gcd-for-algebras}
		Let $(A,\sigma)$ be a $K$-algebra
		with involution and let $n,m\in\N$.
		If $n\times \sigma$
		and $m\times \sigma$
		are hyperbolic, then so is $\gcd(n,m)\times \sigma$.
	\end{cor}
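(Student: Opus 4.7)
The plan is to translate the problem into a statement about hermitian forms via Proposition~\ref{PR:hyp-inv-vs-hyp-form} and then run a Euclidean-algorithm argument underpinned by Witt cancellation.

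First I would apply Proposition~\ref{PR:hyp-inv-vs-hyp-form} to rephrase the hypotheses: $n\times\sigma$ being hyperbolic is equivalent to the $1$-hermitian form $f_n$ on $A^n$ being hyperbolic, and likewise for $m$ and for $d:=\gcd(n,m)$. Noting that $f_k \cong f_1 \perp \cdots \perp f_1$ ($k$ summands), where $f_1(x,y)=x^\sigma y$ on $A$, I would observe that $f_1$ is unimodular, since $y\mapsto f_1(\cdot,y)$ identifies $A$ with $\Hom_A(A,A)$. Consequently every $f_k$ is unimodular.

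Next I would run the Euclidean algorithm on $(n,m)$: assuming $n\geq m$, the decomposition $f_n \cong f_{n-m} \perp f_m$ together with the hyperbolicity of $f_n$ and $f_m$ should yield hyperbolicity of $f_{n-m}$. Iterating, one arrives at $f_d$, which is then hyperbolic, and a second application of Proposition~\ref{PR:hyp-inv-vs-hyp-form} completes the proof.

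The main obstacle is justifying the inductive step, which is precisely the Witt cancellation principle for unimodular $1$-hermitian forms over $(A,\sigma)$: if $X \perp Y$ is hyperbolic and $Y$ is hyperbolic, then $X$ is hyperbolic. For a finite-dimensional $K$-algebra with involution in characteristic not $2$ this is classical; it can either be cited from Scharlau's book or reduced by Morita equivalence to Witt cancellation over a division algebra with involution. Equivalently, one may phrase the whole argument inside the Witt group of unimodular hermitian forms over $(A,\sigma)$: the hypotheses give $n[f_1] = m[f_1] = 0$, hence $d[f_1]=0$, and for a unimodular form vanishing in the Witt group is the same as being hyperbolic.
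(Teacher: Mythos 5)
Your proposal is correct, and the Witt group formulation you give in the final sentences is essentially identical to the paper's proof: the paper also translates to hermitian forms via Proposition~\ref{PR:hyp-inv-vs-hyp-form}, observes that $n[f_1]=m[f_1]=0$ in the Witt group forces $\gcd(n,m)[f_1]=0$, and then cites the fact that a unimodular form with trivial Witt class is hyperbolic (the paper uses \cite[Proposition~5.12]{Bayer_2014_hermitian_categories} for this last step, where you invoke Scharlau or Morita reduction). Your Euclidean-algorithm phrasing is a harmless repackaging of the same cancellation fact and does not change the substance.
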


	\begin{proof}
		By Proposition~\ref{PR:hyp-inv-vs-hyp-form},
		we need to show that $f_{\gcd(n,m)}=\gcd(n,m)\times f_1$
		is hyperbolic. By assumption, $f_n=n\times f_1$
		and $f_m=m\times f_1$ 
		represent the trivial class in the Witt group of $(A,\sigma)$,
		so $\gcd(n,m)\times f_1$ also represents
		the trivial class.
		By \cite[Proposition~5.12]{Bayer_2014_hermitian_categories} (for instance),
		this means that $f_{\gcd(n,m)}$ is hyperbolic. 
	\end{proof}

	The purpose of this section is to prove the following theorem,
	which was established by Lewis and Unger \cite[Theorem~3.2]{Lewis_2003_local_global_princ_alg_with_inv}
	for central simple $K$-algebras with involution.

	\begin{thm}\label{TH:trace-general}
		Let $(A,\sigma)$
		be   a finite-dimensional
		$K$-algebra with  involution. Then the following conditions
		are equivalent:
		\begin{enumerate}[label=(\alph*)]
			\item $n\times (A,\sigma)$ is hyperbolic 
			for some $n\in \N$.
			\item  $\sgn q_{A,\sigma}=0$.
		\end{enumerate}
		When these conditions hold, the minimal   $n$ for which
		(a) holds is a power of $2$.
	\end{thm}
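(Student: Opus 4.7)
The plan is to prove $(a)\Rightarrow(b)$ by a direct trace-transfer, and $(b)\Rightarrow(a)$ by reducing modulo the Jacobson radical, decomposing the semisimple quotient, and applying Lewis and Unger's theorem to each simple factor.

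For $(a)\Rightarrow(b)$: by Proposition~\ref{PR:hyp-inv-vs-hyp-form}, the hyperbolicity of $n\times\sigma$ is equivalent to the hyperbolicity of the $1$-hermitian form $f_n$ on $A^n$. If $A^n=L\oplus L'$ is a decomposition with $f_n(L,L)=f_n(L',L')=0$, then $L$ and $L'$ are also totally isotropic $K$-subspaces for the $K$-quadratic form $x\mapsto\Tr_{A/K}(f_n(x,x))$, which equals $n\times q_{A,\sigma}$. Hence $n\times q_{A,\sigma}$ is hyperbolic, and Proposition~\ref{PR:Pfister-for-non-unimod} forces $\sgn q_{A,\sigma}=0$.

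For $(b)\Rightarrow(a)$, set $J:=\Jac(A)$, a $\sigma$-stable nilpotent ideal, and $\quo{A}:=A/J$. Every $x\in J$ is nilpotent in $A$, hence has $\Tr_{A/K}(x)=0$, so $J\subseteq\rad q_{A,\sigma}$ and $q_{A,\sigma}$ descends to a form $\quo q$ on $\quo A$. Additivity of the trace over the $\sigma$-stable filtration $A\supset J\supset J^2\supset\dots$ gives
\[\quo q(\quo x)=\sum_{i\ge0}\tr_{M_i/K}(L_{\quo x^{\quo\sigma}\quo x}),\qquad M_i:=J^i/J^{i+1},\]
exhibiting $\quo q$ as a sum of trace forms $Q_{M_i}$ on $\quo A$ coming from the $\quo A$-modules $M_i$, with $Q_{M_0}=q_{\quo A,\quo\sigma}$. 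The crucial technical input is a \emph{non-negativity} statement: for every semisimple $K$-algebra with involution $(B,\tau)$ and every $B$-module $M$, the form $x\mapsto\tr_{M/K}(x^\tau x)$ has $\sgn_P\ge 0$ at each ordering $P$ of $K$. I would verify this by base-changing to $K_P$ and invoking the classification of semisimple algebras with involution over a real closed field: pairs of simple factors swapped by $\tau$ contribute hyperbolic trace forms, while on a $\tau$-stable simple factor the trace form of any module is a non-negative integer multiple of the trace form of the unique simple module, whose signature is non-negative by Lewis and Unger's signature computations. Granted this, $\sgn q_{A,\sigma}=0$ forces $\sgn Q_{M_i}=0$ for every $i$; in particular $\sgn q_{\quo A,\quo\sigma}=0$.

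To conclude, I would prove (a) for $(\quo A,\quo\sigma)$ and lift the result back to $A$. For the lift, a $\sigma$-equivariant version of the Wedderburn--Malcev theorem (available in characteristic $0$, and more generally when $\quo A$ is separable over $K$) produces a $\sigma$-stable subalgebra $B\subseteq A$ with $A=B\oplus J$ and $B\cong\quo A$, so any hyperbolic idempotent for $n\times\quo\sigma$ in $\nMat{B}{n}$ is automatically one for $n\times\sigma$ in $\nMat{A}{n}$. To prove (a) for $(\quo A,\quo\sigma)$, decompose $\quo A=\prod_j\quo A_j$ into simple $K$-algebras with involution: pairs of simple factors swapped by $\quo\sigma$ yield hyperbolic involutions immediately via $e=(1,0)$, while each $\quo\sigma$-stable simple factor $(\quo A_j,\quo\sigma_j)$ is a central simple algebra with involution over $Z_j:=\Cent(\quo A_j)^{\quo\sigma_j}$, with $q_{\quo A_j,\quo\sigma_j}$ equal to $\Tr_{Z_j/K}$ applied to the corresponding $Z_j$-valued trace form. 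Scharlau's transfer formula expresses $\sgn_P q_{\quo A_j,\quo\sigma_j}$ as the sum over orderings $P'$ of $Z_j$ extending $P$ of the $P'$-signatures of the $Z_j$-trace form; combined with non-negativity, this propagates $\sgn q_{\quo A_j,\quo\sigma_j}=0$ to the vanishing of the $Z_j$-signature. Lewis and Unger's theorem~\cite[Theorem~3.2]{Lewis_2003_local_global_princ_alg_with_inv} then yields $n_j\times\quo\sigma_j$ hyperbolic with $n_j$ a power of $2$, and Corollary~\ref{CR:gcd-for-algebras} combines these into a single $n$, a power of $2$, for which $n\times\sigma$ is hyperbolic. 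The main obstacle is the non-negativity claim, which rests on a careful case-by-case analysis of central simple algebras with involution over real closed fields.
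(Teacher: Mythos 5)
Your proof takes a genuinely different route from the paper's, and the $(a)\Rightarrow(b)$ half is a nice shortcut: transporting hyperbolic idempotents to totally isotropic $K$-subspaces and invoking Proposition~\ref{PR:Pfister-for-non-unimod} is more elementary than the paper's argument, which base-changes to $K_P$ and invokes the full structure theory over the real closure (Lemma~\ref{LM:trace-of-inv-in-real-closed}). Your $(b)\Rightarrow(a)$ is also structured differently: you pass to the semisimple quotient, decompose into simple factors, descend along $\Cent(A_j)^{\{\sigma_j\}}/K$ by Scharlau transfer, and feed the result into Lewis--Unger; the paper instead runs a Zorn's-lemma argument to produce a maximal subfield $L\subseteq\quo K$ on which nothing works, shows $L$ is real closed using odd-degree descent (Proposition~\ref{PR:Bayer-Lenstra}) and the quadratic-extension Lemma~\ref{LM:no-quad-extensions}, and contradicts the real-closed case. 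The paper's route buys uniformity: the structure theory (Lemma~\ref{LM:trace-description}) is used only after passage to a real closed field, where the coefficients $\alpha_{i}\in\Q_{>0}$ genuinely live in the field.

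There are two substantive issues with your $(b)\Rightarrow(a)$. First, the lifting step via a $\sigma$-equivariant Wedderburn--Malcev decomposition is a gap outside characteristic $0$ (or more generally outside the separable case), which you acknowledge; $K$ can have characteristic $p>2$ and the theorem is not vacuous there (condition (b) is automatic, but (a) requires producing an $n$). The paper's Lemma~\ref{LM:reduction-mod-nilpotent} avoids this entirely by lifting hyperbolic idempotents directly modulo the nilpotent ideal $M_n(J)$, using only $2\in\units A$; you should use this instead of Wedderburn--Malcev. Second, the step ``$\sgn q_{A,\sigma}=0$ forces $\sgn Q_{M_i}=0$ for every $i$'' does not follow from non-negativity of the summands alone: the decomposition $\quo q=\sum_i Q_{M_i}$ is a \emph{pointwise} sum of forms on the same space, and signature is not superadditive over pointwise sums (e.g.\ $\mathrm{diag}(1,-3)$ and $\mathrm{diag}(-1,2)$ both have signature $0$, but their pointwise sum $\mathrm{diag}(0,-1)$ has signature $-1$). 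What saves the argument is precisely the structural fact you invoke \emph{only} in verifying non-negativity: on each $\quo\sigma$-stable grouped simple factor $S_j$, every $Q_{M_i}|_{S_j}$ is a non-negative multiple of a single form $q_{A_j,\sigma_j}$ and the $S_j$ are mutually orthogonal. With this in hand one gets $\quo q|_{S_j}=(\sum_i c_{ij})\,q_{A_j,\sigma_j}$ with $\sum_i c_{ij}>0$, whence $\sgn\quo q=\sum_j\sgn q_{A_j,\sigma_j}$ and the non-negativity argument applies to \emph{this orthogonal} decomposition. This is exactly the content of the paper's Lemma~\ref{LM:trace-description}(ii), and you need to make the orthogonal (not merely pointwise) decomposition explicit. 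Finally, a small misattribution: to combine the $n_j\times\quo\sigma_j$ hyperbolic for different $j$ into a single $n\times\quo\sigma$ hyperbolic you should take $n=\lcm_j n_j$; Corollary~\ref{CR:gcd-for-algebras} is used afterwards, to conclude that the \emph{minimal} such $n$ divides $\lcm_j n_j$ and is therefore a power of $2$.
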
	
	
	Similarly to \cite{Lewis_2003_local_global_princ_alg_with_inv},
	we first establish the theorem when   $K$ real-closed,
	and then use it to prove    the general case.

	\begin{lem}\label{LM:the-real-simple-case}
		Suppose that $K$ is real closed and $(A,\sigma)$
		is a simple $K$-algebra with involution. Then:
		\begin{enumerate}[label=(\roman*)]
			\item $\sgn q_{A,\sigma}\geq 0$.
			\item $\sgn q_{A,\sigma}=0$ if and only if $2\times (A,\sigma)$
			is hyperbolic.
		\end{enumerate}
	\end{lem}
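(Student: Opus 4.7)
The plan is to reduce to the classification of simple $K$-algebras with involution over a real closed field, which yields a short list of cases that can be handled either by quoting Lewis and Unger's theorem or by an explicit computation. Since $(A,\sigma)$ is simple as a $K$-algebra with involution, the underlying $K$-algebra $A$ is either simple as a $K$-algebra or isomorphic to $B\times B^\op$ with $B$ simple and $\sigma$ the exchange involution.

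The exchange case dispatches both (i) and (ii) at once. The idempotent $e=(1,0)$ satisfies $e^\sigma+e=1$, so $(A,\sigma)$ itself, and a fortiori $2\times(A,\sigma)$, is hyperbolic. Moreover $e^\sigma e=(1-e)e=0$, so $q_{A,\sigma}$ vanishes on both $eA$ and $e^\sigma A=(1-e)A$, exhibiting $q_{A,\sigma}$ as a hyperbolic quadratic form in the sense of the introduction, and in particular of signature $0$. I would record this as a separate observation, since the same calculation shows that whenever $(A,\sigma)$ is hyperbolic, so is the quadratic form $q_{A,\sigma}$; this is also what I would use for the easy direction of (ii) in the remaining cases, combined with $q_{n\times(A,\sigma)}\cong n^2\times q_{A,\sigma}$.

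When $A$ is simple, its center $Z$ is a finite extension of $K$, so $Z\in\{K,\bar K\}$ with $\bar K:=K[\sqrt{-1}]$. If $Z^\sigma=K$ --- either $Z=K$, or $Z=\bar K$ with $\sigma|_{\bar K}$ nontrivial --- then $(A,\sigma)$ is central simple as a $K$-algebra with involution, and both assertions follow from Lewis and Unger's theorem \cite[Theorem~3.2]{Lewis_2003_local_global_princ_alg_with_inv}; over real closed $K$ the integer $n$ furnished by their theorem can be taken to be $2$, because the relevant Witt torsion has exponent $2$ (alternatively, one invokes Corollary~\ref{CR:gcd-for-algebras}). The residual case is $Z=\bar K$ with $\sigma|_{\bar K}=\id$, so that $\sigma$ is $\bar K$-linear and $A\cong\nMat{\bar K}{m}$ with $\sigma$ either symplectic (when $m$ is even) or orthogonal. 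A symplectic $\sigma$ is already hyperbolic by the standard Weyl idempotent, valid over any field, reducing to the hyperbolic case. For an orthogonal $\sigma$ I would factor $\Tr_{A/K}=\Tr_{\bar K/K}\circ\Tr_{A/\bar K}$ and diagonalize the $\bar K$-valued form $T(x):=\Tr_{A/\bar K}(x^\sigma x)$ over $\bar K$; the key observation is that for every nonzero $\alpha\in\bar K$, the two-variable $K$-form $y\mapsto\Tr_{\bar K/K}(\alpha y^2)$ on $\bar K$ viewed as a $K$-space has Gram determinant $-4|\alpha|^2<0$ and is therefore a hyperbolic plane over the real closed field $K$. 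Summing over the diagonal entries shows that $q_{A,\sigma}$ is hyperbolic, so $\sgn q_{A,\sigma}=0$; for (ii) in this case, $2\times(A,\sigma)$ is an orthogonal involution on $\nMat{\bar K}{2m}$ adjoint to $h\perp h$, a symmetric $\bar K$-form of even rank over the algebraically closed $\bar K$, which is hyperbolic.

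The main obstacle will be the case split on $Z^\sigma$: Lewis and Unger's theorem presupposes $Z^\sigma=K$, and I must verify that the only simple cases escaping this hypothesis over a real closed $K$ are the first-kind $\bar K$-algebras treated by the explicit computation above; this hinges on the fact that finite extensions of a real closed field are exhausted by $K$ and $\bar K$. Once the case division is in place, each branch reduces to a short idempotent argument, a diagonalization, or a direct appeal to the cited theorem, and no genuinely new difficulty arises.
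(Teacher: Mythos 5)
Your overall strategy mirrors the paper's: reduce via the classification of simple $K$-algebras with involution over a real closed field, dispatch the ``central simple'' branch by citation, and compute the remaining cases directly. The case organization is slightly different --- you pull the exchange case out first, whereas the paper sorts by whether $\Cent(A)^{\{\sigma\}}=K$, which places the exchange over $K$ in the cited branch and the exchange over $\bar K$ in the computed one --- but the same ground is covered, and your explicit treatment of the exchange case and of the orthogonal involution over $\bar K$ (via $\Tr_{\bar K/K}(\alpha y^2)$ having Gram determinant $-4\alpha\bar\alpha<0$) is cleaner and more informative than the paper's ``it is routine to check.''

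The one real gap is in the branch you send to Lewis and Unger. Theorem~3.2 of \cite{Lewis_2003_local_global_princ_alg_with_inv} states the local--global principle: $n\times(A,\sigma)$ is hyperbolic for some $n$ iff $\sgn q_{A,\sigma}=0$, with the minimal such $n$ a power of $2$. It asserts neither (i), that $\sgn q_{A,\sigma}\ge 0$, nor the refinement needed for (ii), that $n=2$ already suffices over a real closed base. Your two attempted repairs do not close this. Corollary~\ref{CR:gcd-for-algebras} only passes from $n$ and $m$ to $\gcd(n,m)$, which together with the power-of-$2$ statement shows the minimal $n$ is $2^k$ for some $k$, not that $k\le 1$. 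The claim that ``the relevant Witt torsion has exponent $2$'' over a real closed field is exactly the content one must prove (or cite precisely), and stating it as an aside leaves both (i) and the $n=2$ bound unestablished. The paper instead cites \cite[Corollary~5.3]{Becher_2018_weakly_hyerbolic_involutions}, which is tailored to deliver precisely these two facts for central simple algebras with involution over real closed fields; replacing your Lewis--Unger citation by that one (or by an explicit signature computation in the style you already carry out for the second-kind case) would make the argument complete.
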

	
	\begin{proof}
		The case where $(A,\sigma)$ is \emph{central} simple over $K$
		is contained in \cite[Corollary~5.3]{Becher_2018_weakly_hyerbolic_involutions}.
		It remains to consider the case where $C:=\Cent(A)^{\{\sigma\}}$ is strictly
		larger than $K$.
		Since $K$ is real closed, $C=K[\sqrt{-1}]$ and $C$ is algebraically
		closed.
		In this case, it is well-known that there exists $n\in\N$
		such that one of the following hold:
		\begin{enumerate}[label=(\roman*)]
			\item $(A,\sigma)\cong (\nMat{C}{n},\trans)$;
			\item $(A,\sigma)\cong (\nMat{C}{2n},\mathrm{s})$
			with  $\mathrm{s}$   given by $[\begin{smallmatrix} a & b \\ c & d \end{smallmatrix}]
			\mapsto [\begin{smallmatrix} d^\trans & -b^\trans \\ -c^\trans & a^\trans \end{smallmatrix}]$
			($a,b,c,d\in\nMat{K}{n}$);
			\item $(A,\sigma)\cong (\nMat{C}{n}\times \nMat{C}{n}^\op,\mathrm{u})$ with
			$\mathrm{u}$   given by $(a,b^\op)\mapsto (b,a^\op)$.
		\end{enumerate}
		It is routine to check that in each of these cases
		$2\times (A,\sigma)$ is hyperbolic and $\sgn q_{A,\sigma}=0$.
	\end{proof}
	
	\begin{lem}\label{LM:trace-description}
		Suppose that $\Char K=0$.
		Let $(A,\sigma)$ be a $K$-algebra with involution,
		let $J $ denote its Jacobson radical,
		let $\quo{A}=A/J$ and let $\quo{\sigma}:\quo{A}\to \quo{A}$
		denote the   involution induced by $\sigma$.
		Then:
		\begin{enumerate}[label=(\roman*)]
			\item $(\quo{A},\quo{\sigma})$
			factors as a product $\prod_{i=1}^t(A_i,\sigma_i)$
			of simple algebras with involution.
			\item There exist  $\alpha_1,\dots,\alpha_t \in \Q_{>0}\subseteq K$
			such that
			\[q_{A,\sigma}\cong \alpha_1 q_{A_1,\sigma_1}\oplus\dots
			\oplus \alpha_t q_{A_t,\sigma_t} \oplus\langle 0,\dots,0\rangle .\]
		\end{enumerate}
	\end{lem}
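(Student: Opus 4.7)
For (i), the Jacobson radical $J$ is stable under $\sigma$---it is characterized intrinsically as the largest nilpotent two-sided ideal, and $\sigma$ is an anti-automorphism---so $\quo{\sigma}$ descends to a well-defined involution on the semisimple algebra $\quo{A}$. By Artin--Wedderburn, $\quo{A}\cong E_1\times\cdots\times E_s$ as $K$-algebras with each $E_p$ simple, and $\quo{\sigma}$ permutes the factors. Grouping the $E_p$ by $\quo{\sigma}$-orbits (each of size one or two, since $\quo{\sigma}^2=\id$) produces a decomposition $(\quo{A},\quo{\sigma})\cong\prod_{i=1}^t(A_i,\sigma_i)$ into simple algebras with involution, proving (i).

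Turning to (ii), I would first observe that $J$ lies in the radical of the bilinear form associated to $q_{A,\sigma}$: for $j\in J$ and $y\in A$, the product $j^\sigma y$ lies in the nilpotent ideal $J$, so $\Tr_{A/K}(j^\sigma y)=0$. Consequently $\Tr_{A/K}$ factors through a $K$-linear functional $\widetilde{\Tr}\colon\quo{A}\to K$, and $q_{A,\sigma}$ factors through a quadratic form $\tilde q$ on $\quo{A}$ given by $\tilde q(\bar x)=\widetilde{\Tr}(\quo{\sigma}(\bar x)\bar x)$, yielding $q_{A,\sigma}\cong\tilde q\oplus\langle 0,\dots,0\rangle$ with $\dim_K J$ zeros. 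Using (i), the factors $A_i\subseteq\quo{A}$ are pairwise orthogonal under $\tilde q$ (for $\bar x_i\in A_i$ and $\bar x_j\in A_j$ with $i\neq j$, $\quo{\sigma}(\bar x_i)\bar x_j\in A_iA_j=0$), so $\tilde q=\bigoplus_i\tilde q|_{A_i}$. It therefore suffices to show $\tilde q|_{A_i}=\alpha_i q_{A_i,\sigma_i}$ for some $\alpha_i\in\Q_{>0}$.

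This last step is the main obstacle; I would handle it via a trace computation on the bimodule $J$. Choose a Wedderburn--Malcev complement $B\subseteq A$ (so $A=B\oplus J$ with $B\cong\quo{A}$ as $K$-algebras), and decompose $B=\prod_p E_p$ and, correspondingly, $J=\bigoplus_{p,q}J_{pq}$ with each $J_{pq}$ an $(E_p,E_q)$-bimodule. Because $E_p$ is a simple $K$-algebra, every finite-dimensional left $E_p$-module is a direct sum of copies of the unique simple left $E_p$-module, from which a direct calculation yields $\Tr_K(L_a|_{J_{pq}})=(\dim_K J_{pq}/\dim_K E_p)\cdot\Tr_{E_p/K}(a)$ for $a\in E_p$; summing over $q$ and adding the contribution from $B$ itself gives $\widetilde{\Tr}|_{E_p}=(1+\gamma_p)\Tr_{E_p/K}$ with $\gamma_p\in\Q_{\geq 0}$. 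When $A_i=E_p$ is itself $\quo{\sigma}$-stable, this directly yields $\tilde q|_{A_i}=(1+\gamma_p)q_{A_i,\sigma_i}$. When instead $A_i\cong E_{p_1}\times E_{p_2}$ with $\sigma_i$ swapping the two factors, writing $\bar a=(a_1,a_2)\in A_i$ one computes $\quo{\sigma}(\bar a)\bar a=(\sigma_i(a_2)a_1,\sigma_i(a_1)a_2)$, and the identity $\Tr_{E_{p_1}/K}(\sigma_i(a_2)a_1)=\Tr_{E_{p_2}/K}(\sigma_i(a_1)a_2)$---a consequence of $\sigma_i\colon E_{p_2}\to E_{p_1}$ being a $K$-linear anti-isomorphism between simple algebras together with the fact that the left and right regular traces coincide on any simple $K$-algebra---combines the two contributions to give $\alpha_i=1+(\gamma_{p_1}+\gamma_{p_2})/2\in\Q_{>0}$.
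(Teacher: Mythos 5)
Your proof is correct, and the overall strategy is the same as the paper's: show $J$ lies in the radical of $q_{A,\sigma}$, observe that the simple factors become orthogonal, and reduce to proving that $\Tr_{A/K}$ restricted to (a lift of) each $A_i$ is a positive rational multiple of $\Tr_{A_i/K}$, the multiplier being computed by regarding pieces of $J$ as modules over the simple factors. The place where you diverge is organizational: you invoke a Wedderburn--Malcev complement $B$ (a subalgebra, legitimate since $\Char K=0$) and decompose $J$ itself into $(E_p,E_q)$-bimodules $J_{pq}=e_pJe_q$, whereas the paper sidesteps Wedderburn--Malcev entirely by using only a \emph{vector-space} complement $S_1\oplus\dots\oplus S_t$ of $J$ and the radical filtration, writing $\Tr_{A/K}(a)=\sum_{j\geq 0}\Tr(a|_{J^j/J^{j+1}})$ and decomposing each graded piece $J^j/J^{j+1}$ into its $A_i$-isotypic summands $V_{i,j}$. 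Both routes yield the same multiplicity count (your $\gamma_p = \sum_q \dim_K J_{pq}/\dim_K E_p$ corresponds to the paper's $\sum_{j\geq 1}\alpha_{i,j}$). Your treatment of the non-simple factors $A_i\cong E_{p_1}\times E_{p_2}$ via the identity $\Tr_{E_{p_1}/K}(\sigma_i(a_2)a_1)=\Tr_{E_{p_2}/K}(\sigma_i(a_1)a_2)$ is a somewhat cleaner packaging of what the paper does by a direct length computation in the module $U^n\oplus W^n$; both hinge on the same underlying fact that left and right regular traces agree on a simple algebra. The paper's choice to avoid Wedderburn--Malcev is mildly more economical, but conceptually the two arguments are close.
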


	\begin{proof}
		(i) This follows from the fact that $\quo{A}$ is semisimple
		and $\quo{\sigma}$ permutes the central idempotents of $\quo{A}$.
		The details are left to the reader.	
	
		(ii)	
		For every
		$a\in A$, write $a^{(i)}$ for the image of $a$ under $A\to \quo{A}\to A_i$.
		If $M$ is a left $A$-module, let $a|_M$ denote  the linear
		operator $x\mapsto ax:M\to M$.
		
		Since $J$ is a left submodule of $A$,
		we have
		$\Tr_{A/K}(a)=\sum_{j\geq 0} \Tr(a|_{J^j/J^{j+1}})$  for all $a\in A$.
		Write $J^j/J^{j+1}=\prod_{i=1}^t V_{i,j}$,
		where $V_{i,j}$ is a left $A_i$-module.
		Then
		\begin{align}\label{EQ:trace-J-description}
		\Tr_{A/K}(a)=\sum_{j\geq 0}\sum_{i=1}^t \Tr(a^{(i)}|_{V_{i,j}}).
		\end{align}
		
		Choose $K$-subspaces $S_1,\dots,S_t\subseteq A$
		such that $A=J\oplus S_1\oplus\dots\oplus S_t$
		and $\quo{S_i}=A_i$ for all $i$. Then $S_iS_{i'}\subseteq J$ for   $i\neq i'$.
		By \eqref{EQ:trace-J-description}, $q_{A,\sigma}$
		vanishes on $J$, so
		$S_1,\dots,S_t,J$ are pairwise orthogonal
		relative to $q_{A,\sigma}$.
		
		We claim that for all $i\in\{1,\dots,t\}$ and $j\geq 0$, there
		is $\alpha_{i,j}\in \Q_{>0}$ 
		such that
		\[
		\Tr(x^{\sigma_i}x|_{V_{i,j}})
		=\alpha_{i,j} \Tr_{A_i/K}(x^{\sigma_i} x)
		\]
		for all $x\in A_i$. Provided this holds, the map
		$a\mapsto a^{(i)}:S_i\to A_i$
		defines an isometry
		from $q_{A,\sigma}|_{S_i}$ to $(\sum_j \alpha_{i,j})\cdot q_{A_i,\sigma_i}$,
		and the proposition follows.
		
		Fix $i$ and write $(B,\tau)=(A_i,\sigma_i)$. If $B$ is simple and $W$
		is a simple left $B$-module,
		then $V_{i,j}\cong W^m$ and ${}_B B\cong W^n$ for some $n,m$.
		Thus, for all $y\in B$,
		we have $\Tr(y|_{V_{i,j}})=\Tr(y|_{W^m})=m\Tr(y|_W)=\frac{m}{n}\Tr(y|_{W^n})=\frac{m}{n}\Tr_{B/K}(y)$,
		and we can take $\alpha_{i,j}=\frac{m}{n}$.
		
		If $B$ is not simple, then we may assume
		that $B=C\times C^\op$, where $C$
		a simple $K$-algebra, and $\tau$ is given by $(y,z^\op)\mapsto (z,y^\op)$.
		Let $U$ denote a simple left $C$-module and let $W$ denote a simple left $C^\op$-module.
		Then there is $n\in\N$
		such that   ${}_BB\cong U^n\oplus W^n$.
		Writing $x=(y,z^\op)\in C\times C^\op$,
		it is easy to   check that $\Tr_{B/K}(x^\tau x)=2\Tr_{C/K}(yz)=2n\Tr(yz|_U)=2n\Tr(x^\tau x|_U)$,
		and similarly, $\Tr_{B/K}(x^\tau x)=2n \Tr(x^\tau x|_W)$.
		Now, as in the previous paragraph,
		we see that $\Tr(x^\tau x|_{V_{i,j}})=\frac{m}{2n}\Tr_{B/K}(x^\tau x)$
		with $m=\mathrm{length}_B V_{i,j}$.
		This completes the proof.
	\end{proof}
	
	\begin{lem}\label{LM:reduction-mod-nilpotent}
		Let $(R,\sigma)$ be a ring with involution such that $2\in\units{R}$,
		let $J\idealof R$ be a nilpotent ideal such
		that $J^\sigma=J$, and let $\quo{\sigma}$
		denote the involution $r+J\mapsto r^\sigma+J:R/J\to R/J$.
		Then every idempotent $\veps\in R/J$ satisfying
		$\veps^{\quo{\sigma}}+\veps=1$ is the image of
		an idempotent $e\in R$
		satisfying $e^\sigma+e=1$. In particular,
		$\sigma$ is hyperbolic if and only if $\quo{\sigma}$
		is hyperbolic.
	\end{lem}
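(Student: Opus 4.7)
\medskip

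\noindent\textbf{Proof proposal.} The plan is to combine the classical lifting of idempotents modulo a nilpotent ideal with a symmetrization step and a Newton-style iteration that preserves the involution-condition $e+e^\sigma=1$. First I pick any lift $e_0\in R$ of $\veps$ (i.e.\ $e_0+J=\veps$) and replace it by
\[
e_1\;:=\;\tfrac{1}{2}\bigl(e_0+1-e_0^\sigma\bigr),
\]
which makes sense because $2\in\units{R}$. A direct calculation gives $e_1+e_1^\sigma=1$ exactly, and since $1-e_0-e_0^\sigma\equiv 1-\veps-\veps^{\quo{\sigma}}=0\pmod{J}$, we also have $e_1\equiv\veps\pmod{J}$. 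In particular $e_1^2\equiv e_1\pmod{J}$, so $b_1:=e_1-e_1^2\in J$.

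Now I want to improve $e_1$ to an actual idempotent without breaking the identity $e+e^\sigma=1$. The key observation is the polynomial identity
\[
3(1-a)^2-2(1-a)^3=1-(3a^2-2a^3),
\]
which implies that if $a\in R$ satisfies $a+a^\sigma=1$, then the element $a':=3a^2-2a^3$ also satisfies $a'+(a')^\sigma=1$. Thus the Newton-type map $a\mapsto 3a^2-2a^3$ preserves our involution condition. Moreover,
\[
a'(1-a')=a^2(3-2a)\cdot(1-a)^2(1+2a)=\bigl(a(1-a)\bigr)^2(3+4a-4a^2),
\]
so if $a-a^2\in J^n$, then $a'-(a')^2\in J^{2n}$. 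Iterating this procedure starting from $e_1$ yields a sequence $e_1,e_2,e_3,\dots\in R$ with $e_k+e_k^\sigma=1$ for all $k$ and $e_k-e_k^2\in J^{2^{k-1}}$. Since $J$ is nilpotent, for $k$ sufficiently large we obtain $e_k^2=e_k$, giving the desired lift $e:=e_k$ with $e^2=e$ and $e^\sigma+e=1$.

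For the ``in particular'' clause, the implication that hyperbolicity of $\sigma$ forces hyperbolicity of $\quo{\sigma}$ is immediate by reducing an idempotent $e\in R$ with $e^\sigma+e=1$ modulo $J$. The converse is precisely the lifting statement just proved. The main obstacle is the second step: a naive lift of an idempotent need not respect $\sigma$, and a naive symmetrization need not yield an idempotent; what resolves this tension is that the Newton map $a\mapsto 3a^2-2a^3$ fortuitously commutes with the transformation $a\mapsto 1-a^\sigma$, which is the content of the polynomial identity highlighted above.
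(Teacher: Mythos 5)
Your proof is correct, and it takes a genuinely different route from the paper's. The paper first reduces to the case $J^2=0$, invokes the classical lifting of an idempotent $\veps$ to an idempotent $f\in R$ as a black box, and then performs a single correction $e=f-\tfrac{1}{2}ff^\sigma-\tfrac{1}{2}f^\sigma f$, finally arguing that $e+e^\sigma$ is an idempotent congruent to $1$ modulo the nilpotent $J$ and hence equals $1$. Your approach inverts the order of the two corrections: you first force $e_1+e_1^\sigma=1$ exactly by the symmetrization $e_1=\tfrac{1}{2}(e_0+1-e_0^\sigma)$ (using only $2\in\units{R}$), and then improve idempotency by iterating the standard Newton map $a\mapsto 3a^2-2a^3$. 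The crucial, and correctly verified, observation is the identity $3(1-a)^2-2(1-a)^3=1-(3a^2-2a^3)$, which shows that this Newton map commutes with $a\mapsto 1-a^\sigma$ and so preserves the condition $a+a^\sigma=1$. Your factorization $a'(1-a')=(a(1-a))^2(3+4a-4a^2)$ is a computation inside the commutative subring $\Z[\tfrac12][a]$, so it is valid even though $R$ is noncommutative, and it gives the quadratic gain $a-a^2\in J^n\Rightarrow a'-(a')^2\in J^{2n}$ needed for the iteration to terminate. Two things your approach buys: it avoids both the reduction to $J^2=0$ and the appeal to idempotent lifting as an external fact, making the argument self-contained; the paper's version, on the other hand, produces the lift in a single explicit step once one already has an idempotent lift $f$. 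Both are fine, and your ``in particular'' deduction is also correct.
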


	\begin{proof}
		This is a special case of \cite[Corollary~4.9.16]{First_2012_PhD}.
		We recall and streamline the proof for the sake of completeness.
		
		It is enough to consider the case $J^2=0$.
		It is well-known that $\veps$ can be lifted
		to an idempotent $f\in R$.
		Write $a=\frac{1}{2}ff^\sigma $ and $b=\frac{1}{2}f^\sigma f$.
		Then $a,b\in J$ (because $\veps\veps^{\quo{\sigma}}=\veps^{\quo{\sigma}}\veps=0$), 
		$a^\sigma=a$, $b^\sigma=b$
		and   $a^2= b^2= ab=ba=0$
		(because $J^2=0$). In addition, $af=af^\sigma f=ab=0$,
		and similarly, $f^\sigma a=fb=b f^\sigma=0$.
		Let $e=f-a-b$.  The previous identities imply
		readily that $e^2=e$ and $ee^\sigma=e^\sigma e =0$.
		Thus, $e$ is an idempotent mapping onto $\veps$
		and $e+e^\sigma$ is an idempotent
		mapping onto $1+J$.
		Since $J$ is nilpotnet, we must have $e+e^\sigma=1$.
	\end{proof}
	
	\begin{lem}\label{LM:trace-of-inv-in-real-closed}
		Theorem~\ref{TH:trace-general} holds when $K$ is real closed.
		In fact, when the conditions hold, one can take $n=2$ in  (a).
	\end{lem}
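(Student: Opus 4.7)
The plan is to prove the two implications (a)$\iff$(b) separately, exploiting the fact that over a real closed field signatures are just integers, and to pin down $n=2$ via the lemmas already at hand.

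For (a)$\Rightarrow$(b), the key observation I would establish first is that a hyperbolic involution always has a hyperbolic trace form. Concretely, suppose $(B,\tau)$ is a finite-dimensional $K$-algebra with involution and $e\in B$ is an idempotent satisfying $e^\tau+e=1$. Then the right ideals $U:=eB$ and $U':=(1-e)B$ are complementary $K$-subspaces of $B$, and for any $x=ey\in U$ one computes $x^\tau x=y^\tau e^\tau ey=y^\tau(1-e)ey=0$, and similarly on $U'$; thus $q_{B,\tau}$ vanishes on both $U$ and $U'$, so it is hyperbolic as a quadratic form and in particular $\sgn q_{B,\tau}=0$. Applying this to $B=\nMat{A}{n}$ equipped with $n\times\sigma$, and using the identification $q_{n\times(A,\sigma)}\cong n^2\times q_{A,\sigma}$ recorded earlier in the section, I conclude that $\sgn q_{A,\sigma}=0$.

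For (b)$\Rightarrow$(a) with $n=2$, I would reduce modulo the Jacobson radical. Since $K$ is real closed we have $\Char K=0$, so Lemma~\ref{LM:trace-description} applies: writing $J=\Jac(A)$, $\bar A=A/J$, and $\bar\sigma$ for the induced involution, we have $(\bar A,\bar\sigma)\cong\prod_{i=1}^t(A_i,\sigma_i)$ with each factor simple, and $q_{A,\sigma}$ is isometric to an orthogonal sum of positive-rational rescalings of the $q_{A_i,\sigma_i}$ together with a totally degenerate piece. Scaling a quadratic form by a positive element does not affect its signature, so $\sgn q_{A,\sigma}=\sum_i\sgn q_{A_i,\sigma_i}$. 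By Lemma~\ref{LM:the-real-simple-case}(i) each summand on the right is nonnegative, so the vanishing of the left-hand side forces $\sgn q_{A_i,\sigma_i}=0$ for every $i$. Lemma~\ref{LM:the-real-simple-case}(ii) then provides, for each $i$, an idempotent in $\nMat{A_i}{2}$ witnessing hyperbolicity of $2\times(A_i,\sigma_i)$; combining these gives a hyperbolic idempotent for $2\times(\bar A,\bar\sigma)$. Finally, I apply Lemma~\ref{LM:reduction-mod-nilpotent} to the ring with involution $(\nMat{A}{2},2\times\sigma)$ and its nilpotent $(2\times\sigma)$-stable ideal $\nMat{J}{2}$, whose quotient is $\nMat{\bar A}{2}$ with involution $2\times\bar\sigma$, to lift this idempotent. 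This shows $2\times(A,\sigma)$ is hyperbolic.

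The three lemmas above do essentially all of the heavy lifting, so no serious obstacle is anticipated; the only new ingredient needed is the short idempotent calculation producing the hyperbolic decomposition $B=eB\oplus(1-e)B$ used in the first direction, which is entirely routine.
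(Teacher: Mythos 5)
Your proof is correct and follows the paper's route for (b)$\Rightarrow$(a): decompose $q_{A,\sigma}$ modulo the radical via Lemma~\ref{LM:trace-description}, deduce $\sgn q_{A_i,\sigma_i}=0$ for every simple factor from Lemma~\ref{LM:the-real-simple-case}, and lift the resulting idempotent from $\nMat{\quo{A}}{2}$ to $\nMat{A}{2}$ using Lemma~\ref{LM:reduction-mod-nilpotent}. Your direct idempotent computation for (a)$\Rightarrow$(b) --- showing $e^\tau+e=1$, $e^2=e$ forces $q_{B,\tau}$ to vanish on $eB$ and $(1-e)B$ --- is a clean, field-independent way to get that direction explicitly; the paper instead packages the whole argument as a chain of biconditionals linking $\sgn q_{A,\sigma}=0$ to the hyperbolicity of $2\times(A,\sigma)$.
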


	\begin{proof}
		Let $(\quo{A},\quo{\sigma})$ and $(A_i,\sigma_i)_{i=1}^t$
		be as in Lemma~\ref{LM:trace-description}.
		By part (ii) of that lemma,
		we have $\sgn q_{A,\sigma}=\sum_i \sgn q_{A_i,\sigma_i}$.
		Since $\sgn q_{A_i,\sigma_i}\geq 0$
		for all $i$ (Lemma~\ref{LM:the-real-simple-case}(i)),
		$\sgn q_{A,\sigma}=0$ if and only
		if $\sgn q_{A_i,\sigma_i}=0$ for all $i$.
		By Lemma~\ref{LM:the-real-simple-case}(ii),
		this is equivalent to the hyperbolicity of $2\times (\quo{A},\quo{\sigma})$,
		which is in turn equivalent to $2\times (A,\sigma)$ being hyperbolic, by
		Lemma~\ref{LM:reduction-mod-nilpotent}.
	\end{proof}
	
	\begin{lem}\label{LM:no-quad-extensions}
		Let $(A,\sigma)$ be a $K$-algebra with involution
		and let $\alpha\in \units{K}$ be an element such
		$\alpha$ and $-\alpha$ are not squares in $K$.
		If both 
		$\sigma_{K[\sqrt{\alpha}]}$
		and 
		$\sigma_{K[\sqrt{-\alpha}]}$
		are hyperbolic, then so is 
		$2\times \sigma$.
	\end{lem}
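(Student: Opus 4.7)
The plan is to translate everything into hermitian-form language via Proposition~\ref{PR:hyp-inv-vs-hyp-form} and apply a Scharlau-type transfer from the quadratic extensions $L_\pm := K[\sqrt{\pm\alpha}]$ down to $K$. By Proposition~\ref{PR:hyp-inv-vs-hyp-form}, the hypothesis becomes: the unit $1$-hermitian form $f_1$ over $(A_{L_\pm}, \sigma_{L_\pm})$ is hyperbolic; and the goal becomes: $f_2 = 2 \times f_1$ over $(A, \sigma)$ is hyperbolic.

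First I would set up the transfer for $L = L_+$. Let $s \colon L \to K$ be the $K$-linear projection $a + b\sqrt{\alpha} \mapsto a$, extended $A$-bilinearly to $s \colon A_L \to A$. One readily checks that $s$ intertwines $\sigma_L$ and $\sigma$, and hence that $s \circ f_1 \colon A_L \times A_L \to A$ is a $1$-hermitian form over $(A, \sigma)$ when $A_L$ is regarded as the right $A$-module $A \oplus A\sqrt{\alpha} \cong A^2$. A direct computation under this identification gives $s \circ f_1 \cong f_1 \perp \alpha f_1$, which is unimodular because $\alpha \in \units{K}$. Moreover, if $A_L = U \oplus V$ is a hyperbolic splitting for $f_1$ over $(A_L, \sigma_L)$, then $U$ and $V$ are automatically $A$-submodules on which $s \circ f_1$ vanishes, so $f_1 \perp \alpha f_1$ is hyperbolic over $(A, \sigma)$. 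The same argument applied to $L_-$ shows that $f_1 \perp (-\alpha) f_1$ is hyperbolic.

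Taking the orthogonal sum yields that $2 \times f_1 \perp (\alpha f_1 \perp (-\alpha) f_1)$ is hyperbolic. The subform $\alpha f_1 \perp (-\alpha) f_1$ on $A^2$ is itself hyperbolic: the diagonal $\{(z, z) : z \in A\}$ and antidiagonal $\{(z, -z) : z \in A\}$ are complementary totally isotropic $A$-submodules. Arguing as in the proof of Corollary~\ref{CR:gcd-for-algebras}, $2 \times f_1$ then represents the trivial class in the Witt group of $(A, \sigma)$, so by \cite[Proposition~5.12]{Bayer_2014_hermitian_categories} it is hyperbolic. A final application of Proposition~\ref{PR:hyp-inv-vs-hyp-form} gives that $2 \times \sigma$ is hyperbolic, as desired.

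The main obstacle is verifying the transfer step cleanly: one has to keep the simultaneous $A$- and $A_L$-module structures straight and check that the section $s$ really is $A$-bilinear and commutes with the involutions. Once this bookkeeping is in place, the remaining Witt-theoretic manipulations are essentially formal.
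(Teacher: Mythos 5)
Your proof is correct, and it takes a genuinely different and more self-contained route than the paper's. The paper first reduces modulo the Jacobson radical (Lemmas~\ref{LM:trace-description} and \ref{LM:reduction-mod-nilpotent}) to the case where $(A,\sigma)$ is simple, then case-splits on whether $\pm\alpha$ is a square in $F=\Cent(A)^{\{\sigma\}}$: if so, $(A_{K[\sqrt{\pm\alpha}]},\sigma_{K[\sqrt{\pm\alpha}]})\cong(A,\sigma)\times(A,\sigma)$ is already hyperbolic, and if not, one views $(A,\sigma)$ as central simple over $F$ and invokes the Lewis--Unger result directly. You instead run a Scharlau transfer along the $K$-linear section $s\colon K[\sqrt{\pm\alpha}]\to K$ sending $a+b\sqrt{\pm\alpha}\mapsto a$, compute the transferred form explicitly as $f_1\perp(\pm\alpha)f_1$, note it is hyperbolic whenever $f_1$ is hyperbolic over the extension, and finish with a short Witt-group argument (using the same \cite[Proposition~5.12]{Bayer_2014_hermitian_categories} the paper relies on elsewhere). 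This bypasses the reduction to the simple and central simple cases entirely and in effect re-proves the Lewis--Unger input for the more general class of algebras needed here, so it is arguably cleaner. The bookkeeping you flag as the main obstacle --- that $s$ is an $A$-bimodule map intertwining $\sigma_L$ and $\sigma$, so $s\circ f_1$ is a well-defined unimodular $1$-hermitian form on the right $A$-module $A_L\cong A^2$ --- does check out; the only minor point worth spelling out in a final write-up is that the $A_L$-submodules $U,V$ in a hyperbolic splitting of $f_1$ are a fortiori $A$-submodules, which is what makes the transferred form hyperbolic over $(A,\sigma)$ and not merely metabolic.
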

	
	\begin{proof}
		The case where $(A,\sigma)$ is central simple is a result of Lewis and Unger,
		see
		\cite[p.~475]{Lewis_2003_local_global_princ_alg_with_inv}
		or
		\cite[Lemma~6.1]{Becher_2018_weakly_hyerbolic_involutions}.
		We will derive the general case from their result.
		
		Let $(\quo{A},\quo{\sigma})$ and $(A_i,\sigma_i)_{i=1}^t$
		be as in Lemma~\ref{LM:trace-description}.
		By Lemma~\ref{LM:reduction-mod-nilpotent}, it is enough
		to prove the lemma for $(\quo{A},\quo{\sigma})$,
		which in turn amounts to proving it for each factor $(A_i,\sigma_i)$.
		We may therefore assume that $(A,\sigma)$ is simple.
		
		Write $F=\Cent(A)^{\{\sigma\}}$ and recall that $F$ is a field. If $\alpha$ is a square in $F$,
		then $F\otimes_K K[\sqrt{\alpha}]\cong F\times F$.
		As a result,  $(A_{K[\sqrt{\alpha}]},\sigma_{K[\sqrt{\alpha}]})\cong (A,\sigma)\times (A,\sigma)$,
		so $(A,\sigma)$  is hyperbolic by our assumptions.
		
		Similarly, $(A,\sigma)$ is hyperbolic if $-\alpha$ is a square
		in $F$.
		
		Finally, if neither $\alpha$ nor $-\alpha$ are squares in $F$,
		then we may regard $(A,\sigma)$ as a central simple
		$F$-algebra and finish by the result mentioned at the beginning.
	\end{proof}
	
	\begin{prp}[Bayer-Fluckiger, Lenstra]
		\label{PR:Bayer-Lenstra}
		Let $(A,\sigma)$ be a $K$-algebra with involution
		and let $L/K$ be a finite odd-degree field extension.
		Then $(A_L,\sigma_L)$ is hyperbolic if and only if $(A,\sigma)$ is hyperbolic.
	\end{prp}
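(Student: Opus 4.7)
The plan is to prove the nontrivial ``only if'' direction by reducing, in three steps, to the classical Bayer-Fluckiger--Lenstra theorem for central simple algebras with involution over a field. The ``if'' direction is immediate: if $e\in A$ is an idempotent with $e^\sigma+e=1$, then $e\otimes 1\in A_L$ satisfies $(e\otimes 1)^{\sigma_L}+(e\otimes 1)=1$.

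The first reduction uses Lemma~\ref{LM:reduction-mod-nilpotent}. Let $J$ be the Jacobson radical of $A$; it is nilpotent and $\sigma$-stable, and its extension $J_L\idealof A_L$ is still nilpotent and $\sigma_L$-stable (even if it need not equal the full Jacobson radical of $A_L$ when $L/K$ is inseparable). Since $A_L/J_L\cong (A/J)_L$, applying Lemma~\ref{LM:reduction-mod-nilpotent} to both $(A,\sigma)$ and $(A_L,\sigma_L)$ lets me replace $A$ by $A/J$ and assume henceforth that $A$ is semisimple.

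The second reduction exploits the product structure. By Lemma~\ref{LM:trace-description}(i), write $(A,\sigma)\cong\prod_{i=1}^t(A_i,\sigma_i)$ as a product of simple algebras with involution. A direct product of algebras with involution is hyperbolic if and only if each factor is, since a hyperbolic idempotent of the product projects to a hyperbolic idempotent in each factor, and conversely. Applying this observation both over $K$ and over $L$ reduces the desired equivalence to each simple factor $(A_i,\sigma_i)$ individually, so I may assume $(A,\sigma)$ is simple.

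For the final step, set $F=\Cent(A)^{\{\sigma\}}$, which is a field by simplicity, and view $(A,\sigma)$ as a central simple $F$-algebra with involution. The commutative $F$-algebra $F\otimes_K L$ is finite-dimensional; after killing its nilradical (which absorbs any purely inseparable subextension of $L/K$, again via Lemma~\ref{LM:reduction-mod-nilpotent}) it splits as a product of field extensions $\prod_j L_j$ of $F$, each of degree dividing $[L:K]$ and therefore odd. The hyperbolic idempotent in $A_L\cong\prod_j(A\otimes_F L_j)$ projects to a hyperbolic idempotent in each factor, so every $(A,\sigma)\otimes_F L_j$ is hyperbolic. Invoking the classical Bayer-Fluckiger--Lenstra theorem for central simple algebras with involution, applied to the odd-degree field extension $L_j/F$ for any single $j$, then forces $(A,\sigma)$ itself to be hyperbolic. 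The main obstacle is precisely this last step: one must decompose the base change along the commutative but non-field extension $F\otimes_K L$ carefully to extract an honest odd-degree field extension over which the classical result applies; the repeated use of Lemma~\ref{LM:reduction-mod-nilpotent} is what absorbs the (otherwise awkward) inseparable and nilpotent contributions.
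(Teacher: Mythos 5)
Your route is genuinely different from the paper's. The paper does not reduce to central simple algebras at all: it invokes Proposition~\ref{PR:hyp-inv-vs-hyp-form} to pass to unimodular hermitian forms, then cites Bayer-Fluckiger--Lenstra's injectivity of the restriction map $W(A,\sigma)\to W(A_L,\sigma_L)$ for odd-degree extensions (which already applies to arbitrary finite-dimensional algebras with involution), plus the fact that a hermitian form with trivial Witt class is hyperbolic. Your argument instead reduces mod the Jacobson radical, splits into simple factors, and bases each factor over $F=\Cent(A)^{\{\sigma\}}$ to land in the classical central-simple case; this is a legitimate alternative strategy, and the first two reductions and the splitting of $A_L$ along $(F\otimes_K L)/\mathrm{nilrad}\cong\prod_j L_j$ are all sound.

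However, there is a concrete gap in the last step: the assertion that each $L_j$ has degree dividing $[L:K]$ over $F$, and hence is odd, is false. For instance, take $K=\Q$, $L=\Q[x]/(x^3+x+1)$ (degree $3$), and let $F$ be the subfield of the $S_3$-splitting field fixed by a transposition, so $[F:K]=3$; then $F\otimes_K L\cong F\times M$ with $[M:F]=2$, and $2\nmid 3$. Such an $F$ can certainly occur as $\Cent(A)^{\{\sigma\}}$ (e.g.\ $A=\nMat{F}{n}$ with transpose). What your argument actually needs, and what is true, is only that \emph{some} $L_j/F$ has odd degree: when $L/K$ is separable this follows from $\sum_j[L_j:F]=[L:K]$, and in the inseparable case (possible since $\Char K$ may be an odd prime $p$) one must argue further that the purely inseparable contributions to the residue fields have $p$-power, hence odd, degree. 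You should replace the blanket divisibility claim with this existence statement and its (separable/inseparable) justification; once that is done, applying the central-simple Bayer-Fluckiger--Lenstra theorem to that single odd-degree $L_j/F$ closes the argument, and one should also note that the oddness of $[L_j:F]$ is what guarantees $\Cent(A)\otimes_F L_j$ remains a field when $\Cent(A)$ is a quadratic extension of $F$, so $A\otimes_F L_j$ really is central simple with involution over $L_j$.
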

	
	\begin{proof}
		By Proposition~\ref{PR:hyp-inv-vs-hyp-form}, it is enough to prove
		the corresponding statement for unimodular hermitian forms
		over $(A,\sigma)$. Write $W(A,\sigma)$
		for the Witt group of $1$-hermitisn forms over $(A,\sigma)$.
		By a result of
		Bayer-Fluckiger
		and Lenstra  \cite[Proposition~1.2]{Bayer_1990_odd_degree_extensions},
		the restriction map $W(A,\sigma)\to W(A_L,\sigma_L)$
		is injective. By \cite[Proposition~5.12]{Bayer_2014_hermitian_categories} (for instance),
		every hermitian form representing the trivial class in $W(A,\sigma)$
		is hyperbolic, so we are done.
	\end{proof}

	Now we can  prove Theorem~\ref{TH:trace-general}.
	
	\begin{proof}
		To see that (a)$\implies$(b), take an ordering $P$
		of $K$ and apply Lemma~\ref{LM:trace-of-inv-in-real-closed} after base changing
		from $K$ to $K_P$.		
 		We turn to show that   (b)$\implies$(a), and moreover, that $n$ in condition (a) can be taken
 		to be a power of $2$. By Corollary~\ref{CR:gcd-for-algebras},
 		this will imply that the minimal possible $n$ for which (a) holds is a power of $2$.
 		
		Let $\quo{K}$ be an algebraic closure of $K$ and suppose,
		for the sake of contradiction, that there
		is no $k\in \N\cup\{0\}$ such that $2^k\times(A,\sigma)$
		is hyperbolic. By Zorn's lemma,
		there exists a   $K$-subfield $L\subseteq \quo{K}$
		which is maximal relative to the property
		that  $2^k\times (A_L,\sigma_L)$ is not hyperbolic for all $k$.
		Proposition~\ref{PR:Bayer-Lenstra} implies that $L$ has no proper odd-degree
		finite extensions, and by Lemma~\ref{LM:no-quad-extensions},
		for every $\alpha\in \units{K}$, at least one of $\alpha$, $-\alpha$ is a square.
		In addition, $-1$ is not a sum of squares in $L$,
		otherwise there exists $k\in\N$ such that $(\nMat{L}{2^k},\trans)$,
		and hence $2^k\times \sigma_L$, is hyperbolic \cite[Proposition~6.2]{Becher_2018_weakly_hyerbolic_involutions}.
		We conclude that $L$ is real closed, but this contradicts
		Lemma~\ref{LM:trace-of-inv-in-real-closed}.
	\end{proof}
	
	\begin{remark}
		The fact that the minimal $n$ for which condition
		(a) in Theorem~\ref{TH:trace-general} holds is a power
		of $2$ can also be derived from 
		a theorem of Bayer-Fluckiger,
		Parimala and Serre \cite[Theorem~3.1.1]{Bayer_2013_Hasse_principle_for_G_traces},
		by arguing as in the proof of Corollary~\ref{CR:gcd-for-algebras}.
	\end{remark}

\section{Proof of Theorem~\ref{TH:main-I}}
\label{sec:systems-main}

	We  use Theorem~\ref{TH:trace-general} to prove
	Theorem~\ref{TH:main-I}.

	\begin{proof}[Proof of Theorem~\ref{TH:main-I}]
		Recall that we are given a $K$-vector space  with a system
		of forms $(V,\{q_i\}_{i\in I})$  and $(A,\sigma)$ is its algebra of adjoints together
		with the canonical involution.
	
		The theorem will follow from  Theorem~\ref{TH:trace-general}
		if we verify the following two facts:
		\begin{enumerate}[label=(\roman*)]
			\item The algebra of adjoints  of $n\times\{q_i\}_{i\in I}$
			with its canonical involution
			is isomorphic to $n\times (A,\sigma)$.	
			\item $\{q_i\}_{i\in I}$ is hyperbolic if and only if $(A,\sigma)$
			is hyperbolic.
		\end{enumerate}
		The first statement is straightforward. 
		As for the second,
		if $e=(\phi,\psi^\op)\in A$ is an idempotent
		such that $e+e^\sigma=1$,
		then $\phi$ and $\psi$ are idempotents in $\End(V)$
		such that $\phi+\psi=\id_V$. Thus, $V=\im \phi\oplus \im \psi$.
		Since for all $x,y\in V$,
		we have
		$q_i(\psi x,\psi y)=q_i(x,\phi \psi y)=0$
		and $q_i(\phi x,\phi y)=q_i(\psi \phi x,y)=0$,
		this means that $\{q_i\}_{i\in I}$ is hyperbolic.
		Conversely, if $V=U\oplus W$
		and each $q_i$ vanishes on both $U$ and $W$,
		then $e:=(\id_U\oplus 0_W, (0_U\oplus \id_W)^\op)$
		is easily seen to be an idempotent in $A$
		satisfying $e+e^\sigma=1$.
	\end{proof}	
	
	Theorem~\ref{TH:main-I} can be regarded
	as a generalization of Pfister's local-global principle by means of part (i)
	of the following proposition,
	which is well-known when $q$ is unimodular.
	
	\begin{prp}\label{PR:adj-of-single-form}
		Let $(V,q)$ be a quadratic space and let
		$(A,\sigma)$ denote the   algebra-with-involution of adjoints  
		of $\{q\}$. Then:
		\begin{enumerate}[label=(\roman*)]
			\item $\sgn q_{A,\sigma}=(\sgn q)^2$.
			\item $\Cad  \{q\} =K q$.
		\end{enumerate}
	\end{prp}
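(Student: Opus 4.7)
The plan is to establish (i) and (ii) separately, in both cases reducing to the unimodular case via the decomposition $V = R \oplus V'$, where $R = \rad(q)$ and $V'$ is any complement; then $q \cong 0_R \oplus q'$ with $q' := q|_{V'}$ unimodular.

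For (i), a direct block computation from the defining identity $q(\psi x,y) = q(x,\phi y)$ shows that every $(\phi,\psi^\op) \in A$ has both $\phi$ and $\psi$ block-upper-triangular with respect to $V = R \oplus V'$, with the $V'V'$-blocks mutually $q'$-adjoint. This yields a surjection of $K$-algebras-with-involution
\[
A \twoheadrightarrow \bigl(\End(R)\times\End(R)^\op,\,\mathrm{swap}\bigr)\times \bigl(\End(V'),\,\ast\bigr),
\]
where $\ast$ is the $q'$-adjoint involution. Its kernel, consisting of pairs supported in the $RV'$-block, is square-zero and hence coincides with the Jacobson radical $J$ of $A$. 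Since signatures vanish trivially when $K$ admits no orderings, I may assume $\Char K = 0$ and apply Lemma~\ref{LM:trace-description} to express $q_{A,\sigma}$, modulo zero forms, as a positive-rational combination of the involution-trace forms of the two simple factors of $A/J$. The swap factor contributes a hyperbolic form (signature $0$); the factor $(\End(V'),\ast)$ yields the classical involution-trace form on $\End(V')$, whose signature equals $(\sgn q')^2 = (\sgn q)^2$ by the standard $2\times 2$ block computation over a real closure, diagonalizing $q' \cong \langle 1,\ldots,1,-1,\ldots,-1\rangle$.

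For (ii), the inclusion $Kq \subseteq \Cad\{q\}$ is immediate. For the converse, given $q_1 \in \Cad\{q\}$, I would first exhibit the $q$-adjoint pairs $(\phi,0^\op)$ with $\phi$ supported on the $RR$-block (these lie in $A$ since then $Q\phi = 0$ in matrix form); demanding $Q_1\phi = 0$ for all such $\phi$ forces both the $RR$- and $V'R$-blocks of $Q_1$ to vanish, and the $RV'$-block then vanishes by the symmetry of $Q_1$. Hence $q_1$ is supported on $V' \times V'$, and using $q$-adjoint pairs supported on the $V'V'$-block reduces the problem to the unimodular case: parameterizing the $q'$-adjoints by $\psi^\trans Q' = Q'\phi$ with arbitrary $\phi \in \End(V')$, the condition that $q_1|_{V'}$ share all $q'$-adjoints forces $(Q')^{-1} Q_1|_{V'}$ to commute with every element of $\End(V')$, so Schur's lemma yields $Q_1|_{V'} = c Q'$ for some $c \in K$, whence $q_1 = cq$. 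The main obstacle in both parts is bookkeeping rather than conceptual: one must verify carefully that the exhibited adjoint pairs indeed lie in $A$, and that the nilpotent kernel in (i) truly exhausts the Jacobson radical (which follows from the semisimplicity of the claimed quotient).
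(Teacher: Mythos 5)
Your proposal is correct, and the underlying structural idea is the same as the paper's: split $V=R\oplus V'$ with $R=\rad(q)$, observe that the adjoint pairs are block-triangular with a square-zero kernel supported on $\Hom(V',R)$, and reduce to the unimodular part $\End(V')$ plus a hyperbolic swap factor; part (ii) likewise kills the $R$-blocks with test adjoints and then applies Schur's lemma on $\End(V')$. Where you differ is in execution: the paper works entirely with an explicit diagonal Gram matrix $g=\mathrm{diag}(\alpha_1,\dots,\alpha_m)$, writes out $A$ in block form, and computes $q_{A,\sigma}\cong\bigoplus_{i,j}\langle\alpha_i\alpha_j^{-1}\rangle\oplus n^2\times\langle 1,-1\rangle\oplus 2nm\times\langle 0\rangle$ directly, whereas you identify $A/J$ abstractly and route (i) through Lemma~\ref{LM:trace-description}, then cite the known signature $(\sgn q')^2$ of the classical involution-trace form. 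Your route is cleaner conceptually and buys a shorter argument at the cost of invoking Lemma~\ref{LM:trace-description} (and its $\Char K=0$ hypothesis, which you correctly dispose of since the statement is vacuous otherwise); the paper's computation is self-contained and, as a by-product, exhibits the Witt class of $q_{A,\sigma}$ explicitly. Both are valid.
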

	
	\begin{proof}
		We may assume without loss of generality
		that $q=\langle\alpha_1,\dots,\alpha_m\rangle\oplus (n\times \langle 0\rangle)$
		with $\alpha_1,\dots,\alpha_m\in \units{K}$.
		Writing $g:=\mathrm{diag}(\alpha_1,\dots,\alpha_m)$
		and identifying $\End(V)=\End(K^{n+m})$
		with $\nMat{K}{n+m}$,
		straightforward computation now shows that
		\[
		A=\{([\begin{smallmatrix}
		a & 0 \\
		x & y
		\end{smallmatrix}],
		[\begin{smallmatrix}
		g^{-1}a^\trans g & 0 \\
		z & w		
		\end{smallmatrix}]^\op)
		\where
		a\in \nMat{K}{m},\, x,z\in \nMat{K}{n\times m},\, y,w\in\nMat{K}{n\times n} 
		\}
		\]
		
		(i) 		
		It is routine to check  
		that $q_{A,\sigma}\cong [\bigoplus_{i,j\in\{1,\dots,m\}}(\alpha_i\alpha_j^{-1})]\oplus
		[n^2\times \langle 1,-1\rangle]\oplus [2nm\times \langle 0\rangle]$.
		Thus, for every
		ordering $P$ of $K$,
		we have 
		\[\sgn_P q_{A,\sigma}=\sum_{i,j\in\{1,\dots,m\}}\sgn_P(\alpha_i)\sgn_P(\alpha_j)=
		\bigg(\sum_{i=1}^m\sgn(\alpha_i)\bigg)^2= (\sgn_P q)^2 .\]
		
		(ii) Let $w\in \Cad \{q\} $ and let $[\begin{smallmatrix}
		a & b \\
		c & d
		\end{smallmatrix}]$ denote
		its Gram matrix relative to the standard
		basis of $K^{m+n}$, where $a\in \nMat{K}{m}$.
		By the description of $A$ given above,
		we have $
		[\begin{smallmatrix}
		0 & 0 \\
		0 & I_n
		\end{smallmatrix}]		
		[\begin{smallmatrix}
		a & b \\
		c & d
		\end{smallmatrix}]
		=
		[\begin{smallmatrix}
		a & b \\
		c & d
		\end{smallmatrix}]
		[\begin{smallmatrix}
		0 & 0 \\
		0 & 0
		\end{smallmatrix}]$,
		$
		[\begin{smallmatrix}
		0 & 0 \\
		0 & 0
		\end{smallmatrix}]		
		[\begin{smallmatrix}
		a & b \\
		c & d
		\end{smallmatrix}]
		=
		[\begin{smallmatrix}
		a & b \\
		c & d
		\end{smallmatrix}]
		[\begin{smallmatrix}
		0 & 0 \\
		0 & I_n
		\end{smallmatrix}]$
		and
		$
		[\begin{smallmatrix}
		g^{-1}x^\trans g & 0 \\
		0 & 0
		\end{smallmatrix}]^\trans		
		[\begin{smallmatrix}
		a & b \\
		c & d
		\end{smallmatrix}]
		=
		[\begin{smallmatrix}
		a & b \\
		c & d
		\end{smallmatrix}]
		[\begin{smallmatrix}
		x & 0 \\
		0 & 0
		\end{smallmatrix}]$
		for all $x\in \nMat{K}{m}$.
		These equalities
		imply that $b=0$, $c=0$, $d=0$
		and $g^{-1}a\in \Cent(\nMat{K}{m})$.
		Consequently, $w=\alpha q$ for some $\alpha\in K$.
	\end{proof}

\section{Hermitian Categories}
\label{sec:herm-cat}

	We recall some facts about hermitian categories
	that will be needed for the proof of Theorem~\ref{TH:main-II} in the
	next section. 
	We refer the reader to
	\cite[\S2]{Bayer_2014_hermitian_categories} for the relevant definitions,
	and to \cite[\S7]{Scharlau_1985_quadratic_and_hermitian_forms}
	or \cite[Chapter~III]{Knus_1991_quadratic_hermitian_forms}
	for an extensive treatment.

	All categories are tacitly assumed
	to be skeletally small. The composition symbol ``$\circ$''
	will often be suppressed in formulas.

\medskip

	Let $\catC=(\catC,*,\omega)$ be a hermitian
	category.
	The category of unimodular $1$-hermitian spaces over $\catC$
	is denoted $\Herm{\catC}$.

	Given a set $I$, an $I$-indexed
	system of hermitian forms over $\catC$ is a pair  $(C,\{h_i\}_{i\in I})$
	consisting of an object $C\in \catC$
	and a collection $\{h_i\}_{i\in I}$ of (possibly non-unimodular) $1$-hermitian forms
	on $C$. An isometry
	from $(C,\{h_i\}_{i\in I})$ to another
	$I$-indexed system
	$(C',\{h'_i\}_{i\in I})$
	is an isomorphism $f:C\to C'$
	such that $f^*h'_if=h_i$ for all $i\in I$.
	The category of $I$-indexed
	systems of hermitian forms with isometries as morphisms 
	is denoted
	$\Sys_I(\catC)$.

	Define 
	the category $\tAr{\catC}{I}$ as follows:
	\begin{itemize}
		\item 
		Objects are triples $(U,V,\{f_i\}_{i\in I})$
		with $U,V\in \catC$
		and $\{f_i\}_{i\in I}\subseteq\Hom_{\catC}(U,V^*)$.
		\item 
		Morphisms from $(U,V,\{f_i\}_{i\in I})$
		to $(U',V',\{f'_i\}_{i\in I})$
		are formal
		symbols   $(\phi,\psi^\op)$ such
		that $\phi\in \Hom_{\catC}(U,U')$, $\psi\in\Hom_{\catC}(V',V)$
		and $\psi^*f_i=f_i\phi$ for all $i\in I$.
		\item
		Composition is defined by $(\phi,\psi^\op)\circ (\phi',\psi'^\op):=
		(\phi  \phi',(\psi'  \psi)^\op)$.
	\end{itemize}
	We call  $\tAr{\catC}{I}$ the category
	of \emph{twisted arrows} over $\catC$
	and make it into a hermitian category
	by setting 
	\begin{align*}
	&(U,V,\{f_i\}_{i\in I})^*=(V,U,\{f_i^*\circ \omega_U\}_{i\in I}),\\
	&(\phi,\psi^\op)^*=(\psi,\phi^\op),   \\
	&\omega_{(U,V,\{f_i\})}=(\id_U,\id_V^\op).
	\end{align*}
	Note that  if $h=(\phi,\psi^\op)$
	is a hermitian form on $Z\in \tAr{\catC}{I}$,
	then $\phi=\psi$, because $h=h^*\omega_Z$.
	
	We alert the reader
	that  $\tAr{\catC}{I}$ is not the category
	of twisted \emph{double} $I$-arrows defined in \cite[\S4]{Bayer_2014_hermitian_categories}
	and denoted
	$\tAr{\catC}{2I}$. Rather, $(U,V,\{f_i\})\mapsto (U,V,\{f_i\},\{f_i\})$
	identifies $\tAr{\catC}{I}$ as a full hermitian subcategory
	of $\tAr{\catC}{2I}$.

\medskip
	
	The following theorem is a variation of \cite[Theorem~4.1]{Bayer_2014_hermitian_categories}
	
	\begin{thm}\label{TH:equiv-syst-to-single}
		Define   $F: \Sys_I(\catC)\to \Herm{\tAr{\catC}{I}}$ 
		and $G:\Herm{\tAr{\catC}{I}}\to \Sys_I(\catC)$  by
		\begin{align*}
		&F(C,\{h_i\})=((C,C,\{h_i\}),(\id_C,\id_C^\op)),&  &F(\phi)=(\phi,(\phi^{-1})^\op).\\
		&G((U,V,\{f_i\}),(\alpha,\alpha^\op))=(U,\{\alpha^* f_i\}_{i\in I}), &
		&G(\phi,\psi^\op)=\phi.
		\end{align*}
		Then $F$ and $G$ are well-defined
		functors which are mutually inverse.
		Moreover $F$ and $G$ respect orthogonal sums
		and preserve hyperbolicity.
	\end{thm}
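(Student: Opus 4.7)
The plan is to verify, in order, four assertions: (i) $F$ is a well-defined functor; (ii) $G$ is a well-defined functor; (iii) $G\circ F = \id$ on the nose and $F\circ G$ is naturally isomorphic to $\id$, so $F$ and $G$ form a quasi-inverse equivalence; and (iv) both functors commute with orthogonal sums and preserve hyperbolicity.

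For (i), the pair $(\id_C,\id_C^\op)$ is a morphism $(C,C,\{h_i\})\to(C,C,\{h_i\})^*$ in $\tAr{\catC}{I}$ precisely because each $h_i$ satisfies the hermitian condition $h_i = h_i^*\omega$; the formula $(\phi,\psi^\op)^* = (\psi,\phi^\op)$ makes it self-dual, and both components being identities make it unimodular. On morphisms, an isometry $\phi\colon(C,\{h_i\})\to(C',\{h_i'\})$ is by definition an isomorphism satisfying $\phi^* h_i'\phi = h_i$, from which one checks that $(\phi,(\phi^{-1})^\op)$ is both a morphism in $\tAr{\catC}{I}$ and an isometry of the resulting hermitian spaces. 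For (ii), given a unimodular hermitian space $((U,V,\{f_i\}),(\alpha,\alpha^\op))$, unimodularity of $(\alpha,\alpha^\op)$ forces $\alpha\colon U\to V$ to be an isomorphism, while the twisted-arrow morphism condition reads $\alpha^* f_i = f_i^*\omega\,\alpha$, which is exactly the hermitian condition for $\alpha^* f_i\colon U\to U^*$. Functoriality of $G$ on a morphism $(\phi,\psi^\op)$ follows by combining $\psi^* f_i = f_i'\phi$ (the twisted-arrow morphism condition) with $\alpha = \psi\alpha'\phi$ (the isometry condition), which together give $\phi^*(\alpha'^* f_i')\phi = \alpha^* f_i$.

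For (iii), the identity $G\circ F = \id$ is immediate from $\id_C^* h_i = h_i$. For $F\circ G$, the natural transformation $\eta\colon\id\Rightarrow F\circ G$ is given on an object $((U,V,\{f_i\}),(\alpha,\alpha^\op))$ by $\eta = (\id_U,\alpha^\op)\colon(U,V,\{f_i\})\to(U,U,\{\alpha^* f_i\})$. Its components are invertible, so it is an isomorphism in $\tAr{\catC}{I}$; a direct computation yields $(\id_U,\alpha^\op)^*(\id_U,\id_U^\op)(\id_U,\alpha^\op) = (\alpha,\alpha^\op)$, showing $\eta$ is an isometry of hermitian spaces; and naturality in a morphism $(\phi,\psi^\op)$ reduces once more to the identity $\alpha = \psi\alpha'\phi$. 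Compatibility with orthogonal sums is immediate from the explicit formulas, since direct sums in both $\Sys_I(\catC)$ and $\tAr{\catC}{I}$ form componentwise on the underlying objects.

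The main obstacle, the one step of real content, is preservation of hyperbolicity. If $(V,\{q_i\})$ is hyperbolic with $V = U\oplus W$ and each $q_i$ vanishing on $U$ and on $W$, then in the resulting block decomposition each $q_i$ is determined by a single cross term $g_i\colon U\to W^*$ together with its hermitian conjugate $g_i^*\omega\colon W\to U^*$. Setting $X = (U,W,\{g_i\})$ one verifies $X^* = (W,U,\{g_i^*\omega\})$, and the swap on $V = U\oplus W$ assembles into an isometry of $F(V,\{q_i\})$ onto $X\oplus X^*$ equipped with its standard hyperbolic form, exhibiting $F(V,\{q_i\})$ as hyperbolic. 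Conversely, any identification of $F(V,\{q_i\})$ with a hyperbolic space on $X'\oplus (X')^*$ for $X' = (U_0,V_0,\{g_i'\})$ pulls the tautological Lagrangian decomposition back to a splitting $V\cong U_0\oplus V_0$ on which each $q_i$ has vanishing diagonal blocks, so $(V,\{q_i\})$ is hyperbolic. Preservation of hyperbolicity by $G$ then follows from the same analysis combined with the natural isomorphism $F\circ G\cong\id$.
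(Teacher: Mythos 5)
Your proof is correct, but it takes a genuinely different route from the paper's. The paper observes that $\tAr{\catC}{I}$ embeds as a full hermitian subcategory of the twisted double $I$-arrows category $\tAr{\catC}{2I}$ of \cite[\S4]{Bayer_2014_hermitian_categories}, and then simply defers to \cite[Theorem~4.1, Proposition~4.2]{Bayer_2014_hermitian_categories}, whose proof carries over verbatim. You instead verify everything directly from the definitions: well-definedness and functoriality of $F$ and $G$, the identity $G\circ F=\id$, the natural isomorphism $\eta=(\id_U,\alpha^\op)\colon\id\Rightarrow F\circ G$, compatibility with orthogonal sums, and preservation of hyperbolicity via the decomposition $F(V,\{q_i\})\cong X\oplus X^*$ with $X=(U,W,\{g_i\})$ when $V=U\oplus W$ is a hyperbolic splitting. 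The paper's route is shorter and leverages existing machinery; yours is self-contained and has the pedagogical benefit of exhibiting the natural isomorphism and the hyperbolic identification explicitly.

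Two small remarks. First, for the converse direction of hyperbolicity you appeal somewhat loosely to ``pulling back the tautological Lagrangian decomposition''; a cleaner way is the idempotent characterization: a unimodular hermitian space $(Z,h)$ is hyperbolic if and only if $\End(Z)$ contains an idempotent $e$ with $e+h^{-1}e^*h=\id_Z$, and under $F$ this $e=(\phi,\psi^\op)$ has $\phi,\psi$ complementary idempotents on $V$ with $\im\phi,\im\psi$ totally isotropic for all $q_i$ --- exactly the argument the paper later spells out in the proof of Theorem~\ref{TH:main-I}. Second, the paper's displayed formula for the duality, $(U,V,\{f_i\})^*=(V,U,\{f_i^*\circ\omega_U\})$, contains a typo ($\omega_U$ should read $\omega_V$ so that $f_i^*\circ\omega_V\in\Hom(V,U^*)$); your computation implicitly uses the corrected version, which is the right one.
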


	\begin{proof}
		We already
		observed that $\tAr{\catC}{I}$
		is a full subcategory of $\tAr{\catC}{2I}$.
		With this observation at hand,
		the proof is the same as the
		proof of   \cite[Theorem~4.1, Proposition~4.2]{Bayer_2014_hermitian_categories}.
	\end{proof}	
	
	\begin{remark}\label{RM:adjoint-ring-desc}
		Take $\catC$ to be the category
		of finite-dimensional $K$-vector spaces
		with the usual duality
		$V^*=\Hom_K(V,K)$,
		let $(V,\{q_i\}_{i\in I})$ be a system of quadratic
		forms over $\catC$, 
		and let $(Z,h)=F(V,\{q_i\}_{i\in I})$.
		Then $\End_{\tAr{\catC}{I}}(Z)$
		is precisely the algebra of adjoints  $A=A(\{q_i\}_{i\in I})$,
		and the canonical involution $\sigma:A\to A$
		coincides with the involution $f\mapsto h^{-1}f^*h$
		on $\End(Z)$. The
		information that $(A,\sigma)$ carries on the system $\{q_i\}_{i\in I}$
		is therefore a manifestation of Theorem~\ref{TH:equiv-syst-to-single}.	
	\end{remark}
	
	We call the hermitian category $\catC$ a \emph{finite hermitian	$K$-category}
	if the $\Hom$-groups in $\catC$ are finite dimensional $K$-vector spaces,
	composition is $K$-bilinear and $*:\catC\to \catC$ is $K$-linear on $\Hom$-groups.
	Recall also that $\catC$ is \emph{pseudo-abelian} if every
	idempotent morphism has a kernel.
	For example, both properties
	are satisfied by the category of finite-dimensional
	$K$-vector spaces with the usual duality
	$V^*=\Hom_K(V,K)$. In addition, if $\catC$
	satisfies either property, then so does $\tAr{\catC}{I}$.

	Suppose henceforth that $\catC$ is a semi-abelian finite hermitian $K$-category,
	and fix a collection  $\catZ$ of objects
	in $\catC$
	such that every indecomposable object $C\in \catC$
	satisfies $C\cong Z$ or $C\cong Z^*$ for unique $Z\in \catZ$.
	Given $Z\in\catZ$, a hermitian space $(C,h)\in \Herm{\catC}$
	is said to be of type $Z$ if $C$ is isomorphic
	to  a summand of $(Z\oplus Z^*)^n$ for some $n\in\N$.
	The following theorems are due
	to Quebbemann, Scharlau and Schulte  \cite[Theorems~3.2, 3.3]{Quebbemann_1979_hermitian_categories}.
	
	\begin{thm}\label{TH:QSSi}
		Every $(C,h)\in \Herm{\catC}$
		admits a factorization
		\[
		(C,h)\in \bigoplus_{Z\in\catZ}(C_Z,h_Z)
		\]
		in which $(C_Z,h_Z)$ 
		is a unimodular hermitian space of type $Z$
		and $C_Z=0$ for all but finitely many $Z\in \catZ$.
		The factors $(C_Z,h_Z)$ are uniquely determined up to isometry. 
	\end{thm}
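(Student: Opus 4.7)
The plan is to convert the decomposition problem for $(C,h)\in\Herm{\catC}$ into a question about idempotents in the endomorphism algebra with involution, then to appeal to the structure theory of Section~\ref{sec:algs-with-inv}. Set $A=\End_\catC(C)$ and let $\sigma\colon A\to A$ be the adjoint involution $f\mapsto h^{-1}f^*h$; this is a $K$-linear involution on a finite-dimensional $K$-algebra, because $\catC$ is a finite hermitian $K$-category and $h$ is unimodular. Every $\sigma$-stable idempotent $e\in A$ yields an orthogonal summand $eC$ of $C$, since $h(ex,(1-e)y)=h(x,e^\sigma(1-e)y)=h(x,e(1-e)y)=0$, and the restriction $h|_{eC}$ is automatically unimodular.

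For existence, I would pass to the semisimple quotient $\quo A=A/\Jac(A)$ with its induced involution $\quo\sigma$, and decompose $\quo A$ into simple factors. The involution $\quo\sigma$ permutes the central primitive idempotents, so each orbit yields a $\quo\sigma$-stable central idempotent of $\quo A$. By Lemma~\ref{LM:reduction-mod-nilpotent} these idempotents lift to a family $\{e_Z\}$ of pairwise orthogonal $\sigma$-stable idempotents of $A$ summing to $1$, and the first paragraph then provides the orthogonal decomposition $(C,h)=\bigoplus_Z(C_Z,h_Z)$ with $C_Z=e_ZC$. To identify the type of each $C_Z$, I would invoke Krull--Schmidt in $\catC$, which holds because $\catC$ is pseudo-abelian with finite-dimensional endomorphism rings, hence indecomposables have local endomorphism rings. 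If the underlying $\quo\sigma$-orbit consists of a single $\quo\sigma$-stable factor, then all indecomposable summands of $C_Z$ are isomorphic to a single self-dual indecomposable, which determines an element $Z\in\catZ$ with $C_Z$ a summand of $Z^n$, hence of $(Z\oplus Z^*)^n$. If the orbit consists of two factors swapped by $\quo\sigma$, then the summands are copies of some $W$ and $W^*$ with $W\not\cong W^*$, and choosing $Z\in\catZ$ to represent this pair exhibits $C_Z$ as a summand of $(Z\oplus Z^*)^n$.

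For uniqueness, I would argue that any decomposition of the stated form arises from a family of pairwise orthogonal $\sigma$-stable idempotents in $A$ whose reductions modulo $\Jac(A)$ are exactly the central idempotents cut out by the $\quo\sigma$-orbits on the simple factors of $\quo A$; the latter are intrinsic to $(A,\sigma)$. Given a second such decomposition $(C,h)=\bigoplus_Z(C'_Z,h'_Z)$, the two families of idempotents differ within each orbit by conjugation in $1+\Jac(A)$. Refining the proof of Lemma~\ref{LM:reduction-mod-nilpotent} to show that the conjugating element can be chosen $\sigma$-symmetric (using that $2\in\units{A}$ to symmetrize) then converts the underlying Krull--Schmidt isomorphism into an isometry $(C_Z,h_Z)\cong(C'_Z,h'_Z)$.

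The step I expect to be the main obstacle is this last one: promoting an isomorphism of the underlying objects within a type $Z$ to an actual isometry of hermitian forms amounts to a Witt cancellation statement in the hermitian subcategory spanned by the summands of $(Z\oplus Z^*)^n$. Once this cancellation is established, by verifying that the endomorphism ring of an indecomposable in that subcategory is local and running the standard cancellation argument over a local ring with involution, the remainder of the proof is bookkeeping.
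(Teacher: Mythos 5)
The paper does not actually prove this theorem: it is cited verbatim from Quebbemann--Scharlau--Schulte, so there is no in-paper argument to compare against. That said, your plan is a reasonable reconstruction of the ideas behind the QSS result (reduce to idempotents in the endomorphism ring with its adjoint involution, pass to the semisimple quotient, lift, then use Krull--Schmidt and cancellation). Two points, however, need attention.

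First, the appeal to Lemma~\ref{LM:reduction-mod-nilpotent} is misplaced. That lemma lifts an idempotent $\veps$ of $R/J$ satisfying $\veps^{\quo\sigma}+\veps=1$ to an idempotent $e$ of $R$ with $e^\sigma+e=1$; it says nothing about lifting $\quo\sigma$-\emph{fixed} idempotents, let alone a pairwise-orthogonal family of them summing to~$1$. The statement you actually need — that a complete system of pairwise orthogonal $\quo\sigma$-stable idempotents of $A/\Jac(A)$ lifts to a complete system of pairwise orthogonal $\sigma$-stable idempotents of $A$ — is true (e.g.\ lift first to idempotents, then symmetrize using $2\in\units{A}$ and run a Newton-type correction respecting $\sigma$), but it requires its own argument and cannot be quoted from Lemma~\ref{LM:reduction-mod-nilpotent}.

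Second, you correctly flag uniqueness as the real content, and your sketch (Witt cancellation over a hermitian category whose indecomposables have local endomorphism rings) is the right direction, but as written it remains a sketch. In particular, the claim that the conjugating element between two systems of $\sigma$-stable idempotents over each $\quo\sigma$-orbit can be chosen $\sigma$-symmetric is not obvious and would need to be proved; the cleaner route (and the one QSS take) is to transfer to $\veps$-hermitian forms over the semiperfect ring $\quo E$ via a functor of the type appearing in Theorem~\ref{TH:QSSii} and invoke cancellation there. Until that is carried out, uniqueness is not established.
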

	
	\begin{thm}\label{TH:QSSii}
		Let $Z\in \catZ$, let $\veps\in \{\pm 1\}$
		and let $h:Z\to Z$ be a unimodular $\veps$-hermitian form.
		Write $E=\End_{\catC}(Z)$ and $\quo{E}=E/\Jac E$,
		let $\sigma:E\to E$ be given by $\phi^\sigma =h^{-1}\phi^*h$
		and let $\quo{\sigma}:\quo{E}\to \quo{E}$
		be given by $(\quo{\phi})^{\quo{\sigma}}=\quo{\phi^\sigma}$.
		Then there is a functor $T$ from the category
		of unimodular $1$-hermitian spaces of type $Z$ over $\catC$
		to the category
		of unimodular $\veps$-hermitian spaces over $(\quo{E},\quo{\sigma})$
		having the following properties:
		\begin{enumerate}[label=(\roman*)]
			\item $T$ induces   a bijection on   isomorphism classes.
			\item $T$ respects orthogonal sums and hyperbolicity.
			\item $T$ maps $1$-hermitian spaces over $\catC$
			with underlying object  $Z^n$ to
			$\veps$-hermitian spaces over $(\quo{E},\quo{\sigma})$
			with underlying module $\quo{E}^n$.
		\end{enumerate}
	\end{thm}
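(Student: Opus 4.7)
The plan is to carry out a hermitian Morita-style construction. For $(C,g)\in\Herm{\catC}$ of type $Z$, I would set $M(C):=\Hom_\catC(Z,C)$, viewed as a right $E$-module via precomposition, and define the sesquilinear pairing
\[
\tilde g(f_1,f_2)\;=\;h^{-1}\circ f_1^*\circ g\circ f_2\;\in\;\End_\catC(Z)=E.
\]
A routine computation using $h^*\omega_Z=\veps h$, naturality of $\omega$, and the definition $\phi^\sigma=h^{-1}\phi^*h$ shows that $\tilde g$ is $\veps$-hermitian over $(E,\sigma)$. Since $Z$ is indecomposable in a pseudo-abelian category with finite-dimensional hom spaces, the Azumaya--Krull--Schmidt theorem makes $E$ a (noncommutative) local ring, so $\Jac E$ is its unique maximal two-sided ideal and is automatically $\sigma$-stable; moreover $\Jac E$ is nilpotent, since $E$ is a finite-dimensional $K$-algebra. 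Hence $\tilde g$ descends to a well-defined $\veps$-hermitian form $\quo g$ on the right $\quo E$-module $M(C)/M(C)\Jac E$, and I define $T(C,g):=(M(C)/M(C)\Jac E,\quo g)$, with the action on morphisms induced by post-composition.

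The existence of the unimodular $h$ forces $Z\cong Z^*$, so by Krull--Schmidt every object of type $Z$ is isomorphic to $Z^n$ for some $n\in\N$. With this reduction in hand, property~(iii) is immediate from $M(Z^n)=E^n$, while property~(ii) is routine: $M(C\oplus C')=M(C)\oplus M(C')$ yields compatibility with orthogonal sums, and if $C=U\oplus U'$ witnesses hyperbolicity of $(C,g)$ then $M(U)$ and $M(U')$ descend to complementary totally isotropic $\quo E$-submodules of $T(C,g)$ of equal rank.

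The main content, and the step I expect to be the main obstacle, is the bijection of isomorphism classes in~(i). After reducing to $C=Z^n$, unimodular $\veps$-hermitian forms on $Z^n$ in $\catC$ are parametrized, via $\Hom_\catC(Z^n,(Z^n)^*)\cong\nMat{E}{n}$ (sending $f\mapsto h^{-1}f$ entrywise), by invertible $\veps$-hermitian matrices in $(\nMat{E}{n},n\times\sigma)$, and likewise unimodular $\veps$-hermitian forms on $\quo E^n$ correspond to invertible $\veps$-hermitian matrices in $(\nMat{\quo E}{n},n\times\quo\sigma)$; under these identifications $T$ becomes entrywise reduction modulo $\Jac E$. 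Essential surjectivity then reduces to lifting any invertible $\veps$-hermitian $\quo A\in\nMat{\quo E}{n}$ to an invertible $\veps$-hermitian $A\in\nMat{E}{n}$: I would lift entries arbitrarily, symmetrize via $\tfrac{1}{2}(A+\veps A^{n\times\sigma})$ (legitimate because $\Char K\neq 2$), and deduce invertibility from $\Jac\nMat{E}{n}=\nMat{\Jac E}{n}$. Injectivity reduces to lifting an isometry $\quo P\in\nGL{\quo E}{n}$ between two such $\quo A_1,\quo A_2$ to an honest isometry $P\in\nGL{E}{n}$, which I would carry out by a Hensel-type successive approximation inside $I_n+\nMat{\Jac E}{n}$; this terminates because $\Jac E$ is nilpotent. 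These two lifting arguments form the technical heart of the proof.
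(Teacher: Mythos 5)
This theorem is not proved in the paper: it is cited directly to Quebbemann, Scharlau and Schulte \cite[Theorems~3.2, 3.3]{Quebbemann_1979_hermitian_categories}, so there is no in-text argument to compare against. Your proposal is, in outline, exactly the standard transfer (hermitian Morita) construction used by QSS and by Knus--Scharlau-style treatments: pass to $M(C)=\Hom_\catC(Z,C)$ as a right $E$-module with the pairing $\tilde g(f_1,f_2)=h^{-1}f_1^*gf_2$, observe that $E$ is local and Artinian so $\Jac E$ is nilpotent and $\sigma$-stable, reduce modulo $\Jac E$, and then handle essential surjectivity and essential injectivity by lifting hermitian matrices and isometries across the nilpotent radical. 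The sign bookkeeping works out: using $h^*\omega_Z=\veps h$ and naturality of $\omega$ one gets $\tilde g(f_2,f_1)^\sigma=\veps\,\tilde g(f_1,f_2)$, so $\tilde g$ is indeed $\veps$-hermitian, and the symmetrization $\tfrac12(A+\veps A^{n\times\sigma})$ together with $\Jac\nMat{E}{n}=\nMat{\Jac E}{n}$ gives surjectivity, while the Hensel step for isometries (solving $X^\sigma A+AX=N$ by $X=\tfrac12 A^{-1}N$, using $\Char K\neq 2$ and nilpotence of $\Jac E$) gives injectivity. Your reduction to underlying object $Z^n$ via $Z\cong Z^*$ and Krull--Schmidt is legitimate, and (ii) follows from (i) plus the computation on hyperbolic spaces. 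In short: the proof is correct and is essentially the one in the cited source; it is a reasonable thing to have reproduced here rather than a genuinely different route.

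One small presentational remark: for the injectivity lifting you should make explicit that you first lift the mod-$\Jac E$ isometry $\quo P$ \emph{arbitrarily} to some $P_0\in\nGL{E}{n}$, replace $A_1$ by $P_0^{n\times\sigma}A_1P_0$, and only then run the Hensel iteration inside $I_n+\nMat{\Jac E}{n}$; as written, ``lifting an isometry\ldots\ by successive approximation inside $I_n+\nMat{\Jac E}{n}$'' slightly conflates these two steps.
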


\section{Proof of Theorem~\ref{TH:main-II}}
\label{sec:pairs}

	We finally   prove Theorem~\ref{TH:main-II}.
	In fact, we shall prove a more general result. 
	To state it,   we introduce the following condition on the  field $K$:
	\begin{enumerate}[label=(E)]
		\item \label{item:condition-on-K} 
		For every ordering $P$ of $K$ and every two disjoint finite
		subsets $S,T\subseteq K_P$, there exists $f\in K[X]$
		such that $f(s)>_P0$ for all $s\in S$ and $f(t)<_P 0$ for all $t\in T$.
	\end{enumerate}
	Fields satisfying \ref{item:condition-on-K} include all real closed fields
	and fields  which are dense in their real closure relative
	to each of their orderings, 
	e.g.\ number fields. (Indeed,
	take $f$ to be an approximation of
	an appropriate  interpolation polynomial in $K_P[X]$.)
	We conjecture that \ref{item:condition-on-K} holds for all fields.
	
\medskip
	
	Theorem~\ref{TH:main-II} is a special case of:
	
	\begin{thm}\label{TH:answer-to-Qtwo}
		Suppose that $K$ satisfies condition \ref{item:condition-on-K},
		e.g.,  $K$ is  a number field or a real-closed field.
		Let $(V,\{q_i\}_{i=1,2})$ be a  $K$-vector space with a nonsingular
		pair of quadratic forms.
		Then the following conditions are equivalent:
		\begin{enumerate}[label=(\alph*)]
			\item $n\times \{q_i\}_{i=1,2}$ is hyperbolic form some $n\in \N$.
			\item Every   $q\in \Cad \{ q_1,q_2 \} $  
			has signature $0$.
		\end{enumerate}
	\end{thm}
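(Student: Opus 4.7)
For (a) $\Rightarrow$ (b), the plan is short: if $n\times\{q_1,q_2\}$ is hyperbolic and $q\in\Cad\{q_1,q_2\}$, then the extended system $\{q_1,q_2,q\}$ has the same algebra of adjoints and canonical involution $(A,\sigma)$ as $\{q_1,q_2\}$ (this is the very definition of $\Cad$). A second application of Theorem~\ref{TH:main-I} yields some $n'$ with $n'\times\{q_1,q_2,q\}$ hyperbolic; in particular $n'\times q$ is hyperbolic, forcing $\sgn q=0$ by Proposition~\ref{PR:Pfister-for-non-unimod}(ii).

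For the converse (b) $\Rightarrow$ (a), I would argue by contrapositive. Assume no $n\times\{q_1,q_2\}$ is hyperbolic. By Theorem~\ref{TH:main-I}, there is an ordering $P$ of $K$ with $\sgn_P q_{A,\sigma}\neq 0$. The algebra of adjoints and $\Cad$ both commute with base change (they are cut out by polynomial/linear equations in the data $q_1,q_2$), so the analogous failure holds over $K_P$ and $\Cad\{q_1,q_2\}\otimes_K K_P=\Cad\{(q_1)_{K_P},(q_2)_{K_P}\}$. Assuming the theorem over real closed fields, we therefore obtain a witness $q'\in\Cad\{q_1,q_2\}\otimes_K K_P$ with $\sgn q'\neq 0$. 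Writing $q'=\sum_i c_iw_i$ in a $K$-basis $(w_i)_i$ of $\Cad\{q_1,q_2\}$, the sign conditions that certify $\sgn q'\neq 0$ on a suitable finite family of test vectors become sign conditions on the coefficients $(b_i)\in K^d$ of a candidate form $q=\sum_ib_iw_i$; condition~\ref{item:condition-on-K} is then used to produce such $(b_i)\in K^d$, and lower-semicontinuity of the absolute signature yields $\sgn_P q\neq 0$, contradicting~(b). This reduces the theorem to the case of a real closed base field.

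Now assume $K$ is real closed. By Lemma~\ref{LM:nonsingular-systems} and nonsingularity, we may replace $q_1$ by a unimodular form in $\Span_K\{q_1,q_2\}$, so without loss of generality $q_1$ is unimodular; then $T:=q_1^{-1}q_2\in\End(V)$ is $q_1$-self-adjoint, and $(V,\{q_1,q_2\})$ is equivalent to the polarized triple $(V,q_1,T)$. Passing to the hermitian category $\catC:=\tAr{\catV}{\{1,2\}}$ (with $\catV$ the category of finite-dimensional $K$-vector spaces), Theorem~\ref{TH:equiv-syst-to-single} encodes $(V,\{q_1,q_2\})$ as a single unimodular hermitian space $(Z,h)\in\Herm{\catC}$, and Theorems~\ref{TH:QSSi}--\ref{TH:QSSii} decompose $(Z,h)$ into indecomposable summands $(Y_\alpha,h_\alpha)$ indexed by types $Z_\alpha\in\catZ$, each corresponding to a unimodular $\veps_\alpha$-hermitian form over a simple $K$-algebra with involution $(\quo{E}_\alpha,\quo{\sigma}_\alpha)$ arising from $\End_{\catC}(Z_\alpha)$. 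By hypothesis, some $(Y_\alpha,h_\alpha)$ is not stably hyperbolic, and Theorem~\ref{TH:trace-general} applied to $(\quo{E}_\alpha,\quo{\sigma}_\alpha)$ gives $\sgn q_{\quo{E}_\alpha,\quo{\sigma}_\alpha}\neq 0$.

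The main obstacle is converting this nonzero ``internal'' signature on $(\quo{E}_\alpha,\quo{\sigma}_\alpha)$ into an explicit element of $\Cad\{q_1,q_2\}$ of nonzero signature. Via the identification $A\cong \End_\catC(Z)$ of Remark~\ref{RM:adjoint-ring-desc}, the QSS decomposition block-diagonalizes $A$ and hence $\Cad\{q_1,q_2\}$; concretely, for $q_1$ unimodular one has $\Cad\{q_1,q_2\}=q_1\cdot K[T]$, and the signature of $q_1p(T)$ is a weighted sum of the signs of $p$ at the spectral data of $T$, with weights given by $q_1$-signature contributions of the indecomposable summands. I would choose a polynomial $p\in K[X]$ that isolates the non-hyperbolic summand indexed by $\alpha$ (taking $p$ to prescribe signs on the spectrum of $T$), verify via Theorem~\ref{TH:QSSii} that the trace sign on $\quo{E}_\alpha$ lifts to a nonzero contribution to $\sgn(q_1p(T))$, and conclude that $q:=q_1p(T)\in\Cad\{q_1,q_2\}$ has $\sgn q\neq 0$. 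Combined with the $K_P\to K$ descent via~\ref{item:condition-on-K}, this completes the proof. The delicate point is cleanly matching the trace-signature on $(\quo{E}_\alpha,\quo{\sigma}_\alpha)$ with the signature of the lifted form on $V$ through the indecomposable classification, which I expect to require a careful analysis of the Jordan/Kronecker block structure over real closed $K$.
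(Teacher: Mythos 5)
Your (a)$\Rightarrow$(b) direction is fine and matches the paper. The (b)$\Rightarrow$(a) direction, however, has two genuine gaps.

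First, your global strategy is to prove the theorem over a real closed field and then descend from $K_P$ to $K$ using condition~\ref{item:condition-on-K}. The paper never argues this way, and for good reason: condition~\ref{item:condition-on-K} only gives a single polynomial $f\in K[X]$ with prescribed signs at a finite set of points of $K_P$; it does not give a way to approximate an arbitrary $d$-tuple $(c_1,\dots,c_d)\in K_P^d$ of coefficients of a witness form $q'=\sum_i c_iw_i$ by a $d$-tuple in $K^d$ preserving the relevant sign data ($K$ need not be dense in $K_P$). Your appeal to ``lower semicontinuity of signature'' is therefore not supported by~\ref{item:condition-on-K}. The paper sidesteps this entirely: it builds its witness forms $z_m=q_1\,p^{k-1}(J)\,g^N(J)\,f_m(J)$ directly over $K$, by choosing $f_m\in K[X]$ via~\ref{item:condition-on-K} so that the signs of $f_m$ at the real roots $\alpha_i\in K_P$ of the characteristic polynomial of $J=q_1^{-1}q_2$ are as needed, and then computes $\sgn_P z_m$ over $K_P$. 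The special algebraic shape $q_1\cdot(\text{polynomial in }J)$ with $K$-coefficients is exactly what lets condition~\ref{item:condition-on-K} apply; your plan loses that shape by trying to approximate an arbitrary element of $\Cad\{q_1,q_2\}\otimes_K K_P$.

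Second, even over a real closed field, ``choosing a polynomial $p$ that isolates the non-hyperbolic summand'' is not possible: a form $q_1p(T)$ necessarily picks up contributions from \emph{every} indecomposable block $Z(A_{p',n})$, in particular from all Jordan blocks of the same prime $p'$ at different exponents $n$, and no choice of polynomial kills all but one. The paper resolves this with a decreasing induction on the exponent $k$: the factor $p^{k-1}(J)$ annihilates the blocks with $n<k$, the factor $g^N(J)$ annihilates the blocks with a different prime, and for the blocks with $n>k$ it invokes the induction hypothesis together with Lemma~\ref{LM:J-properties}(iii) to see that their contribution to $\sgn_P z_m$ vanishes. This decreasing induction is a key missing idea in your argument; without it the ``delicate point'' you flag at the end (matching the trace-signature of $\quo{E}_\alpha$ with a contribution to $\sgn(q_1p(T))$) cannot be cleanly carried out. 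Relatedly, the paper does not invoke Theorem~\ref{TH:trace-general} on $\quo{E}_\alpha$ at this stage; it instead uses the explicit normal forms of Lemmas~\ref{LM:Z-over-R-structure} and~\ref{LM:quad-forms-on-Z-real-case} to read off signatures directly, which makes the required matching concrete.
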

	
	For the remainder of this section, 
	we adopt the convention of Section~\ref{sec:examples} in 	
	which quadratic forms on $K^n$ are identified with their Gram matrix relative
	to the standard basis.
	In addition, we  let
	$\catC$ denote
	the category whose   objects  are
	the $K$-vector spaces $\{K^0,K^1,K^2,\dots \}$,
	its morphisms are given by $\Hom(K^n,K^m)=\nMat{K}{m\times n}$, and its composition is matrix
	product. We make $\catC$
	into a hermitian category
	by letting  $*$ fix all objects
	and act  as  the matrix transpose on morphisms, and setting $\omega=\id$.
	Write $\Sys_2(K)=\Sys_{\{1,2\}}(\catC)$. Then, under our conventions,
	objects of
	$\Sys_2(K)$ can be regarded as vector spaces
	with a pair  of quadratic forms in the sense of Section~\ref{sec:preliminaries}, 
	and all such a pairs are obtained in this manner,
	up to isomorphism.

	Denote $\tAr{\catC}{\{1,2\}}$ as $\tAr{K}{2}$
	and write the objects $(U,V,\{f_i\}_{i=1,2})$ of $ \tAr{K}{2}$ as quadruples
	$(U,f_1,f_2,V)$.
	The assignment $(U,f_1,f_2,V)\mapsto (U,f_1,f_2,V^*)$
	and $(\phi,\psi^\op)\mapsto (\phi,\psi^*)$
	defines an equivalence between
	$\tAr{K}{2}$ and the category of \emph{Kronecker modules},
	i.e., pairs of vector spaces with a pair of linear maps from 
	the first space to the second space.
	
	If $L$ is a $K$-field, then we
	have evident base change functors $\Sys_2(K)\to \Sys_2(L)$
	and $\tAr{K}{2}\to \tAr{L}{2}$.
	It    is routine to check that the equivalence
	of Theorem~\ref{TH:equiv-syst-to-single} is compatible these functors
	(see \cite[\S3D, Remark~2.2]{Bayer_2014_hermitian_categories} for a generalization).

	Given $n\in \N$ and $\alpha\in F$,
	we define the following    $n\times n$ matrices 
	\[
	J_n(\alpha)=\left[\begin{matrix}
	\alpha & 1 &  & \\
	& \ddots & \ddots & \\
	& & \alpha & 1 \\
	& & & \alpha
	\end{matrix}\right],
	\quad
	S_n(\alpha)=\left[\begin{matrix}
	& &  & \alpha \\
	&   & \alpha & 1 \\
	  & \uddots & \uddots \\
	\alpha & 1
	\end{matrix}\right],
	\quad
	T_n =\left[\begin{matrix}
	& &    1 \\
	& \uddots &   \\
	1 &  & 
	\end{matrix}\right],
	\]
	and the    object 
	\[
	Z_n(\alpha) =
	(K^n,T_n,S_n(\alpha),K^n)\in \tAr{K}{2},
	\]
	which is isomorphic to $(K^n,I_n,J_n(\alpha),K^n)$ via $(I_n,T_n^\op)$.
	In addition, given $A\in \nMat{K}{n}$, we write
	\[
	Z(A)=(K^n,I_n,A,K^n)\in\tAr{K}{2}.
	\]
	It is easy to see that $Z(A)\cong Z(A')$
	if and only if $A$ and $A'$ are conjugate.
	Furthermore, $Z(A)$ is indecomposable
	in $\tAr{\catC}{I}$
	if and only if $K^n$ is an indecomposable
	$K[A]$-module. In this case, the characteristic
	polynomial of $A$ coincides with its minimal polynomial
	and is a prime power, and every matrix which commutes with $A$   belongs to $K[A]$.
	
	\begin{lem}\label{LM:good-decomp-for-main-II}
		Let $(K^n,\{q_i\}_{i=1,2})\in \Sys_2(K)$ 
		and let $C=F(K^n,\{q_i\}_{i=1,2})=(K^n,q_1,q_2,K^n)$ (see Theorem~\ref{TH:equiv-syst-to-single}).
		If $q_1$ is unimodular, then $C$ is a direct sum  of indecomposable
		objects of the form $Z(A)$.
	\end{lem}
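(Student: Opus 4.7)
The plan is to reduce the decomposition problem in $\tAr{K}{2}$ to the classification of finitely generated modules over $K[X]$, exploiting the fact that $q_1$ is invertible as a matrix. The strategy splits into a normalization step and a structure-theorem step.

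First I would establish that $C \cong Z(q_1^{-1}q_2)$. Write $A := q_1^{-1}q_2 \in \nMat{K}{n}$. The pair $(I_n, q_1^{\op})$ is a morphism $C \to Z(A)$ in $\tAr{K}{2}$, since the defining condition $\psi^{*}f_i' = f_i\phi$ amounts to $q_1^{\trans}\cdot I_n = q_1\cdot I_n$ (trivially, using that $q_1$ is symmetric) and $q_1^{\trans}A = q_1\cdot q_1^{-1}q_2 = q_2\cdot I_n$. Both components are invertible (since $q_1$ is unimodular), so this is an isomorphism.

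Next I would decompose $Z(A)$ by viewing $K^n$ as a $K[X]$-module with $X$ acting as $A$. Since $K[X]$ is a principal ideal domain, the structure theorem gives a decomposition $K^n = W_1 \oplus \cdots \oplus W_t$ into $K[X]$-indecomposable submodules (each isomorphic to $K[X]/(p_j(X)^{e_j})$ for some irreducible $p_j$). Choosing bases for the $W_j$ compatible with this decomposition, the matrix $A$ becomes block-diagonal, $A = A_1 \oplus \cdots \oplus A_t$ with $A_j \in \nMat{K}{n_j}$ the matrix of $A|_{W_j}$, while simultaneously $I_n = I_{n_1}\oplus\cdots\oplus I_{n_t}$. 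Since the direct sum in $\tAr{K}{2}$ is computed componentwise on all four slots, this yields
\[
Z(A) \;=\; (K^n,\,I_n,\,A,\,K^n) \;=\; \bigoplus_{j=1}^{t}(W_j,\,I_{n_j},\,A_j,\,W_j) \;=\; \bigoplus_{j=1}^{t} Z(A_j).
\]

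Finally, I would conclude that each $Z(A_j)$ is indecomposable by invoking the criterion stated in the paragraph preceding the lemma: $Z(A_j)$ is indecomposable in $\tAr{K}{2}$ iff $W_j$ is indecomposable as a $K[A_j]$-module, which holds by construction. There is no substantial obstacle here; the content of the lemma is really the translation of the module-theoretic Krull--Schmidt decomposition of $(K^n, A)$ into a direct-sum decomposition in $\tAr{K}{2}$, and the unimodularity of $q_1$ is exactly what is needed to perform this translation, by allowing one to normalize $f_1$ to $I_n$.
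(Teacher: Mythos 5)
Your proof is correct but takes a genuinely different route from the paper. The paper decomposes $C$ into indecomposables in $\tAr{K}{2}$, observes that each indecomposable summand $(U,f,g,V)$ must have $f$ invertible (since $q_1=\bigoplus f$ is), and then appeals to Kronecker's classification of Kronecker modules to conclude each such summand is isomorphic to some $Z(A)$. You instead normalize up front, showing $C\cong Z(q_1^{-1}q_2)$, and then reduce everything to the structure theorem for finitely generated torsion $K[X]$-modules, with the indecomposability of each $Z(A_j)$ handled by the paper's already-stated criterion that $Z(A)$ is indecomposable in $\tAr{K}{2}$ iff $K^n$ is indecomposable over $K[A]$. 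Your approach is more constructive and avoids citing the Kronecker classification; what the paper's phrasing buys is that it does not require exhibiting the isomorphism $C\cong Z(q_1^{-1}q_2)$ explicitly.

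One small slip: your morphism goes the wrong way. The defining condition for a morphism $(\phi,\psi^\op)$ in $\tAr{\catC}{I}$ should read $\psi^{*}f_i=f'_i\phi$ (the paper's statement $\psi^*f_i=f_i\phi$ is itself a typo), with $f_i$ the forms on the \emph{source} and $f'_i$ on the \emph{target}. With that, $(I_n,q_1^\op)$ is a morphism $Z(q_1^{-1}q_2)\to C$, not $C\to Z(q_1^{-1}q_2)$; the map the other way is $(I_n,(q_1^{-1})^\op)$. Since both components are invertible the conclusion $C\cong Z(q_1^{-1}q_2)$ stands, but the labeling should be fixed.
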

	
	\begin{proof}	
		We observed above that $\tAr{K}{2}$
		is equivalent to the category of Kronecker modules over $K$.
		The indecomposable Kronecker modules were
		classified in the 19th century by Kronecker, see \cite[p.~69]{Gabriel_1974_degenerate_bilinear_forms},
		for instance.
		If $Z=(U,f,g,V)$ is an idecomposable summand of   $C$,
		then $f$ must be invertible, in which case it follows
		easily from the classification
		that $Z\cong Z(A)$ for some $A\in\nMat{K}{n}$, $n\in\N$.
	\end{proof}
	
	\begin{lem}\label{LM:Z-properties}
		Let $Z=Z(A)$ be an idecomposable object in $\tAr{K}{2}$.
		Then:
		\begin{enumerate}[label=(\roman*)]
			\item There exists an isomorphism $h:Z\to Z^*$
			such that $h=h^*$.
			\item For every $h$ as in (i),
			the involution $f\mapsto h^{-1}f^*h:\End(Z)\to \End(Z)$
			is the identity.
			\item Every unimodular hermitian space of type $Z$ over $\tAr{K}{2}$
			is the orthogonal sum of hermitian spaces with underlying object isomorphic
			to $Z$.
		\end{enumerate}
	\end{lem}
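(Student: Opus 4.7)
The plan is to translate the hermitian-category conditions on $Z = Z(A)$ into explicit matrix conditions, and then reduce everything to classical facts about $K[A]$.

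For (i), since $\omega_{K^n} = \id$ and $*$ acts as matrix transposition on $\catC$, one checks directly that $Z^* = Z(A^\trans)$, and that a morphism $(\phi, \psi^\op) \colon Z \to Z^*$ consists of matrices $\phi, \psi \in \nMat{K}{n}$ with $\psi = \phi^\trans$ and $\phi A = A^\trans \phi$; the condition $h = h^*$ then reads $\phi = \phi^\trans$. I therefore need a symmetric invertible $\phi$ intertwining $A$ with $A^\trans$. Because $Z$ is indecomposable, $K^n$ is an indecomposable cyclic $K[A]$-module, so $K[A] \cong K[x]/(p^k)$ for some irreducible $p \in K[x]$ and some $k \in \N$. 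This ring is a commutative Frobenius $K$-algebra, hence admits a nondegenerate symmetric $K$-bilinear form $\beta$ satisfying $\beta(xu, v) = \beta(u, xv)$; the Gram matrix of $\beta$ with respect to a cyclic generator of $K^n$ provides the required $\phi$.

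For (ii), I identify $\End(Z)$ with the centralizer of $A$ in $\nMat{K}{n}$ via $(\alpha, \alpha^{\trans, \op}) \mapsto \alpha$; since $Z$ is indecomposable, this centralizer is precisely $K[A]$. A direct computation using the composition law in $\tAr{K}{2}$ with $h = (\phi, \phi^\op)$ shows that the involution $f \mapsto h^{-1} f^* h$ sends $(\alpha, \alpha^{\trans, \op})$ to $(\phi^{-1} \alpha^\trans \phi, (\phi \alpha \phi^{-1})^\op)$. Writing $\alpha = p(A)$ for a polynomial $p$, the relation $\phi A = A^\trans \phi$ propagates by iteration and $K$-linearity to $\phi p(A) = p(A^\trans) \phi$, i.e., $\phi \alpha \phi^{-1} = \alpha^\trans$. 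Hence the involution is the identity.

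For (iii), apply Theorem~\ref{TH:QSSii} with $\veps = 1$ and the form $h$ from (i). By (ii), $\sigma = \id_E$, so $\quo{\sigma} = \id$ on $\quo{E} = E/\Jac E$. Since $E \cong K[x]/(p^k)$ has maximal ideal $(p(A))$, the quotient $\quo{E} \cong K[x]/(p)$ is a field, and unimodular $1$-hermitian forms over $(\quo{E}, \id)$ are just nondegenerate symmetric $\quo{E}$-bilinear forms. In characteristic different from $2$, such a form diagonalizes as an orthogonal sum of rank-one forms; by property (iii) of Theorem~\ref{TH:QSSii}, each rank-one summand corresponds to a unimodular hermitian space in $\tAr{K}{2}$ with underlying object $Z^1 = Z$, yielding the asserted decomposition. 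The main obstacle is part (i): producing the symmetric intertwiner $\phi$. This step is classical (essentially equivalent to the Taussky--Zassenhaus result that every matrix over $K$ is similar to a symmetric one), but it does require an honest input such as the Frobenius structure of $K[x]/(p^k)$.
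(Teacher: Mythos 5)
Your proof is correct and follows essentially the same plan as the paper: translate the hermitian-category data on $Z(A)$ into matrix conditions, reduce (i) to finding a symmetric invertible $\phi$ with $\phi A = A^\trans\phi$, show in (ii) that the induced involution on $\End(Z)\cong K[A]$ is trivial by pushing $\phi$ through polynomials in $A$, and in (iii) pass via Theorem~\ref{TH:QSSii} to $(\quo{E},\id)$ and diagonalize. The one substantive difference is in (i): the paper simply cites Kaplansky (Theorem 66) for the existence of a symmetric intertwiner between $A$ and $A^\trans$, whereas you give a self-contained proof using the Frobenius-algebra structure of $K[x]/(p^k)$ (taking $\beta(u,v)=\lambda(uv)$ for a suitable linear functional $\lambda$ and transporting its Gram matrix along the cyclic isomorphism $K^n\cong K[x]/(p^k)$). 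This buys you a fully explicit argument at the cost of a few extra lines; the paper's citation is shorter but less transparent. In (iii) you also identify $\quo{E}\cong K[x]/(p)$ explicitly rather than invoking Fitting's lemma for locality, which is a harmless and slightly more concrete variant of the same step.
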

	
	\begin{proof}
		(i)
		Saying   that $h=(\phi,\phi^\op):Z\to Z^*$
		is an isomorphism
		amounts to saying that $\phi$ is a symmetric
		invertible $n\times n $ matrix 
		such that $\phi A\phi^{-1}=A^\trans$. 
		It is well-known that such $\phi$ exists, 
		see \cite[Theorem~66]{Kaplansky_2003_linear_algebra_geometry_reprint},
		for instance.
				
		(ii)
		Write   $h=(\phi,\phi^\op)$
		with $\phi\in\nGL{K}{n}$.
		Then $\phi=\phi^\trans$ and
		$\phi^\trans A =A^\trans  \phi$.
		Let    $f=(\xi,\psi^\op)\in \End(Z)$. Then  
		$\psi^\trans=\xi$
		and $\psi^\trans A=A\xi$,
		hence
		$\xi A=A\xi $.
		Since $K^n$ is an indecomposable
		$K[A]$-module,
		$\xi=p(A)$ for some $p\in K[X]$,
		so $\phi^{-1} \xi\phi=p(\phi^{-1} A^\trans \phi)=p(A)=\xi$.
		This implies readily that $h^{-1}f^*h=f$.
		
		(iii) Write $E=\End(Z)$. By (i) and Theorem~\ref{TH:QSSii},
		we reduce into showing that
		every unimodular $1$-hermitian form over $(\quo{E},\quo{\sigma})$
		is diagonalizable. 
		By Fitting's lemma, $\quo{E}$ is a field,
		and by (ii), $\quo{\sigma}=\id$.
		Since every
		quadratic form over a field of characteristic not $2$ is diagonalizable, we are done.
	\end{proof}

	\begin{lem}\label{LM:Z-over-R-structure}
		Let $P$ be an ordering of $K$
		and let $R=K_P$.
		Let $A\in \nMat{K}{n}$
		be a matrix such that $K^n$ is an indecomposable
		$K[A]$-module, let $f$ denote its characteristic
		polynomial and let $\alpha_1,\dots,\alpha_t$ be
		the roots of $f$ in $R$.
		Let $h:Z(A)\to Z(A)$ be a unimodular hermitian form.
		Then there exist $r\in \N$ and  a decomposition
		\[
		(Z(A),h)_R=(Z_0 ,h_0)\oplus \bigoplus_{i=1}^t (Z_r(\alpha_i),h_i)
		\]
		in $\Herm{\tAr{R}{2}}$ such that $h_0\oplus h_0$ is hyperbolic. 
	\end{lem}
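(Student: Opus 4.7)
The plan is to decompose the base-changed object $Z(A)_R$ via the primary decomposition of $R^n$ as an $R[A]$-module, then apply Theorem~\ref{TH:QSSi} to lift this to an orthogonal decomposition of $(Z(A),h)_R$, and finally analyze the non-real primary summands using Theorem~\ref{TH:QSSii} over the algebraic closure of $R$.

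First, since $K^n$ is indecomposable as a $K[A]$-module, the structure theorem over the PID $K[X]$ gives $K^n\cong K[X]/(p^r)$ for a unique monic irreducible $p\in K[X]$ with $n=r\deg p$, so $f=p^r$. Over $R$, factor $p=\prod_{i=1}^{t}(X-\alpha_i)\cdot\prod_{j=1}^{s}q_j$, with the $q_j$ monic irreducible quadratics; these linear and quadratic factors are pairwise distinct because $p$ is separable. The Chinese remainder theorem yields
\[
R^n \;\cong\; R[X]/(p^r) \;\cong\; \prod_{i=1}^{t}R[X]/(X-\alpha_i)^r \;\times\; \prod_{j=1}^{s}R[X]/q_j^r
\]
as $R[A]$-modules. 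Through the Kronecker-module description of $\tAr{R}{2}$, the $i$-th linear factor contributes a summand isomorphic to $Z_r(\alpha_i)$, and the $j$-th quadratic factor contributes an indecomposable $Y_j\in\tAr{R}{2}$ of dimension $2r$, which is self-dual by Lemma~\ref{LM:Z-properties}(i).

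Next, I apply Theorem~\ref{TH:QSSi} to $(Z(A),h)_R$, using a collection of representatives $\catZ$ containing $Z_r(\alpha_1),\dots,Z_r(\alpha_t),Y_1,\dots,Y_s$. Since these indecomposables are pairwise non-isomorphic and each appears with multiplicity exactly one in $Z(A)_R$, Krull--Schmidt together with Theorem~\ref{TH:QSSi} forces
\[
(Z(A),h)_R \;\cong\; \bigoplus_{i=1}^{t}(Z_r(\alpha_i),h_i) \;\oplus\; \bigoplus_{j=1}^{s}(Y_j,h'_j).
\]
Setting $(Z_0,h_0):=\bigoplus_{j=1}^{s}(Y_j,h'_j)$ matches the decomposition claimed in the lemma, and the remaining task is to show each $h'_j\oplus h'_j$ is hyperbolic.

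This is the main point of the argument. Because $Y_j$ is indecomposable with minimal polynomial $q_j^r$, one has $\End(Y_j)\cong R[X]/q_j^r$, a local ring with residue field $\quo{E_j}:=R[X]/q_j\cong R[\sqrt{-1}]$, which is algebraically closed. By Lemma~\ref{LM:Z-properties}(ii), the involution on $\End(Y_j)$ induced by any unimodular hermitian form is the identity, hence Theorem~\ref{TH:QSSii} provides a bijection $T$ between isometry classes of unimodular $1$-hermitian forms of type $Y_j$ and unimodular quadratic forms over the field $\quo{E_j}$. Under $T$, the form $h'_j$ corresponds to a rank-$1$ form $\langle\beta_j\rangle$; thus $h'_j\oplus h'_j$ corresponds to $\langle\beta_j,\beta_j\rangle$, which equals $\langle\beta_j,-\beta_j\rangle$ (a hyperbolic plane) because $-1$ is a square in $\quo{E_j}$. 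Since $T$ preserves hyperbolicity, $h'_j\oplus h'_j$ is hyperbolic. The main obstacle is the bookkeeping in the second paragraph that ensures Theorem~\ref{TH:QSSi} splits along exactly the primary components with multiplicity one; once that is in place, the rest is routine given separability of $p$ and the algebraic closure of $\quo{E_j}$.
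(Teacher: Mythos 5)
Your proof is correct and follows essentially the same route as the paper: decompose $Z(A)_R$ into indecomposables $Z_r(\alpha_i)$ and $Z(B_j)$ according to the factorization of $p$ over $R$, apply Theorem~\ref{TH:QSSi} to split $h_R$ along these types, and then use Lemma~\ref{LM:Z-properties} and Theorem~\ref{TH:QSSii} to transfer the non-real summands $h'_j$ to rank-one quadratic forms over the algebraically closed field $R[X]/(q_j)$, where doubling is hyperbolic. Your observation $\langle\beta_j,\beta_j\rangle\cong\langle\beta_j,-\beta_j\rangle$ via $-1$ being a square is just a slightly more explicit rendering of the paper's remark that every two-dimensional unimodular form over an algebraically closed field is hyperbolic.
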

	
	\begin{proof}
		There is a monic prime polynomial $p\in K[X]$
		and $r\in \N$
		such that $f=p^r$. Since $R$ is real-closed,
		there are distinct prime degree-$2$
		polynomials
		$q_1,\dots,q_s\in R[X]$
		such that $p =(X-\alpha_1)\cdots(X-\alpha_t)q_1\cdots q_s$.
		For each $j\in\{1,\dots,s\}$,
		choose $B_j\in \nMat{R}{2r}$ with minimal polynomial
		$q_j^r$.
		Then $A$ is conjugate
		to $J_{r }(\alpha_1)\oplus\dots\oplus J_{r }(\alpha_t)\oplus B_1\oplus \dots\oplus B_s$,
		and hence $Z(A)_R\cong Z_r(\alpha_1)\oplus \dots\oplus Z_r(\alpha_t)\oplus
		Z(B_1)\oplus\dots\oplus Z(B_s)$.
		By Lemma~\ref{LM:Z-properties}(i), each of
		the objects $Z_r(\alpha_1),\dots,Z_r(\alpha_t),Z(B_1),\dots,Z(B_s)$
		is isomorphic to its dual.
		Furthermore, these objects
		are pairwise non-isomorphic because  $J_{r }(\alpha_1),\dots, J_{r }(\alpha_t), B_1, \dots, B_s$
		are pairwise non-conjugate.
		Now, by 
		Theorem~\ref{TH:QSSi}, $(Z(A)_R,h_R)$
		factors as
		$[\bigoplus_{i=1}^t (Z_r(\alpha_i),h_i)]\oplus[\bigoplus_{j=1}^s (Z(B_j),h'_j)]$.
		We take $(Z_0,h_0)=\bigoplus_{j=1}^s (Z(B_j),h'_j)$.
		
		To complete the proof, we need to check that $h'_j\oplus h'_j$ is hyperbolic for
		all $j$.
		Write $E=\End(Z(B_j))$. It is easy
		to see that the assignment $(\phi,\psi^\op)\mapsto \phi$
		defines a $R$-algebra isomorphism between
		$E$
		and the centralizer of $B_j$ in $\nMat{R}{2r}$,
		which is just $R[B_j]$.
		Thus, $E\cong R[X]/(q_j^s)$.
		By Theorem~\ref{TH:QSSii} and Lemma~\ref{LM:Z-properties}, it
		is enough to
		show 
		that every $2$-dimensional unimodular quadratic form over $\quo{E}:=E/\Jac E\cong R[X]/(q_j)$
		is hyperbolic. This holds because  $\quo{E}$ is a quadratic extension of the
		real-closed field $R$, and hence algebraically closed.
	\end{proof}
	
	\begin{lem}\label{LM:quad-forms-on-Z-real-case}
		Assume that $K$ is real closed,
		let $\alpha\in K$,    
		let $h=(\phi,\phi^\op):Z_n(\alpha)\to Z_n(\alpha)^*$ be a unimodular
		hermitian form and write  
		$(K^n,\{q_i\}_{i=1,2})=G(Z,h)$ (see Theorem~\ref{TH:equiv-syst-to-single}).
		Then:
		\begin{enumerate}[label=(\roman*)]
			\item There exist   $\beta_1\in\units{K}$
			and $\beta_2,\dots,\beta_n\in K$
			such that
			\[
		\phi=\left[\begin{matrix}
		\beta_1 & \beta_2 & \cdots  & \beta_n \\
		& \ddots & \ddots & \vdots \\
		& & \beta_1 &  \beta_2\\
		& & & \beta_1
		\end{matrix}\right].
		\]
			\item $(Z,h)\cong (Z,(I_n,I_n^\op))$
		if $\beta_1>0$ and $(Z,h)\cong (Z,(-I_n,-I_n^\op))$
		if $\beta_1<0$.
		\end{enumerate}
	\end{lem}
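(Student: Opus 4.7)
The plan is a direct computation, which I would break into three stages: parameterize all hermitian forms on $Z_n(\alpha)$ (giving (i)), parameterize the endomorphisms of $Z_n(\alpha)$ and use them to set up the isometry equation (half of (ii)), and finally solve that equation inside the upper-triangular Toeplitz algebra (the other half of (ii)).

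For (i), note first that $T_n$ and $S_n(\alpha)$ are symmetric, so $Z_n(\alpha)^* = Z_n(\alpha)$, and the condition $h = h^*$ forces the two components of $h = (\phi, \psi^\op)$ to coincide, so $h = (\phi, \phi^\op)$. The morphism conditions in $\tAr{K}{2}$ then read
\[
\phi^\trans T_n = T_n \phi \qquad\text{and}\qquad \phi^\trans S_n(\alpha) = S_n(\alpha)\phi.
\]
Using $T_n^2 = I_n$ and the identity $T_n S_n(\alpha) = J_n(\alpha)$ (which is exactly the content of the isomorphism $Z_n(\alpha) \cong (K^n, I_n, J_n(\alpha), K^n)$ via $(I_n, T_n^\op)$ quoted in Section~\ref{sec:pairs}), the first equation is equivalent to $\phi^\trans = T_n \phi T_n$, and substituting into the second reduces it to $\phi J_n(\alpha) = J_n(\alpha)\phi$. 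Since $K^n$ is a cyclic $K[J_n(\alpha)]$-module, the centralizer of $J_n(\alpha)$ in $\nMat{K}{n}$ is $K[J_n(\alpha)]$, which is precisely the space of upper-triangular Toeplitz matrices of the shape displayed in the lemma. A direct computation then verifies that for any such $\phi$ the product $T_n \phi$ is a Hankel matrix — its $(i,j)$-entry depends only on $i+j$ — hence symmetric, so the first equation is automatic. Unimodularity amounts to $\det \phi = \beta_1^n \neq 0$, giving $\beta_1 \in \units{K}$.

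For (ii), I would look for an isometry $g = (\xi, \eta^\op) : (Z_n(\alpha), (\veps I_n, \veps I_n^\op)) \to (Z_n(\alpha), h)$ with $\veps \in \{\pm 1\}$. Being a morphism in $\tAr{K}{2}$, $g$ satisfies $\eta^\trans T_n = T_n \xi$ and $\eta^\trans S_n(\alpha) = S_n(\alpha)\xi$; the same analysis as in (i) forces $\xi$ to be upper-triangular Toeplitz and $\eta = T_n \xi^\trans T_n$, which for such $\xi$ simplifies back to $\eta = \xi$. A brief application of the composition rule in $\tAr{K}{2}$ yields $g^* (\veps I_n, \veps I_n^\op) g = (\veps \xi^2, (\veps \xi^2)^\op)$, so the isometry requirement becomes $\veps \xi^2 = \phi$.

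Thus the problem reduces to finding a square root of $\pm \phi$ inside the commutative subring $R$ of upper-triangular Toeplitz matrices, which is isomorphic to $K[X]/(X^n)$ via shift $\mapsto X$; under this isomorphism $\phi$ corresponds to $\beta_1 + \beta_2 X + \dots + \beta_n X^{n-1}$. Writing this as $\beta_1 \cdot (1 + u)$ with $u \in (X)$, the truncated binomial series $\sum_{k \geq 0} \binom{1/2}{k} u^k$ provides a square root of $1 + u$ in $R$ (it terminates because $u^n = 0$, and its coefficients lie in $K$ because $\Char K \neq 2$). Therefore $\phi$ admits a Toeplitz square root whenever $\beta_1$ is a square in $K$; since $K$ is real closed this holds for $\beta_1 > 0$, and for $\beta_1 < 0$ we apply the same argument to $-\phi$ together with $\veps = -1$. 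The main obstacle I anticipate is spotting the Hankel identity in (i) — without that observation, the automatic vanishing of the first morphism condition would require a messier book-keeping — after which everything reduces to the classical existence of formal square roots in $K[X]/(X^n)$.
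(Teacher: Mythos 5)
Your argument is correct and follows the paper's proof essentially verbatim: both prove (i) by extracting $\phi J_n(\alpha)=J_n(\alpha)\phi$ from the two morphism conditions and invoking indecomposability to land in $K[J_n(\alpha)]$, and both prove (ii) by producing a Toeplitz $\eta$ with $\sgn(\beta_1)\eta^2=\phi$; you merely spell out what the paper leaves as ``easy to see'' and ``routine to check'' (the Hankel/symmetry observation and the truncated binomial-series square root in $K[X]/(X^n)$). The only blemish is cosmetic: you declare $g:(Z,(\veps I_n,\veps I_n^{\op}))\to(Z,h)$ but then write the pullback equation for the opposite direction, $g^*hg=(\veps I_n,\veps I_n^{\op})$; the resulting equation $\veps\xi^2=\phi$ and the rest of the argument are nonetheless correct.
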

	
	\begin{proof}
		(i) We have $\phi^\trans T_n=T_n\phi$
		and $\phi^\trans S_n(\alpha)=S_n(\alpha) \phi$,
		hence $J_n(\alpha)\phi=T_n^{-1}  S_n(\alpha) \phi=
		T_n^{-1} \phi^\trans S_n(\alpha)=\phi T_n^{-1} S_n(\alpha) =\phi J_n(\alpha)$,
		so $\phi$ commutes with $J_n(\alpha)$.
		Since $K^n$ is an indecomposable $K[J_n(\alpha)]$-module,
		this means that $\phi$ is a polynomial in $J_n(\alpha)$, and thus has
		the desired form.
		
		(ii) It is easy to see that there exists 
		$\eta\in \nGL{K}{n}$, taking the same form as $\phi$
		such that $\sgn(\beta_1)\eta^2=\phi$.
		It is routine to check
		that $\eta^\trans T_n=T_n\eta$ and $\eta^\trans S_n(\alpha)=S_n(\alpha)\eta$,
		from which it follows that $(\eta,\eta^\op)$
		is an isometry
		from $(Z_n(\alpha),(\phi,\phi^\op))$ to $(Z_n(\alpha),(\sgn(\beta_1)I_n,\sgn(\beta_1)I_n^\op))$.
	\end{proof}
	
	\begin{lem}\label{LM:J-properties}
		Let $(K^n,\{q_i\}_{i=1,2})\in\Sys_2(K)$ 
		and assume that $q_1$ is unimodular.
		Let $J=q_1^{-1}q_2$ and   $Z= (K^n,q_1,q_2,K^n)$. Then:
		\begin{enumerate}[label=(\roman*)]
			\item $(J,J^\op)$ is a central element of $\End (Z)$.
			\item For every $f\in K[X]$, the morphism
			$ q_1 f(J):K^n\to (K^n)^*$ in $\catC$ defines a quadratic form   in $\Cad \{q_1,q_2\} $.
			\item Continuing (ii), if $\{q_i\}_{i=1,2}$ is hyperbolic,
			then so is $q_1 f(J)$.
		\end{enumerate}
	\end{lem}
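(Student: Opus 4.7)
The plan is to prove (i) by direct matrix computation, derive (ii) as a consequence of (i) plus a symmetry check, and prove (iii) by observing that the subspaces in a hyperbolic decomposition of $\{q_1, q_2\}$ are automatically $J$-invariant.

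For (i), using that $q_1, q_2$ are symmetric and $q_2 = q_1 J$, one has $J^\trans q_1 = (q_1 J)^\trans = q_2$ and $J^\trans q_2 = q_2 q_1^{-1} q_2 = q_2 J$, so $(J, J^\op) \in \End(Z)$. For centrality, let $(\phi, \psi^\op) \in \End(Z)$. The relation $\psi^\trans q_1 = q_1 \phi$ gives $\psi^\trans = q_1 \phi q_1^{-1}$; substituting into $\psi^\trans q_2 = q_2 \phi$ and using $q_2 = q_1 J$ yields $q_1 \phi J = q_1 J \phi$, hence $\phi J = J \phi$. The identity $J \psi = \psi J$ follows by transposing the defining relations and running the same argument.

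For (ii), the identity $q_1 J = J^\trans q_1$ from (i) gives by induction $q_1 J^k = (J^\trans)^k q_1$ and hence $q_1 f(J) = f(J^\trans) q_1$; transposing shows that $q_1 f(J)$ is symmetric and therefore defines a quadratic form. The adjoint condition $\psi^\trans (q_1 f(J)) = (q_1 f(J)) \phi$ for every $(\phi, \psi^\op) \in A(\{q_1, q_2\}) = \End(Z)$ then follows immediately from (i): $\psi^\trans q_1 f(J) = q_1 \phi f(J) = q_1 f(J) \phi$, where the last equality uses that $\phi$ commutes with $J$, hence with $f(J)$.

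For (iii), suppose $K^n = U \oplus W$ with $q_1, q_2$ vanishing on both. Unimodularity of $q_1$ together with isotropy of $U$ and $W$ forces $\dim U = \dim W = n/2$, $q_1(U) = U^\perp$ and $q_1(W) = W^\perp$, where $U^\perp \subseteq (K^n)^*$ denotes the annihilator of $U$. Then $q_2(U) \subseteq U^\perp = q_1(U)$, and applying $q_1^{-1}$ gives $JU \subseteq U$; analogously $JW \subseteq W$. Consequently $f(J)$ preserves both $U$ and $W$, and since $q_1$ already vanishes on each, so does $q_1 f(J)$: for $x, y \in U$ one has $y^\trans q_1 f(J) x = 0$ because $f(J) x \in U$. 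Thus $U \oplus W$ witnesses hyperbolicity of $q_1 f(J)$. The main conceptual step is (i)—recognizing that the two adjoint equations together encode exactly the commutativity of $\phi$ (and $\psi$) with $J$—after which (ii) and (iii) follow essentially mechanically.
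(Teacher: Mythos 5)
Your proof is correct. Parts (i) and (ii) run essentially the same matrix computations as the paper, with one welcome addition: the paper does not explicitly verify that $q_1 f(J)$ is a \emph{symmetric} matrix (it must be, since $\Cad\{q_1,q_2\}$ is by definition a space of quadratic forms), whereas your identity $q_1 f(J)=f(J^\trans)q_1$ supplies exactly that check. For part (iii), you take a genuinely more elementary route. The paper cites claim (ii) in the proof of Theorem~\ref{TH:main-I}: a hyperbolic decomposition $K^n=U\oplus W$ produces a self-dual idempotent $e=(\id_U\oplus 0_W,(0_U\oplus\id_W)^\op)\in A$, and \emph{any} $q\in\Cad\{q_1,q_2\}$ satisfies the adjoint condition with $e$ and therefore vanishes on $U$ and $W$. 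Your argument instead shows directly that the subspaces $U$ and $W$ are $J$-invariant (using $q_1(U)=U^\perp$ and $q_2(U)\subseteq U^\perp$, which holds since $q_1$ is unimodular and both forms vanish on $U$), so $f(J)$ preserves $U$ and $W$ and $q_1 f(J)$ vanishes on them. Your version is more concrete and self-contained but is tailored to forms of the special shape $q_1 f(J)$; the paper's argument, while more abstract, establishes the stronger fact that \emph{every} element of $\Cad\{q_1,q_2\}$ inherits the hyperbolic splitting. Both are sound.
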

	
	\begin{proof}
		(i) One readily checks
		that $J^\trans q_1=q_1 J$
		and $J^\trans q_2=q_2 J$,
		hence $(J,J^\op)\in \End (Z)$.
		Let $(\phi,\psi^\op)\in \End_{\catC}(Z)$.
		Then $\psi^\trans q_1=q_1\phi$
		and $\psi^\trans q_2=q_2\phi$,
		from which it follows
		that
		$\phi J=\phi q_1^{-1}q_2=q_1^{-1}\psi^\trans q_2=q_1^{-1} q_2\phi=J\phi$
		and $\psi J=\psi q_1^{-1}q_2=q_1^{-1}\phi^\trans q_2=q_1^{-1}q_2\psi=J\psi$,
		so $(\phi,\psi^\op)$ and $(J,J^\op)$ commute.

		(ii) 
		Let $(\phi,\psi^\op)\in A(\{q_i\}_{i=1,2})=\End(Z)$
		(see Remark~\ref{RM:adjoint-ring-desc}).
		We need to check that
		$\psi^\trans q_1 f(J)=q_1 f(J) \phi$.
		We observed in the proof of (i) that $\phi J=J\phi$,
		so  
		$\psi^\trans q_1 f(J)=q_1 \phi f(J)=q_1 f(J) \phi$.

		(iii) This follows from (ii) and claim (ii) in the proof of Theorem~\ref{TH:main-I}.
	\end{proof}

	We are now ready to prove Theorem~\ref{TH:answer-to-Qtwo}, thus establishing 
	Theorem~\ref{TH:main-II}.

	\begin{proof}[Proof of Theorem~\ref{TH:answer-to-Qtwo}]
		By Theorem~\ref{TH:main-I}, the statement
		is vacuous if $K$ is not real. We may therefore assume
		that $K$ is real, and in particular infinite.	
		Now, by Lemma~\ref{LM:nonsingular-systems},
		$\Span_K\{q_1,q_2\}$ contains a unimodular form.
		Replacing $\{q_i\}_{i=1,2}$ with another pair spanning   
		$\Span_K\{q_1,q_2\}$, we may assume that $q_1$ is unimodular.
		The implication (a)$\implies$(b) 
		was explained in the introduction, so we turn to prove the converse.
		In fact, by Theorem~\ref{TH:main-I}, it is enough
		to check that for every ordering $P$ of $K$,
		the system $2\times \{(q_i)_{K_P}\}_{i=1,2}$ is hyperbolic.

		We may assume that $V=K^v$ for some $v$; recall that we treat  $q_1,q_2$
		as symmetric $v\times v$ matrices.
		Write $R=K_P$, $J=q_1^{-1}q_2$, $Z=(K^v,q_1,q_2,K^v)$, $h=(I_v,I_v^\op):Z\to Z^*$
		and note that $(Z,h)=F(V,\{q_i\}_{i=1,2})$ (see Theorem~\ref{TH:equiv-syst-to-single}).
		Let $\calP\subseteq K[X]$ denote the monic prime factors
		of the characteristic polynomial of $J$.
		For every $p\in \calP$ and $n\in\N$, let $A_{p,n}$ denote
		a fixed square matrix over $K$ with characteristic
		polynomial $p^n$. Then $Z(A_{p,n})\in \tAr{K}{2}$
		is indecomposable and isomorphic
		to its dual (Lemma~\ref{LM:Z-properties}(i)).

		Note that if $Z\cong (U,f,g,V)$ in $\tAr{K}{2}$,
		then $f^{-1}g$ is conjugate to $J$.
		Thus,  
		by Theorem~\ref{TH:QSSi} and Lemma~\ref{LM:good-decomp-for-main-II},
		we have a decomposition
		\begin{align}\label{EQ:decI}
		(Z,h)\cong \bigoplus_{p\in\calP}\bigoplus_{n \in\{1,\dots,  N\}} (Z_{p,n},h_{p,n}),
		\end{align}
		where $N$ is a sufficiently
		large integer and $Z_{p,n}$ is a direct sum of
		copies of $Z(A_{p,n})$.

		Fix some $p\in\calP$. We will prove that $2\times (h_{p,n})_R$ is hyperbolic for all
		$p$ and $n$ by a decreasing inducting on $n$.
		Thanks to Theorem~\ref{TH:equiv-syst-to-single}, this will finish the proof.
		The case     $n=N+1$ holds vacuously, so
		assume that the claim has been established for all $n>k$ for some $k\leq N$.

		Write $W=Z(A_{p,k})$  and
		let $\alpha_1,\dots,\alpha_t$ denote the roots of $p$ in $R$.
		Thanks to Lemma~\ref{LM:Z-properties}(iii),
		there is a decomposition
		\begin{align}\label{EQ:decIII}
		(Z_{p,k},h_{p,k})=\bigoplus_{j=1}^s (W,w_j),
		\end{align}
		and by Lemma~\ref{LM:Z-over-R-structure},
		we further have
		\begin{align}\label{EQ:decIV}
		(W,w_j)_R\cong   (W'_j,w'_j)\oplus \bigoplus_{i=1}^t (Z_{k}(\alpha_i),w_{ji})
		\end{align}
		with $2\times w'_j$ being hyperbolic.
		
		Fixing $i\in\{1,\dots,t\}$ and $j\in\{1,\dots,s\}$ and writing $w_{ji}=(\phi_{ji},\phi_{ji}^\op)$,
		Lemma~\ref{LM:quad-forms-on-Z-real-case}(i) implies
		that $\phi_{ji}$ is an upper-triangular matrix with constant diagonal;
		denote the scalar occurring on the diagonal of $\phi_{ji}$  by $\beta_{ji}$.
		Write
		\[r_i=\#\{j\in \{1,\dots,s\}\suchthat \beta_{ji}>0\}
		\qquad\text{and}
		\qquad
		r'_i=\#\{j\in \{1,\dots,s\}\suchthat \beta_{ji}<0\} .\]
		Then, by Lemma~\ref{LM:quad-forms-on-Z-real-case}(ii),
		\[
		(Z_{p,k},h_{p,k})_R\cong \bigoplus_{j=1}^s  
		(W'_j,w'_j)\oplus \bigoplus_{i=1}^t \bigg(Z_{k}(\alpha_i)^{r_i +r'_i},
		r_i\times (I_k,I_k^\op)\oplus r'_i\times (-I_k,-I_k^\op)\bigg).
		\]
		As a result, if $r_i=r'_i$ for all $i\in \{1,\dots,t\}$,
		then $2\times (h_{p,k})_R$ is hyperbolic.

		Let $g $ denote the product of all primes in $\calP-\{p\}$,
		and let $m\in \{0,\dots,t\}$.
		By virtue of condition \ref{item:condition-on-K},
		we can find 
		a polynomial $f_m\in K[X]$ such that for all $i\in\{1,\dots,t\}$,
		\[
		\veps_{m,i}:=f_m(\alpha_i)g^N(\alpha_i)\prod_{\ell\neq i}(\alpha_i-\alpha_\ell)<0 \quad
		\iff \quad m=i.
		\]		
		Let 
		$z_m=q_1p^{k-1}(J)g^N(J)f_m(J)$.
		By Lemma~\ref{LM:J-properties}(ii),
		$z_0,\dots,z_t$ are quadratic forms in $\Cad \{q_1,q_2\} $, so  they all have
		$P$-signature $0$ by assumption.
		
		Observe that $(V,\{q_i\}_{i=1,2})=GF(V,\{q_i\}_{i=1,2})=G(Z,h)$ (see Theorem~\ref{TH:equiv-syst-to-single}). 
		Since $G$ respects orthogonal sums,
		each of the morphisms $q_1,q_2, J,z_0,\dots,z_t$ in $\catC$ factors as a direct sum of components
		corresponding to the decomposition \eqref{EQ:decI}.
		Given $(q,n)\in \calP\times\{1,\dots,N\}$,
		write $z_{m,q,n}$
		for the component of $z_m$ corresponding to $Z_{q,n}$.
		By the definition of $G$ in Theorem~\ref{TH:equiv-syst-to-single},
		we have
		$z_{m,p,k}=(I ^\trans I )\cdot p^{k-1}(I ^{-1}A_{q,n})\cdot
		g^{N }(I ^{-1}A_{q,n})\cdot f(I^{-1} A_{q,n})=
		p^{k-1}( A_{q,n}) 
		g^{N }( A_{q,n})f( A_{q,n})$.
		From this we see that $z_{m,q,n}=0$ if $n<k$ or $q\neq p$.
		In addition, Lemma~\ref{LM:J-properties}(iii) and the induction
		hypothesis imply that $2\times z_{m,p,n}$ is hyperbolic
		whenever $n>k$.
		As a result, $\sgn_P z_{m,p,k}=\sgn_P z_m=0$.

		Write $b_m=(z_{m,p,k})_R$. Then $b_m $ isomorphic
		to an orthogonal sum of components corresponding to
		the decompositions~\eqref{EQ:decIII} and~\eqref{EQ:decIV};
		write $b_{m,j1},\dots,b_{m,jt}$  and $ b'_{m,j}$  for the components
		corresponding to $w_{j1},\dots,w_{jt}$ and $ w'_j$,
		respectively.
		As in the previous paragraph,
		$b'_{m,j}$ is hyperbolic,
		and  we have
		\begin{align*}
		b_{m,ji}&= \phi_{ji}^\trans 	T_n  \cdot f_m(T_k^{-1}S_k(\alpha_i)) \cdot 
		g^N(T_k^{-1}S_n(\alpha_i))\cdot p^{k-1}(T_k^{-1}S_k(\alpha_i))\\
		&=
		\phi_{ji}^\trans 	T_n  \cdot f_m(J_n(\alpha_i)) \cdot 
		g^N(J_k(\alpha_i))\cdot p^{k-1}(J_k(\alpha_i))=
		\begin{bmatrix}
		\ddots & & \vdots \\
		& 0 & 0 \\
		\cdots & 0 &  \beta_{ji}\veps_{m,i}
		\end{bmatrix}
		\end{align*}
		As a result, for $m\in \{1,\dots,t\}$, we have
		\[
		0=\sgn_Pz_{m,p,k}= \sgn b_m = \sum_{i,j} \sgn(\beta_{ji}\veps_{m,i})=-(r_m-r'_m)
		+\sum_{i\neq m} (r_i-r'_i),
		\]
		whereas for $m=0$, we get
		\[
		0=\sgn_Pz_{0,p,k}=\sgn b_0 = \sum_{i,j} \sgn(\beta_{ji}\veps_{0,i})= \sum_{i } (r_i-r'_i).
		\]
		Subtracting both equations gives $2(r_m-r'_m)=0$, so $r_m=r'_m$
		for all $m\in \{1,\dots,t\}$. This completes the proof.
	\end{proof}

\bibliographystyle{plain}
\bibliography{MyBib_18_05}

\end{document}